\title{Complete Calabi-Yau metrics on noncompact abelian fibered threefolds}
\author[R. Liang]{Ruiming Liang}
\address[R. Liang]{Peking University}
\email{2201110017@stu.pku.edu.cn}
\author[Y. Zhang]{Yang Zhang}
\address[Y. Zhang]{Nanjing University}
\email{502022210052@smail.nju.edu.cn}
\date{}
\begin{document}

\onehalfspacing

	\newtheorem{definition}{Definition}
        \newtheorem{theorem}{Theorem}
	\newtheorem{lemma}{Lemma}
	\newtheorem{proposition}{Proposition}
	\newtheorem{remark}{Remark}
	\newtheorem{example}{Example}
        \newtheorem{corollary}{Corollary}

\begin{abstract}

In this article, we construct complete Calabi-Yau metrics on certain type of abelian fibrations $X$ over $\mathbb{C}$. We also provide compactification for $X$ so that the compactified variety has negative canonical bundle.

\end{abstract}
\maketitle

\section{Introduction}

Following Yau's seminal work on the Ricci curvature \cite{yau1978ricci}, the study of Calabi-Yau metrics has become a central topic in Kähler geometry. A significant challenge in this field is to construct complete Calabi-Yau metrics on non-compact Kähler manifolds, a topic that was initiated by Tian and Yau in their paper \cite{tian1990complete}. Their approach involved modeling these metrics on a Fano manifold with its anti-canonical divisor removed.

Following their work, Hein introduced a novel method for constructing such metrics on rational elliptic surfaces, which are not Fano, by removing certain singular fibers \cite{hein2010gravitational}. In these cases, the fibration structure plays a critical role in formulating an explicit ansatz for the metrics. 

In Hein's paper, he mainly focused on the space of a rational elliptic surface $X$ with a special fiber $F$ removed. Recall that rational elliptic surfaces are characterized as smooth surfaces $X$ with relatively minimal elliptic fibrations over $\mathbb{P}^1$ and $K_X=-[F]$. It is clear that $X\backslash F$ has a global holomorphic volume form $\Omega$ with a simple pole along the special fiber $F$. However, it should be pointed out that the condition $K_X=-[F]$ is essential and is closely related to Kodaira's smooth relatively minimal model \cite{kodaira1963compact} and Hein's construction also relies heavily on this condition.

Inspired by Hein's work, we aim to extend this framework to abelian fibrations. In the elliptic fibration case, by Kodaira's canonical bundle formula and Noether formula, we can explicitly figure out the relation between singular fibers and canonical bundle. With some routine argument it is not difficult to see that rational elliptic surfaces are the only plausible playground to construct nontrivial complete Calabi-Yau metrics. However, in the higher dimension case, there is no such explicit numerical formula, so looking for an appropriate geometric model is of some subtlety.

The problem divides into two parts:
\begin{itemize}

\item Construct an abelian fibration over an open Riemann surface and find a noncompact Calabi-Yau metric on it;

\item Compactify the abelian fibration into a projective variety with negative canonical bundle.

\end{itemize}


\begin{definition}[Abelian fibered threefold]

By an abelian fibered threefold, we mean a normal threefold $X$ equipped with a surjective morphism $f:X\to S$ to a Riemann surface $S$ whose general fiber is an abelian surface.

\end{definition}

One of the primary challenges in higher-dimensional geometry arises from the complexities introduced by minimal models.

\begin{definition}[Relatively minimal model]

We call an abelian fibered threefold $f:X\to S$ to be relatively minimal if

(1) $X$ has only $\mathbb{Q}-$factorial terminal singularities;

(2) The canonical Weil divisor $K_X$ is relatively nef, that is, for any curve $C$ in a fiber $X_s$, we have $K_X\cdot C\geq 0$.
  
\end{definition}

\begin{lemma}[Lemma 1.3 in \cite{oguiso1997note}]

Let $f:X\to S$ be a relatively minimal Abelian fibered threefold over a Riemann surface $S$. 
Then there exists a $\mathbb{Q}-$divisor $D$ on $S$ such that $K_X\sim_{\mathbb{Q}} f^*(K_S+D)$.

\end{lemma}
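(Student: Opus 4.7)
The plan is to mimic Kodaira's derivation of the canonical bundle formula for elliptic surfaces. Since the generic fiber $X_\eta$ is an abelian surface over $K(S)$, one has $K_{X_\eta}\sim 0$. Because $X$ is $\mathbb{Q}$-factorial with terminal singularities, $K_X$ is $\mathbb{Q}$-Cartier; pick $m\geq 1$ so that $mK_X$ is Cartier. The triviality on the generic fiber then produces a rational function $g\in K(X)^*$ and a decomposition
\[
mK_X = \mathrm{div}(g) + V,
\]
where $V$ is a divisor supported on the finitely many singular fibers $X_{s_i}=\sum_j m_{ij}F_{ij}$. The problem reduces to showing that this vertical remainder $V$ is itself the pullback of a $\mathbb{Q}$-divisor from $S$.

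Fix an $f$-ample divisor $H$ on $X$. First I would argue that $K_X\cdot F_{ij}\cdot H=0$ for every fiber component $F_{ij}$. By flatness of $f$, the intersection $K_X\cdot X_s\cdot H$ is independent of $s\in S$; at a general $s$ the restriction $K_X|_{X_s}$ is trivial, so this common value vanishes, giving $\sum_j m_{ij}(K_X\cdot F_{ij}\cdot H)=K_X\cdot X_{s_i}\cdot H=0$. Each summand is nonnegative because $F_{ij}\cdot H$ is an effective $1$-cycle contained in a fiber and $K_X$ is relatively nef; with positive multiplicities $m_{ij}$ and zero total, every term must vanish. Since principal divisors are numerically trivial against proper $1$-cycles, this also gives $V\cdot F_{ij}\cdot H=0$ for every $i,j$.

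The crux is to upgrade this intersection-theoretic vanishing to the assertion that $V_i:=V|_{X_{s_i}}$ is a $\mathbb{Q}$-multiple of the full fiber class. For each $i$, I would consider the symmetric bilinear form $Q_i(F_{ij},F_{ik}):=F_{ij}\cdot F_{ik}\cdot H$ on the $\mathbb{Q}$-span of the components of $X_{s_i}$. A threefold analogue of Zariski's lemma, proved by restricting to each surface component $F_{ij}$ and applying the Hodge index theorem together with the connectedness of $X_{s_i}$, asserts that $Q_i$ is negative semidefinite with one-dimensional kernel $\mathbb{Q}\cdot X_{s_i}$; the kernel contains the fiber class because $X_{s_i}=f^*s_i$ is the pullback of a point and hence numerically $H$-trivial on each component. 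Granted this, the vanishing obtained above forces $V_i=c_i X_{s_i}$ for some $c_i\in\mathbb{Q}$. Summing over $i$ yields $V=f^*\bigl(\sum_i c_i s_i\bigr)$, hence $mK_X\sim_{\mathbb{Q}} f^*D_0$ with $D_0:=\sum_i c_i s_i$; setting $D:=D_0/m-K_S$ produces the desired relation $K_X\sim_{\mathbb{Q}} f^*(K_S+D)$.

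The main obstacle is precisely this Zariski-type negative semidefiniteness on each singular fiber: one must check that terminal (possibly non-Gorenstein) singularities of $X$ do not spoil the restriction-to-components and Hodge-index arguments underlying the kernel computation. The $\mathbb{Q}$-factoriality hypothesis is used exactly to ensure that all of the required intersection numbers are well-defined, and with this in hand the remainder of the proof is standard threefold birational geometry.
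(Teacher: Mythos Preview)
The paper does not supply its own proof of this lemma; it is stated with a citation to Oguiso and used as input for the subsequent discussion. Your sketch is the standard argument and is essentially what one finds in the cited reference: trivialize $mK_X$ on the generic fiber to get a vertical remainder, use relative nefness together with deformation invariance of $K_X\cdot X_s\cdot H$ to force $K_X\cdot F_{ij}\cdot H=0$, and then invoke the threefold Zariski lemma (negative semidefiniteness of the pairing $F_{ij}\cdot F_{ik}\cdot H$ with kernel the full fiber) to conclude that the vertical part is a pullback. The acknowledged gap concerning the Zariski lemma over terminal $\mathbb{Q}$-factorial points is handled in the literature by choosing $H$ general enough that the surface $H\cap X$ avoids the finitely many singular points (terminal threefold singularities are isolated), after which the usual Hodge-index argument on a smooth surface applies verbatim; so there is no genuine obstruction there.
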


Specifically, for an abelian surface fibered threefold $X$ (not necessarily smooth) over $\mathbb{P}^1$,  while a relatively minimal model $X_{\operatorname{min}}$ does indeed exist, it may not be a smooth variety and lacks uniqueness. Moreover, the canonical divisor $K_X$ in this context is merely a rational multiple of the generic fiber. This situation is detailed further in Crauder and Morrison's work \cite{crauder1994minimal}.

So in order to generalize Hein's construction, we may have to look for fibrations which may not be smooth and whose canonical divisor is a fractional multiple of a fiber.

\begin{remark}

Our definition comes from Oguiso's paper \cite{oguiso1997note}. In the paper \cite{crauder1994minimal}, the authors also give a definition of minimal ablian fibration. It is clear that their definition is stronger than Oguiso's definition.

\end{remark}

Now we state the main results of our paper (details to be discussed in the following sections).

\begin{definition}

Let $N$ be an open manifold, $g$ be a complete Riemannian metric on $N$. 

(a) The metric $g$ is $\mathrm{ALG}
$, if there exists $r_0, \delta, \theta>0$, a compact set $K\subset N$ and a diffeomorphism $\Phi:S(\theta,r_0)\times T^{2m}\hookrightarrow N\backslash K$ with dense image such that
\begin{equation*}
\left|\nabla^k_{\mathrm{flat}}(\Phi^*g-g_{\mathrm{flat}})\right|_{g_{\mathrm{flat}}}\leq C(k)|z|^{-\delta-k}, \; \forall k\in\mathbb{N},
\end{equation*}where $S(\theta,r_0)=\{z\in\mathbb{C}:|z|>r_0,0<\arg z<2\pi \theta\} $ and $g_{\mathrm{flat}}=g_{\mathbb{C}}\oplus g_T$ for some flat metric on $T^{2m}$.

(b) The metric $g$ is $\mathrm{ALH}$ if there exists $\delta>0$,  a compact set $K\subset N$, and a diffeomorphism $\Phi:\mathbb{R}^+\times T^{2m+1}\to N\backslash K$ such that
\begin{equation*}
\left|\nabla^k_{g_{\mathrm{flat}}}\left(\Phi^*g-g_{\mathrm{flat}}\right)\right|_{g_{\mathrm{flat}}}\leq C(k)e^{-\delta t},\; \forall k\in\mathbb{N},
\end{equation*}where $g_{\mathrm{flat}}=dt^2+h$ with $h$ some flat metric on $T^{2m+1}$. 

\end{definition}

\begin{remark}

(1) This definition of ALG and ALH manifold is a direct generalization of the one in Hein's thesis. Also one can consider more general definitions, such as replacing the torus factor with an arbitrary compact Calabi-Yau manfold. 

(2) The tangent cones of ALH and ALG manifolds at infinity are $\mathbb{R}$ and $C$ respectively, where $C$ is a flat $2$-dimensional metric cone.

(3) We will use the concepts $\operatorname{ALG}^*$ and $\operatorname{ALH}^*$ without definition in our main theorem below, for the precise meaning of these two words, see the more precise version of the main Theorem \ref{main 2} in Section \ref{non-isotrivial models}.

\end{remark}

Our main theorem is:

\begin{theorem}[Main theorem]\label{main 0}

Let $\pi_i:X_i\to \mathbb{P}^1$, $i=1,2$ be two rational elliptic fibrations. Specify one particular singular fiber $F_i$ on each $X_i$, $i=1,2$. Let the two singular fibers $F_1,F_2$ project to the same point on $\mathbb{P}^1$ while all the other singular fibers to different points. Take the fiber product $f:X=X_1\times_{\mathbb{P}^1}X_2\to \mathbb{P}^1$. Now $X$ is a singular variety with all the singular loci contained in the singular fiber $F=F_1\times F_2$. Let $M=X\backslash F$. 

(1) Suppose the monodromy of both $F_1$ and $F_2$ are finite. Then for some specific types, there could exist $\operatorname{ALG}$ or $\operatorname{ALH}$ type Calabi-Yau metric on $M$.

(2) If $F_i$ is of $I_{b_i}^*$ type, then there exists generalized $\operatorname{ALH}^*$ type Calabi-Yau metric on $M$.

(3) If $F_1$ is of $I_{b}^*$ type and $F_2$ is of $\mathrm{II}^*$ or $\mathrm{III}^*$ or $\mathrm{IV}^*$ type, then there exists generalized $\operatorname{ALG}^*$ type Calabi-Yau metric on $M$.

\end{theorem}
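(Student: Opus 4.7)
Following the Tian--Yau programme as refined by Hein in the rational elliptic surface setting, I would proceed in three steps: first exhibit a nowhere-vanishing holomorphic volume form $\Omega$ on $M = X \setminus F$; second build an approximate Kähler form $\omega_0$ whose asymptotic geometry is of the claimed type; third solve the complex Monge--Ampère equation $(\omega_0 + i\partial\bar\partial u)^3 = \Omega \wedge \bar\Omega$ using Hein's existence theorem for noncompact Kähler manifolds with controlled geometry at infinity. For the volume form, since each $X_i$ is a rational elliptic surface with $K_{X_i} = -[F_i]$, there is a meromorphic $2$-form $\Omega_i$ on $X_i$ with a simple pole along $F_i$ and no other zeros or poles. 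Pulling the $\Omega_i$ back to $X$ via the two projections and taking a suitable relative wedge product over $\mathbb{P}^1$ (in effect, dividing out by the common base coordinate) produces a nowhere-vanishing holomorphic $3$-form $\Omega$ on $M$ whose behavior along $F$ is governed by the Kodaira types of $F_1$ and $F_2$.

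\textbf{Building the approximate metric.} Away from $F$, the abelian fibration $f : M \to \mathbb{P}^1 \setminus \{p\}$ carries a natural semi-flat Kähler metric, built from the period data of the two elliptic factors combined with a suitable metric on the punctured base. I would construct $\omega_0$ by transporting a Greene--Shapere--Vafa--Yau ansatz from each $X_i$ to the fiber product and patching with a cut-off near the link of $F$. The asymptotic model then depends on the joint monodromy of $(F_1, F_2)$: if both monodromies are finite, a finite base change trivializes the period lattices, and the fiberwise flat metric combined with a cone metric on the base yields an $\operatorname{ALG}$ end when the base cone angle is strictly less than $2\pi$ and an $\operatorname{ALH}$ end in the cylindrical limit. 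If $F_i$ is of type $I_b^*$, the infinite unipotent part of the monodromy forces logarithmic corrections and produces $\operatorname{ALH}^*$ or $\operatorname{ALG}^*$ behavior; the mixed case with $F_1$ of type $I_b^*$ and $F_2$ of type $\mathrm{II}^*/\mathrm{III}^*/\mathrm{IV}^*$ interpolates between these two regimes. In each case a direct computation of $\Omega \wedge \bar\Omega/\omega_0^3$ near infinity yields an error of the form $1 + O(r^{-\delta})$ or $1 + O(e^{-\delta t})$ for some $\delta>0$ determined by the type.

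\textbf{The Monge--Ampère step and the main difficulty.} With $\omega_0$ in hand, the equation becomes $(\omega_0 + i\partial\bar\partial u)^3 = e^f \omega_0^3$ where $f = \log(\Omega \wedge \bar\Omega/\omega_0^3)$ lies in the weighted Hölder space adapted to the asymptotic model. The background geometry satisfies Hein's SOB condition (with the appropriate growth rate) in each of the three cases, so his main existence theorem produces a bounded solution $u$ decaying at infinity, and $\omega_0 + i\partial\bar\partial u$ is the sought Ricci-flat Kähler metric of the claimed asymptotic type. The principal obstacle, in my view, is not this analytic step --- the Monge--Ampère machinery is by now well-developed --- but rather the construction and verification of the model in the singular cases (2) and (3). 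The total space $X$ carries nontrivial $\mathbb{Q}$-factorial terminal singularities along $F$, and one must verify that the semi-flat ansatz on $M$ exhibits the precise generalized $\operatorname{ALG}^*$ or $\operatorname{ALH}^*$ behavior to sufficient order. This requires a careful analysis of the Néron models of the two elliptic factors, of the joint monodromy acting on fiberwise cohomology, and of the link of $F$ in $M$, which must be identified as a torus bundle (or nilmanifold) over a flat cone or a circle before the Kodaira type pair can be translated into the precise asymptotic model stated in the theorem.
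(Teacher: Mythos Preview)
Your overall three-step strategy matches the paper's, and your construction of $\Omega$ via a ``relative wedge product'' is essentially the Poincar\'e residue argument the paper uses. The semi-flat ansatz and its case-by-case asymptotic analysis also follow the paper's route. But there is a genuine gap in your middle step, and your assessment of where the difficulty lies is inverted.

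You write that you would build $\omega_0$ by ``patching with a cut-off near the link of $F$,'' and that the Monge--Amp\`ere machinery is ``by now well-developed.'' This glosses over what is in fact the heart of the argument. A naive cutoff between a global K\"ahler form $\omega$ and the semi-flat $\omega_{\mathrm{sf},\varepsilon}$ fails for three reasons, each of which the paper addresses explicitly. First, one needs $[\omega] = [\omega_{\mathrm{sf},\varepsilon}]$ in $H^2(X|_{\Delta^*},\mathbb{R})$; since the generic fiber is a $4$-torus, $H_2$ of the fiber is $\mathbb{Z}^6$ rather than $\mathbb{Z}$, and there are additional ``bad cycles'' coming from monodromy-invariant $1$-cycles swept around the puncture. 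Matching on all of these is a genuine obstruction, and the paper resolves it by exploiting the fiber-product structure: one takes $\omega = \iota^*(\omega_1 + \omega_2)$ with each $\omega_i$ chosen on $X_i$ to kill its own bad cycles, and then checks via K\"unneth that the pushforward of a bad cycle on $X$ pairs trivially. Second, even once the de Rham classes agree, $\omega_{\mathrm{sf},\varepsilon} - \omega$ need not be $\partial\bar\partial$-exact because $H^{0,1}(X|_{\Delta^*}) \cong H^0(\Delta^*, R^1 f_* \mathcal{O})$ is nonzero; the paper kills this obstruction by translating $\omega_{\mathrm{sf},\varepsilon}$ along a carefully chosen holomorphic section. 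Third, in all of your cases the volume growth exponent $\beta$ is at most $2$, so Hein's existence theorem requires the integrability condition $\int_M (e^f - 1)\omega_0^3 = 0$, which you do not mention. The paper achieves this by introducing a two-parameter family $\omega_0(\alpha,t)$ and a delicate linear-algebra argument to select parameters making the integral vanish while preserving positivity.

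By contrast, the issue you flag as ``the principal obstacle'' --- verifying the precise $\mathrm{ALH}^*$/$\mathrm{ALG}^*$ asymptotics in cases (2) and (3) --- is handled in the paper by direct computation from the explicit period data $\tau_i(z)$ plugged into the semi-flat formula; no N\'eron models or nilmanifold identifications are needed beyond Kodaira's local models. The singularities of $X$ along $F$ play no role in the analysis on $M$, which is smooth.
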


This paper is organized as follows:
\begin{itemize}

\item In Section \ref{torus bundles}, we recall the semi-flat metrics on smooth torus bundles. 

\item In Section \ref{isotrivial models}, we discuss an isotrivial construction, which will shed some light on the construction of the non-isotrivial one in Section \ref{non-isotrivial models}.

\item In Section \ref{non-isotrivial models}, we focus on the construction of ansatz at infinity on non-isotrivial abelian surface fibrations and discuss the asymptotical behavior at infinity. 

\item In Section \ref{perturbation}, we prove a $\partial\bar{\partial}$-lemma in our cases. Then we glue the local ansatz constructed at infinity in Section \ref{non-isotrivial models} with an appropriate initial k{\"a}hler form on the whole space to get a background metric for the complex Monge-Ampère equation. Then with Tian-Yau-Hein's package, we can perturb the background metric into a genuine Calabi-Yau metric whose behavior at infinity has already been well understood.

\item In Section \ref{Further discussion}, we discuss some generalizations of our construction and the potential difficulties we may encounter.

\item In Appendix \ref{Tian-Yau-Hein's package}, we briefly review the Tian-Yau-Hein's package.

\end{itemize}

\textbf{Acknowledgments:}  We would like to express our sincere gratitude to Professor Gang Tian, for his helpful suggestions, patient guidance and revision of the earlier version. We are deeply thankful to Professor Yalong Shi for his helpful discussions. Finally, we are very appreciative of Chenhan Liu for his 
encouragement and assistance throughout the research. Yang Zhang is supported by NSFC No.12371058.

\section{Review of Ricci-flat metrics on torus bundles}\label{torus bundles}

In order to explain our construction, we review some basic facts following Hein's thesis \cite{hein2010gravitational}.

Assume $f: X \rightarrow S$ is a holomorphic submersion over a Riemann surface S (not necessarily compact) such that all fibers $X_s=f^{-1}(s)$ are smooth complex $m$-tori. Assume $f$ admits a holomorphic section $\sigma:S\to X$ and a constant polarization $\omega$, that is $\omega\in \Omega^2(X,\mathbb{R})$ such that $\omega|_{X_s}$ is a K{\"ahler} metric, and there exists $c_i \in \mathbb{R}, \eta_i(s) \in H^2\left(X_s, \mathbb{Z}\right)$ such that $\left[\left.\omega\right|_{X_s}\right]=\sum c_i \eta_i(s)$ for all $s \in S$.

From the lemma below, we know that when $\omega$ is a closed form, we get a constant polarization.

\begin{lemma}[Proposition 2.1 in \cite{hein2015remarks}]\label{Cartan lee}

Suppose $X_s\cong \mathbb{C}^m/\Lambda_s$. Fix a basis $\left(\tau_1(s), \ldots, \tau_{2 m}(s)\right)$ of the lattice $\Lambda_s$ that varies holomorphically with $s \in S$, and let $\left(\xi^1(s), \ldots, \xi^{2 m}(s)\right)$ be the $\mathbb{R}$-dual basis of 1-forms on $\mathbb{C}^m$. Then the classes $\left[\xi^i(s) \wedge \xi^j(s)\right] \in$ $H^2\left(X_s, \mathbb{Z}\right) \subset H^2\left(X_s, \mathbb{R}\right)$ with $i<j$ form a basis of $H^2\left(X_s, \mathbb{Z}\right)$. Let $\omega$ be a real 2-form on $X$ whose restriction to $X_s$ is closed for all $s \in S$, and expands $\left[\left.\omega\right|_{X_s}\right]=\sum\limits_{i<j} P_{i j}(s)\left[\xi^i(s) \wedge \xi^j(s)\right]$. If $\omega$ is closed, then $P_{i j}(s)$ do not depend on $y$.

\end{lemma}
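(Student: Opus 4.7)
The plan is to interpret $P_{ij}(s)$ as the period of $\omega|_{X_s}$ over an explicit 2-cycle $T_{ij}(s)\subset X_s$ that varies smoothly with $s$, and then to exploit global closedness of $\omega$ on $X$ together with Stokes' theorem to force these periods to be locally constant in $s$. This is essentially a concrete repackaging of the Gauss--Manin connection on $R^2f_*\mathbb{R}$.

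Concretely, for each pair $i<j$ I would define $T_{ij}(s)\subset X_s$ to be the image under the quotient map $\mathbb{C}^m\to \mathbb{C}^m/\Lambda_s=X_s$ of the closed real parallelogram spanned by the lattice vectors $\tau_i(s)$ and $\tau_j(s)$. Because $\tau_i(s),\tau_j(s)\in\Lambda_s$, this is a genuine 2-subtorus, hence a closed 2-cycle in $X_s$. A short computation using that $\xi^1(s),\dots,\xi^{2m}(s)$ is the $\mathbb{R}$-dual basis to $\tau_1(s),\dots,\tau_{2m}(s)$ gives
$$\int_{T_{ij}(s)} \xi^k(s)\wedge\xi^l(s) \;=\; \delta^k_i\delta^l_j-\delta^l_i\delta^k_j,$$
so that pairing with the expansion $[\omega|_{X_s}]=\sum_{k<l}P_{kl}(s)[\xi^k(s)\wedge\xi^l(s)]$ yields $P_{ij}(s)=\int_{T_{ij}(s)}\omega|_{X_s}$.

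The main step is then to vary $s$. On a simply connected open $U\subset S$ the holomorphic basis $\tau_i(s)$ is well defined and varies smoothly, so the family $\{T_{ij}(s)\}_{s\in U}$ assembles into a smooth 3-chain $W\subset f^{-1}(U)\subset X$ whose oriented boundary is $T_{ij}(s_1)-T_{ij}(s_0)$ for any chosen endpoints $s_0,s_1\in U$. Since $\omega$ is closed on $X$, Stokes' theorem gives
$$P_{ij}(s_1)-P_{ij}(s_0) \;=\; \int_{\partial W}\omega \;=\; \int_W d\omega \;=\; 0,$$
so $P_{ij}$ is constant on $U$. By the standing assumption that the basis $\tau_i(s)$ is a global holomorphic basis of the lattice bundle over $S$, one can do this along any path in $S$, concluding that $P_{ij}(s)$ does not depend on $s$.

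I do not expect any serious obstacle: the one delicate point is only bookkeeping, namely that the oriented boundary of the tube $W$ is genuinely $T_{ij}(s_1)-T_{ij}(s_0)$ as a 2-chain in $X$ (with no contribution from the lattice identifications, which cancel in pairs), and that the orientations/signs in the period formula of Step 1 are consistent with the choice $i<j$. A more abstract alternative would be to note that closedness of $\omega$ makes $[\omega|_{X_s}]$ parallel for the Gauss--Manin connection, while the classes $[\xi^i(s)\wedge\xi^j(s)]$ are already parallel because they come from a topological trivialization of the torus bundle by the lattice basis; but the tube-and-Stokes argument above makes the statement fully transparent.
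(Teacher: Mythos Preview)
Your argument is correct. The paper's proof is exactly the ``more abstract alternative'' you mention at the end: it observes that the classes $[\xi^i(s)\wedge\xi^j(s)]$ form, by construction, a basis of $\nabla^{\mathrm{GM}}$-parallel sections of $R^2f_*\mathbb{R}\otimes\mathcal{C}^\infty_S$, and then applies Cartan's magic formula to compute $\nabla^{\mathrm{GM}}_v[\omega|_{X_s}]=[\iota_{\tilde v}\,d\omega]=0$ for any horizontal lift $\tilde v$ of a tangent vector $v$ on $S$. Your tube-and-Stokes computation is the integrated form of precisely this identity (Stokes on the swept-out $3$-chain is the global version of Cartan's formula applied to the Lie derivative along the sweep direction), so the two proofs are mathematically the same; yours has the merit of being self-contained and avoiding any reference to the Gauss--Manin formalism, while the paper's is shorter once that language is in place. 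One cosmetic point: your $3$-chain $W$ should be swept over a path in $U$ from $s_0$ to $s_1$, not over all of $U$, but this is clearly what you intend.
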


\begin{proof}

Consider the Gauss-Manin connection $\nabla^{\mathrm{GM}}$ on the smooth $\mathbb{R}$-vector bundle $R^2 f_* \mathbb{R} \otimes \mathcal{C}_S^{\infty}$ on $S$. By definition, the sections $\left[\xi^i(y) \wedge \xi^j(y)\right]$ of this vector bundle form a basis of the space of $\nabla^{\mathrm{GM}}$-parallel sections. On the other hand, since $\omega$ is closed, Cartan's magic formula yields that: 
\begin{equation*}
    \nabla^{\mathrm{GM}}_v\left[\left.\omega\right|_{X_s}\right]=\left[d\circ \iota_{\tilde{v}}\omega+\iota_{\tilde{v}}\circ d\omega\right]=\left[\iota_{\tilde{v}}\circ d\omega\right]=0,
\end{equation*}where $\tilde{v}$ is the horizontal lift of tangent vector $v$ on $S$ with respect to the Gauss-Manin connection, and the second equality comes from the assumption that $\omega|_{X_s}$ is closed.

\end{proof}

For $\varepsilon>0, s \in S$, we denote by $g_{s, \varepsilon}$ the unique flat K{\"a}hler metric on $X_s$ with $\operatorname{vol}\left(X_s, g_{s, \varepsilon}\right)=\varepsilon$ and whose K{\"a}hler class is a proportional to  $\left[\left.\omega\right|_{X_s}\right]$. Restricting $g_{s, \varepsilon}$ to the tangent space $T_{\sigma(s)} X_s$ induces a hermitian metric $h_{\varepsilon}$ on the holomorphic vector bundle $E:=\sigma^* T_{X / S}$, that is 
\begin{equation*}
h_{\varepsilon,s}(v,w)\coloneqq g_{s,\varepsilon}(v,w),\quad \forall v,w\in T_{\sigma(s)}X_s=(\sigma^*T_{X/S})_s=E_s.
\end{equation*}

Assume further that $X$ has a holomorphic volume form $\Omega$. Then $\Omega$ induces a faithful pairing 
\begin{equation*}
\wedge^m E \otimes T^{1,0} S \rightarrow \mathbb{C},\; \left((v_1,...,v_m),w\right)\mapsto \Omega(v_1,...,v_m,\sigma_*w)
\end{equation*}
 Using the hermitian metric on $\wedge^m E$ induced by $h_{\varepsilon}$, we get a Riemannian metric $g_{S, \varepsilon}$. Explicitly, this metric is given by:
\begin{equation*}
g_{S,\varepsilon}(w,w)=\frac{\Omega\wedge \bar{\Omega}(v_1,...,v_n,\sigma_*w,\bar{v}_1,...,\bar{v}_n,\sigma_*\bar{w})}{\det\left(h_\varepsilon(v_i,v_j)\right)}. 
\end{equation*}

Using $g_{S, \varepsilon}$ and the family of flat metrics $\left\{g_{s, \varepsilon}\right\}$ on the fibers, we can build a submersion metric on $X$.

\begin{lemma}

Choose a fiber-preserving biholomorphism $X \cong E / \Lambda$ for some holomorphic lattice bundle $\Lambda \subset E$.

(1) $\Lambda$ induces a flat $\mathbb{R}$-linear connection on $E$, hence an integrable horizontal distribution $\mathcal{H}$ on $X$

(2) $\mathcal{H}$ is independent of the biholomorphism $X \cong E / \Lambda$.

\end{lemma}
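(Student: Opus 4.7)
For Part~(1), my plan is to construct the flat connection by producing parallel frames locally. Over any sufficiently small open $U \subset S$, the holomorphic lattice bundle $\Lambda|_U$ admits a holomorphic frame $\tau_1,\ldots,\tau_{2m}$ viewed as sections of $E|_U$; since $\Lambda_s$ has full real rank in $E_s$, this is simultaneously an $\mathbb{R}$-frame of the underlying real vector bundle. I would then \emph{define} an $\mathbb{R}$-linear connection on $E|_U$ by declaring these $2m$ sections parallel and extending $\mathbb{R}$-linearly on fibers. Any other holomorphic frame of $\Lambda|_U$ differs from this one by a continuous map into the discrete group $GL(2m,\mathbb{Z})$, hence a locally constant matrix, so the local connections agree on overlaps. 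This yields a global flat $\mathbb{R}$-linear connection $\nabla^{\mathrm{flat}}$ on $E$ for which $\Lambda$ is parallel. Because $\Lambda$ is parallel, the horizontal lifts of tangent vectors on $S$ descend from $E$ to a well-defined distribution $\mathcal{H}$ on $X=E/\Lambda$, and integrability follows from flatness via Frobenius.

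For Part~(2), given two biholomorphisms $\phi_i \colon X \to E_i/\Lambda_i$ ($i=1,2$), both carrying the distinguished holomorphic section $\sigma$ to the respective zero sections, I would analyse the composition $\psi = \phi_2 \circ \phi_1^{-1} \colon E_1/\Lambda_1 \to E_2/\Lambda_2$. Fibrewise, $\psi_s$ is a biholomorphism of complex tori fixing the origin, hence a group homomorphism by the standard rigidity of abelian varieties. Assembled in families, this produces a holomorphic bundle isomorphism $A \colon E_1 \to E_2$ with $A(\Lambda_1)=\Lambda_2$. Since $A$ carries any local parallel frame of $\Lambda_1$ to a local parallel frame of $\Lambda_2$, it intertwines the two flat connections and therefore maps horizontal distribution to horizontal distribution. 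Pulling back gives $\phi_1^* \mathcal{H}_1 = \phi_2^* \mathcal{H}_2$ as distributions on $X$, proving independence.

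The main subtlety I anticipate is justifying that $\psi$ is fibrewise linear rather than merely affine. An arbitrary fiber-preserving biholomorphism between two torus bundles can be written $v \mapsto A(s)v + c(s)$, and translation by a generic holomorphic section $c$ does \emph{not} preserve $\mathcal{H}$: it does so only when $c$ is itself $\nabla^{\mathrm{flat}}$-parallel, which is a very restrictive condition. The resolution is that the setup singles out the holomorphic section $\sigma$ (with $E=\sigma^* T_{X/S}$), and any biholomorphism $X \cong E/\Lambda$ is implicitly required to carry $\sigma$ to the zero section. Under this normalization the translation part $c$ vanishes, the ambiguity collapses to the lattice-preserving linear part $A$, and the remainder of Part~(2) is a routine consequence of the flatness argument from Part~(1).
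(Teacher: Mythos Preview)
Your proposal is correct and follows essentially the same approach as the paper: for (1) the paper defines $\nabla$ by declaring a local lattice frame $\sigma_i$ parallel (writing $\nabla(\sum\alpha_i\sigma_i)=\sum\sigma_i\otimes d\alpha_i$), and for (2) it simply states that two such isomorphisms differ by a holomorphic section $S\to\operatorname{Aut}(E)$. Your treatment of (2) is in fact more thorough than the paper's one-line proof, since you spell out the rigidity step yielding linearity and explain why the lattice-preserving automorphism $A$ intertwines the two flat connections; the zero-section normalization you flag as the key subtlety is indeed implicit in the paper's setup (where $E=\sigma^*T_{X/S}$).
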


\begin{proof}

(1) Define $\nabla:E\to E\otimes \Omega_S$ as follows: For any $\sigma\in E$, we can write $\sigma=\sum\limits_i\alpha_i\sigma_i$, where $\sigma_i\in \Lambda$ form a local basis of $E$. Set
\begin{equation*}
\nabla \sigma=\sum\limits_{i}\sigma_i\otimes d\alpha_i\in E\otimes \Omega_S.
\end{equation*}

(2) Two such isomorphisms differ only by a holomorphic section $S \rightarrow \operatorname{Aut}(E)$.

\end{proof}

\begin{remark}

The above connection is none other than the Gauss-Manin connection. Specifically, consider the sheaf $R^1f_*\mathbb{Z}$ on $S$. Since $f:X\to S$ is a proper submersion, $R^1f_*\mathbb{Z}$ is locally constant with $\left(R^1f_*\mathbb{Z}\right)\big{|}_{s}=H^1(X_s,\mathbb{Z})=(T_{\sigma(s)}X_s)^{\vee}$. Then $E^{\vee}\cong (R^1f_*\mathbb{Z})\otimes\mathcal{C}^\infty(S)$. Thus the connection in the above lemma is just the dual connection of the Gauss-Manin connection on the locally free sheaf $(R^1f_*\mathbb{Z})\otimes\mathcal{C}^\infty(S)$.

\end{remark}

Define
\begin{equation}\label{semi-flat 0}
g_{\mathrm{sf}, \varepsilon}(u, v):=g_{S, \varepsilon}\left(f_* u, f_* v\right)+g_{s, \varepsilon}(P u, P v),\; u, v \in T_x X, s=f(x),
\end{equation}where $P:=T_xX\to \mathcal{V}_x=T_xX_s$ is the projection along $\mathcal{H}_x$. This defines a semi-flat hermitian metric $g_{\mathrm{sf}, \varepsilon}$ on $X$. 

For computational purpose, we now derive the  explicit formula for $g_{\mathrm{sf}, \varepsilon}$ in local coordinates in terms of $\Omega$ and the periods of the tori $X_s$ with respect to $\omega$. 

Fix isomorphisms $\left.\left.X\right|_U \cong E\right|_U \big{/}\Lambda \cong\left(U \times \mathbb{C}^m\right) / \Lambda$ over an open set $U \subset  \mathbb{C}$, with coordinates $z$ on $U$ and $v$ along the fibers. Fix an oriented basis $\left(\tau_1, \ldots, \tau_{2 m}\right)$ for $\Lambda$ at one point and extend it to a tuple of multi-valued functions $\tau_i(z) \in \mathcal{O}\left(U, \mathbb{C}^m\right)$ that generates $\Lambda$ everywhere. Let $\left(\xi^1(z), \ldots, \xi^{2 m}(z)\right)$ be $\mathbb{R}$-dual to $\left(\tau_1(z), \ldots, \tau_{2 m}(z)\right)$. Now we can relate the complex coordinate $v_1,...,v_m$ to the real basis: Let $\tau_i(z)=(T_{1i}(z),...,T_{mi}(z))$, $\forall i=1,...,2m$. Define
\begin{equation*}
T=\begin{pmatrix}
T_{11} & \cdots & T_{12m}\\
\vdots & \ddots & \vdots\\
T_{m1} & \cdots & T_{m2m}
\end{pmatrix}=(\tau_1^t,...,\tau_{2m}^t)
\in \mathcal{O}\left(U, \mathbb{C}^{m \times 2 m}\right).
\end{equation*}
Then
\begin{align*}
(dv^1,...,dv^m)=(\xi^1,...,\xi^{2m})\begin{pmatrix}
T^{11}& \cdots & T_{m1}\\
\vdots & \ddots & \vdots\\
T_{12m}& \cdots & T_{m2m}
\end{pmatrix}.
\end{align*}Written briefly, we have
\begin{align*}
\begin{pmatrix}
dv\\
d\bar{v}
\end{pmatrix}=\begin{pmatrix}
T\\
\bar{T}
\end{pmatrix}\cdot \xi,\quad \xi=(
\Pi, \bar{\Pi})\cdot\begin{pmatrix} dv\\
d\bar{v}
\end{pmatrix},
\end{align*}where $\Pi\in \mathcal{O}\left(U, \mathbb{C}^{2m \times m}\right)$ satisfy $(\Pi,\bar{\Pi})\begin{pmatrix}
T\\
\bar{T}
\end{pmatrix}=I_{2m}$.

Existence of a constant polarization is equivalent to the existence of $Q \in \mathbb{R}^{2 m \times 2 m}, Q+Q^{\operatorname{tr}}=0$, such that $\omega=\frac{1}{2} \sum Q_{i j} \xi^i \wedge \xi^j$ restricts to a flat K{\"a}hler metric on each fiber.

\begin{lemma}

(1) Since $T$ is multi-valued, the well-definedness of $\omega$ requires that under the monodromy action, $T$ transforms as $T\mapsto TA$, where $A \in \operatorname{Gl}(2 m, \mathbb{R})$ satisfies $A^{\operatorname{tr}} Q A=Q$.

(2) There exists $S \in \operatorname{Gl}(2 m, \mathbb{R})$ such that $S^{\operatorname{tr}} Q S=\begin{pmatrix}
0 & I \\ 
-I & 0
\end{pmatrix}
$.

(3) Write $T S=R(1, Z)$ with $R: U \rightarrow \mathrm{Gl}(m, \mathbb{C}), Z: U \rightarrow \mathbb{C}^{m \times m}$ multi-valued holomorphic maps. Then $\omega$ is positive $(1,1)$ if and only if $Z$ belongs to the Siegel upper half-plane $\mathfrak{H}_m$

(4) $\omega=i H_{j k} d v^j \wedge d \bar{v}^k$, where $H^{-1}:=2 \bar{R}(\operatorname{Im} Z) R^{\operatorname{tr}}=i \bar{T} Q^{-1} T^{\operatorname{tr}}$.

\end{lemma}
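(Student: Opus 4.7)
The plan is to treat the four parts in order, since (3) and (4) build on the symplectic normal form from (2).

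For part (1), the dual cobasis transforms contragrediently under a change of lattice basis: the defining relation $\xi^i(\tau_j) = \delta^i_j$ combined with $T \mapsto TA$ (equivalently $\tau'_j = \sum_k A_{kj}\tau_k$) forces $\xi \mapsto A^{-1}\xi$. Substituting into $\omega = \tfrac12\xi^t Q\xi$ gives $\omega \mapsto \tfrac12 \xi^t (A^{-1})^t Q A^{-1}\xi$, which agrees with the original expression iff $A^t Q A = Q$. Part (2) is standard linear symplectic algebra: any non-degenerate antisymmetric real form can be brought to the canonical matrix $J = \bigl(\begin{smallmatrix}0&I\\-I&0\end{smallmatrix}\bigr)$ by a symplectic Gram--Schmidt iteration applied to a $Q$-orthogonal basis.

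For part (3), the first step would be to express $\Pi$ in terms of $R, Z, S$. Inverting the $2m\times 2m$ block matrix $\bigl(\begin{smallmatrix}T\\\bar T\end{smallmatrix}\bigr) = \bigl(\begin{smallmatrix}R(I,Z)\\\bar R(I,\bar Z)\end{smallmatrix}\bigr) S^{-1}$ by hand gives
\[
\Pi = S\begin{pmatrix}-\bar Z\\ I\end{pmatrix}(Z-\bar Z)^{-1}R^{-1}.
\]
A real $2$-form $\omega = \tfrac12\xi^t Q\xi$ is of type $(1,1)$ iff its $(2,0)$-part vanishes, which (writing $\xi = \Pi\,dv + \bar\Pi\,d\bar v$) reduces to the matrix equation $\Pi^t Q\Pi = 0$. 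Using $S^t Q S = J$, this evaluates to
\[
(R^{-1})^t (Z-\bar Z)^{-t}(\bar Z - \bar Z^t)(Z-\bar Z)^{-1} R^{-1} = 0,
\]
that is, $Z = Z^t$. Once $Z$ is symmetric, positivity of the $(1,1)$-part (computed in part (4)) becomes positivity of $\mathrm{Im}\,Z$, so $Z$ lies in $\mathfrak H_m$.

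Finally, for part (4), I would plug the same formula for $\Pi$ into the $(1,1)$-piece $\omega^{1,1} = \sum (\Pi^t Q \bar\Pi)_{kl}\,dv^k\wedge d\bar v^l$. The analogous block computation yields $\Pi^t Q \bar\Pi = -(R^{-1})^t (Z-\bar Z)^{-1}\bar R^{-1}$; using $Z-\bar Z = 2i\,\mathrm{Im}\,Z$ and matching coefficients with $iH_{jk}\,dv^j\wedge d\bar v^k$ gives $H^{-1} = 2\bar R(\mathrm{Im}\,Z)R^t$. The equivalent form $H^{-1} = i\bar T Q^{-1}T^t$ follows from $Q^{-1} = -SJS^t$ via
\[
i\bar T Q^{-1} T^t = -i\bar R(I,\bar Z) J(I,Z)^t R^t = -i\bar R(\bar Z - Z) R^t = 2\bar R(\mathrm{Im}\,Z) R^t,
\]
again using $Z = Z^t$. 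The one nontrivial step is extracting the symmetry of $Z$ from the vanishing of the $(2,0)$-part; the remainder is block-matrix bookkeeping.
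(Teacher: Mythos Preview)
Your argument is correct and in fact covers more than the paper does: the paper only proves (4), taking (1)--(3) as essentially standard. The one slip is in your final display, where $(I,\bar Z)J(I,Z)^t$ equals $Z-\bar Z$ rather than $\bar Z - Z$ once $Z=Z^t$; this is harmless since the factor $-i$ then produces $2\operatorname{Im} Z$ as claimed.

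The route you take for (4) is genuinely different from the paper's. You invert $\bigl(\begin{smallmatrix}T\\\bar T\end{smallmatrix}\bigr)$ once and for all in terms of $S,R,Z$, obtaining the closed formula $\Pi = S\bigl(\begin{smallmatrix}-\bar Z\\ I\end{smallmatrix}\bigr)(Z-\bar Z)^{-1}R^{-1}$, and then evaluate $\Pi^tQ\Pi$ and $\Pi^tQ\bar\Pi$ directly using $S^tQS=J$. The paper instead first establishes the equality $2\bar R(\operatorname{Im} Z)R^t = i\bar TQ^{-1}T^t$ by a short computation, then performs two successive coordinate changes (replacing $\xi$ by $\xi(S^t)^{-1}$ and $dv$ by $dv(R^{-1})^t$) to reduce to the normalized situation $S=I$, $R=I$, $Q=J$, and only in that special case computes $\Pi$ explicitly and checks $\Pi^tQ\bar\Pi = -\tfrac{i}{2}(\operatorname{Im} Z)^{-1}$. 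Your approach is cleaner in that it avoids tracking how the claimed identity transforms under those coordinate changes, and it gives (3) for free along the way; the paper's reduction has the virtue that the final verification is done with the simplest possible matrices, but one must convince oneself that the two changes of variable really reduce the general statement to the special one.
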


\begin{proof}

We only prove (4). First prove $2\bar{R}(\operatorname{Im}Z)R^t=\sqrt{-1}\bar{T}Q^{-1}T^t$. Note
\begin{align*}
\frac{\sqrt{-1}}{2}\overline{R^{-1}T}Q^{-1}(R^{-1}T)^t&=\frac{\sqrt{-1}}{2}(1,\bar{Z})S^{-1}Q^{-1}(S^{-1})^t(1,Z)^t\\
&=\frac{\sqrt{-1}}{2}(1,\bar{Z})\left(S^tQS\right)^{-1}(1,Z)^t\\
&=-\frac{\sqrt{-1}}{2}(1,\bar{Z})\begin{pmatrix}
0& I\\
-I& 0
\end{pmatrix}(1,Z)^t\\
&=\operatorname{Im}Z.
\end{align*}

Let $\omega=\frac{1}{2} \sum Q_{i j} \xi^i \wedge \xi^j$ and $\tilde{\omega}=i H_{j k} d v^j \wedge d \bar{v}^k$.

Change variable by $\left(\eta^1,...,\eta^{2m}\right)=\left(\xi^1,...,\xi^{2m}\right)(S^t)^{-1}$. Then $\omega=\frac12\sum\tilde{Q}_{ij}\eta^i\wedge \eta^j$, where $\left(\tilde{Q}_{ij}\right)=\begin{pmatrix}
0 & I\\
-I & 0
\end{pmatrix}$. Now $\tilde{S}=I_{2m}$. 

Change variable by $\left(dz^1,...,dz^m\right)=\left(dv^1,...,dv^m\right)(R^{-1})^t$, then
\begin{equation*}
\tilde{\omega}=\frac{\sqrt{-1}}{2}\left(dz^1,...,dz^m\right)(\operatorname{Im}Z)^{-1}\begin{pmatrix}
dz^1\\
\vdots\\
dz^m
\end{pmatrix}.
\end{equation*}Now $\tilde{R}=1$, $\tilde{T}=(1,Z)$. Hence to prove $\omega=\tilde{\omega}$, we only need to consider the case $S=1$, $R=1$, $Q=\begin{pmatrix}
0 & I\\
-I & 0
\end{pmatrix}$.

Let $\Pi=(\pi_{i\alpha})_{2m\times m}$ such that 
\begin{align*}
\xi^i=\sum_\alpha \pi_{i \alpha} d v_\alpha+\sum_\alpha \bar{\pi}_{i \alpha} d \bar{v}_\alpha.
\end{align*}
Now direct computation shows
\begin{align*}
\omega&=\frac{1}{2} \sum q_{i j} \xi^i \wedge \xi^j\\
&= \frac{1}{2} \sum q_{i j}\left(\pi_{i \alpha} d v_\alpha+\bar{\pi}_{i \alpha} d \bar{v}_\alpha\right) \wedge\left(\pi_{j \beta} d v_\beta+\bar{\pi}_{j \beta} d \bar{v}_\beta\right) \\
&= \frac{1}{2} \sum q_{i j} \pi_{i \alpha} \pi_{j \beta} d v_\alpha \wedge d v_\beta+\frac{1}{2} \sum q_{i j} \bar{\pi}_{i \alpha} \bar{\pi}_{j \beta} d \bar{v}_\alpha \wedge d \bar{v}_\beta \\
& +\frac{1}{2} \sum q_{i j}\left(\pi_{i \alpha} \bar{\pi}_{j \beta}-\bar{\pi}_{i \beta} \pi_{j \alpha}\right) d v_\alpha \wedge d \bar{v}_\beta .
\end{align*}
From this we see that $\omega$ is of type $(1,1)$ if and only if the coefficient matrix
\begin{align*}
\frac{1}{2}\left(\sum_{i, j} q_{i j} \pi_{i \alpha} \pi_{j \beta}\right)= \Pi^t \cdot Q \cdot \Pi=0;
\end{align*}
and $\omega$ is positive if and only if
\begin{align*}
\frac{1}{2 \sqrt{-1}}\left(\sum_{i, j} q_{i j}\left(\pi_{i \alpha} \bar{\pi}_{j \beta}-\bar{\pi}_{i \beta} \pi_{j \alpha}\right)\right)_{\alpha \beta} \quad=\frac{1}{2 \sqrt{-1}}\left( \Pi^t Q \bar{\Pi}- \Pi^t Q \bar{\Pi}\right)=\frac{1}{\sqrt{-1}} \Pi^t Q \bar{\Pi}>0
\end{align*}
is hermitian positive definite. 

Now $Q=\begin{pmatrix}
0 & 1\\
-1 & 0
\end{pmatrix}$ and $(\Pi,\bar{\Pi})\begin{pmatrix}
T\\
\bar{T}
\end{pmatrix}=I_{2m}$ implies that $\Pi=\begin{pmatrix}
\Pi_1\\
\Pi_2
\end{pmatrix}$, where $\Pi_1=-\frac{\sqrt{-1}}{2}(\operatorname{Im}Z)^{-1}$, $\Pi_2=\frac{\sqrt{-1}}{2}\bar{Z}(\operatorname{Im}Z)^{-1}$. Hence
\begin{align*}
\omega=\frac{1}{2} \sum q_{i j} \xi^i \wedge \xi^j=(dv^1,...,dv^m)\Pi^tQ\bar{\Pi}\begin{pmatrix}
dv^1\\
\vdots\\
dv^m
\end{pmatrix}.
\end{align*}Note that
\begin{align*}
\Pi^tQ\bar{\Pi}=\left(-\frac{\sqrt{-1}}{2}(\operatorname{Im}Z)^{-1},\frac{\sqrt{-1}}{2}\bar{Z}(\operatorname{Im}Z)^{-1}\right)\begin{pmatrix}
0 & 1\\
-1 & 0
\end{pmatrix}\overline{\begin{pmatrix}
-\frac{\sqrt{-1}}{2}(\operatorname{Im}Z)^{-1}\\
\frac{\sqrt{-1}}{2}\bar{Z}(\operatorname{Im}Z)^{-1}
\end{pmatrix}}=-\frac{\sqrt{-1}}{2}(\operatorname{Im}Z)^{-1}.
\end{align*}
Therefore, $\omega=\tilde{\omega}$.

\end{proof}

Note that
\begin{align*}
\nabla_{\partial z_i}\begin{pmatrix}
dv^1\\
\vdots\\
dv^m
\end{pmatrix}=\nabla_{\partial z_i}T\cdot \begin{pmatrix}
\xi^1\\
\vdots\\
\xi^{2m}
\end{pmatrix}=\frac{\partial T}{\partial z_i}\begin{pmatrix}
T\\
\bar{T}
\end{pmatrix}^{-1}\begin{pmatrix}
dv\\
d\bar{v}
\end{pmatrix}
\end{align*}
Therefore, the flat connection induced by $\Lambda$ has Christoffel symbols
\begin{align*}
\Gamma_i(z, v)=\frac{\partial T}{\partial z^i}\binom{T}{\bar{T}}^{-1}\binom{v}{\bar{v}} \in \mathbb{C}^m, \quad i=1, \ldots, \operatorname{dim}_C U
\end{align*}
i.e. the vectors $\left(e_i, \Gamma_i(z, v)\right)$ span the horizontal space at $(z, v) \in U \times \mathbb{C}^m$.

\begin{theorem}\label{semi-flat 1}

Define $H(\varepsilon):=\left(\frac{\varepsilon}{\sqrt{\operatorname{det} Q}}\right)^{\frac{1}{m}} H$, where $H^{-1}:=2 \bar{R}(\operatorname{Im} Z) R^t=i \bar{T} Q^{-1} T^t$. 

(1) If $\Omega=g d z \wedge d v^1 \wedge \ldots \wedge d v^m$ for some holomorphic function $g: U \rightarrow \mathbb{C}$, then the K{\"a}hler form of the hermitian metric $g_{\mathrm{sf}, \varepsilon}$ is given by
\begin{align}\label{semi-flat 2}
\omega_{\mathrm{sf}, \varepsilon}=i|g|^2 \operatorname{det}(H(\varepsilon))^{-1} d z \wedge d \bar{z}+i H(\varepsilon)_{j k}\left(d v^j-\Gamma^j d z\right) \wedge\left(d \bar{v}^k-\bar{\Gamma}^k d \bar{z}\right).
\end{align}
This is a closed form with top power $(m+1)!i^{(m+1)^2} \Omega \wedge \bar{\Omega}$. In particular, $g_{\mathrm{sf}, \varepsilon}$ is a Calabi-Yau metric.

(2) The metric $\omega_{S, \varepsilon}$ on $S$ is K{\"a}hler, and the Ricci form satisfies
\begin{align}\label{semi-flat 3}
\rho\left(\omega_{S, \varepsilon}\right)=-i \partial \bar{\partial} \log \operatorname{det}(\operatorname{Im} Z)=\frac{i}{4}\left((\operatorname{Im} Z)^{a b} d Z_{b c} \wedge(\operatorname{Im} Z)^{c d} d \bar{Z}_{d a}\right)
\end{align}

\end{theorem}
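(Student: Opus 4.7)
The plan is to unpack the definition of the semi-flat metric in the trivialization introduced before the theorem and to verify the stated formulas by direct computation.

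For part (1), the first observation is that the $(1,0)$-forms $\theta^j := dv^j - \Gamma^j dz$ are the $(1,0)$ components of the vertical projection associated with the flat horizontal distribution $\mathcal{H}$: a horizontal lift of $\partial_z$ is $\partial_z + \Gamma^j \partial_{v^j}$, on which $\theta^j$ vanishes, while $\theta^j(\partial_{v^k}) = \delta^j_k$. Consequently the splitting $T^{1,0}X = \mathcal{H}^{1,0}\oplus\mathcal{V}^{1,0}$ diagonalizes $g_{\mathrm{sf},\varepsilon}$ by the definition \eqref{semi-flat 0}, the vertical block is $H(\varepsilon)_{jk}\theta^j\otimes\bar\theta^k$ by the preceding lemma (after the $\varepsilon$-rescaling that normalizes the fiber volume), and the horizontal block is computed by evaluating the formula for $g_{S,\varepsilon}(\partial_z,\partial_{\bar z})$ on the frame $v_i=\partial_{v^i}$ of $E_s$, inserting $\Omega = g\,dz\wedge dv^1\wedge\cdots\wedge dv^m$ in the numerator and $\det(h_\varepsilon(v_i,v_j))=\det H(\varepsilon)$ in the denominator. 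Together these reconstruct \eqref{semi-flat 2}. The top power follows from writing $\omega_{\mathrm{sf},\varepsilon}=A+B$ with $A$ the horizontal $2$-form and $B$ the vertical $(1,1)$-form: since $A^2=0$ and $B^{m+1}=0$, only the binomial term $(m+1)\,A\wedge B^m$ survives, the standard identity $B^m/m!=i^{m^2}\det H(\varepsilon)\,\theta^1\wedge\cdots\wedge\theta^m\wedge\bar\theta^1\wedge\cdots\wedge\bar\theta^m$ holds, and the $\Gamma^j dz$ corrections in the $\theta^j$'s are annihilated when wedged with $dz\wedge d\bar z$; the powers of $i$ collect as $i^{m^2}\cdot i\cdot(-1)^m = i^{(m+1)^2}$, reproducing $(m+1)!\,i^{(m+1)^2}\Omega\wedge\bar\Omega$ and hence exhibiting $g_{\mathrm{sf},\varepsilon}$ as Calabi-Yau.

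The main obstacle in part (1) is to prove $d\omega_{\mathrm{sf},\varepsilon}=0$: although Lemma \ref{Cartan lee} delivers the closedness of the polarization $\omega=iH_{jk}\,dv^j\wedge d\bar v^k$ on $X$, the rescaling by $(\varepsilon/\sqrt{\det Q})^{1/m}$ and, crucially, the twist by $\Gamma$ modify this form and demand a separate argument. The plan is to expand $d\omega_{\mathrm{sf},\varepsilon}$ in the local trivialization, exploit that $\Gamma^j(z,v)$ is $\mathbb{R}$-linear in $(v,\bar v)$ with coefficients holomorphic in $z$ (since $T(z)$ is holomorphic), and verify that $d(|g|^2/\det H(\varepsilon))\wedge dz\wedge d\bar z$ together with the cross-terms from $d\Gamma^j\wedge\bar\theta^k$ cancel against the expansion of $dH(\varepsilon)_{jk}\wedge\theta^j\wedge\bar\theta^k$. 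The cancellation ultimately reflects the closedness of the polarization $\omega$ itself together with the compatibility of $\Gamma$ with the flat $\mathbb{R}$-linear structure on $\Lambda$.

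For part (2), K\"ahlerness of $\omega_{S,\varepsilon}$ is automatic because $S$ is a Riemann surface and every smooth $(1,1)$-form on a curve is closed. From \eqref{semi-flat 2} the local coefficient of $\omega_{S,\varepsilon}$ is $|g|^2/\det H(\varepsilon)$, so $\rho(\omega_{S,\varepsilon})=-i\partial\bar\partial\log(|g|^2/\det H(\varepsilon))$. Since $g$ is holomorphic, $\log|g|^2$ is pluriharmonic, and the identity $\det H^{-1}=2^m|\det R|^2\det(\operatorname{Im}Z)$ together with the holomorphy of $R$ makes $\log|\det R|^2$ pluriharmonic as well. Therefore only the $\det(\operatorname{Im}Z)$ factor survives under $\partial\bar\partial$, giving the first equality $\rho=-i\partial\bar\partial\log\det(\operatorname{Im}Z)$. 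For the explicit second equality \eqref{semi-flat 3}, I would apply $\partial\log\det M=\operatorname{tr}(M^{-1}\partial M)$ and $\bar\partial M^{-1}=-M^{-1}(\bar\partial M)M^{-1}$ to $M=\operatorname{Im}Z$, use $\partial(\operatorname{Im}Z)=\tfrac{1}{2i}dZ$ and $\bar\partial(\operatorname{Im}Z)=-\tfrac{1}{2i}d\bar Z$ (both valid because $Z$ is holomorphic), collect the overall sign $-\tfrac{1}{4}$ from the two factors of $1/(2i)$ together with the anti-commutation of $d\bar Z$ with $dZ$, and rewrite the trace in index form to match \eqref{semi-flat 3}.
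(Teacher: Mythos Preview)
The paper states this theorem without proof (it is quoted as background from Hein's thesis \cite{hein2010gravitational}), so there is no in-paper argument to compare against. Your outline for the formula \eqref{semi-flat 2}, the top-power identity, and part (2) is correct and is the standard computation.

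The one place where your proposal is genuinely incomplete is the closedness of $\omega_{\mathrm{sf},\varepsilon}$. Your plan---expand $d\omega_{\mathrm{sf},\varepsilon}$ in the $(z,v)$-trivialization and chase cancellations between $dH(\varepsilon)_{jk}$, $d\Gamma^j$, and $d(|g|^2/\det H(\varepsilon))$---can be made to work but is laborious, and the phrase ``the cancellation ultimately reflects the closedness of the polarization $\omega$'' is not yet an argument. A much cleaner route, implicit in the paper's setup, is to pass to the flat real frame $\xi^1,\dots,\xi^{2m}$: since the Christoffel symbols were defined precisely so that $dv^j-\Gamma^j dz = T_{ji}\,\xi^i$ (write $v=T x$ with $x$ the flat real fiber coordinates, so $dv = T\,dx + (\partial_z T)x\,dz$ and $\Gamma = (\partial_z T)x$), the vertical piece becomes
\[
iH(\varepsilon)_{jk}\,\theta^j\wedge\bar\theta^k \;=\; iH(\varepsilon)_{jk}T_{ji}\bar T_{kl}\,\xi^i\wedge\xi^l \;=\; \Big(\tfrac{\varepsilon}{\sqrt{\det Q}}\Big)^{1/m}\cdot \tfrac{1}{2}Q_{il}\,\xi^i\wedge\xi^l,
\]
the last equality being exactly the algebraic content of part (4) of the preceding lemma. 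This form has \emph{constant} coefficients in the closed frame $\xi^i$, hence is closed. The horizontal piece $i|g|^2\det H(\varepsilon)^{-1}\,dz\wedge d\bar z$ is a pullback from the curve $S$ and is therefore closed as well. This gives $d\omega_{\mathrm{sf},\varepsilon}=0$ without any cancellation bookkeeping.
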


\begin{corollary}\label{semi-flat 4}

Let $f: X \rightarrow S$ be an elliptic fibration without singular fibers over a Riemann surface, equipped with a holomorphic section $\sigma$. Consider a holomorphic volume form $\Omega$ on $X$ and let $\omega_{\mathrm{sf}, \varepsilon}$ be the semi-flat hyperk{\"a}hler metric on $X$ constructed from $\sigma$ and $\frac{1}{\sqrt{2}} \Omega$ such that the fibers have $\omega_{\mathrm{sf}, \varepsilon}$-area equal to $\varepsilon$. Thus, $\omega_{\mathrm{sf}, \varepsilon}^2=\Omega \wedge \bar{\Omega}$.

Given a domain $U\subset S$ with a holomorphic coordinate $z$, and fixing an isomorphism of elliptic fibrations $\left.X\right|_U \cong\left(U \times \mathbb{C}_v\right) /\left(\mathbb{Z} \tau_1+\mathbb{Z} \tau_2\right)$, where $\tau_1, \tau_2: U \rightarrow \mathbb{C}$ are multi-valued holomorphic functions ensuring that $\left(\tau_1, \tau_2\right)$ is positively oriented and $\sigma$ maps to the zero section. Then $\Omega=g d z \wedge d v$ for some holomorphic function $g: U \rightarrow \mathbb{C}$, and
\begin{align}\label{semi-flat 5}
& \omega_{\mathrm{sf}, \varepsilon}=i|g|^2 \frac{\operatorname{Im}\left(\bar{\tau}_1 \tau_2\right)}{\varepsilon} d z \wedge d \bar{z}+\frac{i}{2} \frac{\varepsilon}{\operatorname{Im}\left(\bar{\tau}_1 \tau_2\right)}(d v-\Gamma d z) \wedge(d \bar{v}-\bar{\Gamma} d \bar{z}); \\
& \Gamma(z, w)=\frac{1}{\operatorname{Im}\left(\bar{\tau}_1 \tau_2\right)}\left(\operatorname{Im}\left(\bar{\tau}_1 v\right) \tau_2^{\prime}-\operatorname{Im}\left(\bar{\tau}_2 v\right) \tau_1^{\prime}\right).
\end{align}

\end{corollary}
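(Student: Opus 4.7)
The plan is to interpret this corollary as the direct specialization of Theorem \ref{semi-flat 1} to $m=1$, so the work is essentially bookkeeping. First I would fix the standard symplectic polarization $Q = \begin{pmatrix} 0 & 1 \\ -1 & 0 \end{pmatrix}$, which has $\det Q = 1$, so that the prefactor $(\varepsilon/\sqrt{\det Q})^{1/m}$ appearing in Theorem \ref{semi-flat 1} simply becomes $\varepsilon$. Since $T=(\tau_1,\tau_2)$ in this case, a one-line computation using $\bar a b - a\bar b = 2i\operatorname{Im}(\bar a b)$ gives $H^{-1} = i\bar T Q^{-1} T^t = 2\operatorname{Im}(\bar\tau_1\tau_2)$, hence $H(\varepsilon) = \varepsilon/(2\operatorname{Im}(\bar\tau_1\tau_2))$.

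Next I would substitute these quantities into formula (\ref{semi-flat 2}), keeping careful track of the $1/\sqrt 2$ in the hypothesis. Writing $\tfrac{1}{\sqrt 2}\Omega = \tilde g\, dz\wedge dv$ gives $|\tilde g|^2 = |g|^2/2$; the horizontal term of (\ref{semi-flat 2}) then becomes $i|\tilde g|^2 \det H(\varepsilon)^{-1}\, dz\wedge d\bar z = i|g|^2 \operatorname{Im}(\bar\tau_1\tau_2)\varepsilon^{-1}\, dz\wedge d\bar z$, and the fiber term is $\tfrac{i\varepsilon}{2\operatorname{Im}(\bar\tau_1\tau_2)}(dv-\Gamma dz)\wedge(d\bar v-\bar\Gamma d\bar z)$, matching (\ref{semi-flat 5}). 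The normalization $\omega_{\mathrm{sf},\varepsilon}^2=\Omega\wedge\bar\Omega$ then follows from Theorem \ref{semi-flat 1}(1): one has $(m+1)!\,i^{(m+1)^2}=2$ for $m=1$, and the factor $1/2$ coming from $\tilde\Omega\wedge\bar{\tilde\Omega} = \tfrac12\Omega\wedge\bar\Omega$ cancels it exactly.

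For the Christoffel symbol I would invoke the formula $\Gamma(z,v) = \frac{\partial T}{\partial z}\binom{T}{\bar T}^{-1}\binom{v}{\bar v}$ preceding Theorem \ref{semi-flat 1}. For $m=1$, the $2\times 2$ matrix $\binom{T}{\bar T}$ has determinant $\tau_1\bar\tau_2 - \tau_2\bar\tau_1 = -2i\operatorname{Im}(\bar\tau_1\tau_2)$, so its inverse is immediate. Multiplying out $(\tau_1',\tau_2')\binom{T}{\bar T}^{-1}(v,\bar v)^t$ and applying $\bar\tau_i v - \tau_i \bar v = 2i\operatorname{Im}(\bar\tau_i v)$ twice, every factor of $i$ cancels and the expression collapses to
\begin{equation*}
\Gamma(z,v) = \frac{1}{\operatorname{Im}(\bar\tau_1\tau_2)}\bigl(\operatorname{Im}(\bar\tau_1 v)\,\tau_2' - \operatorname{Im}(\bar\tau_2 v)\,\tau_1'\bigr),
\end{equation*}
which is the stated formula.

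There is no substantive obstacle in this argument; the proof is pure algebra. The only point worth double-checking is that the chosen $Q$ is compatible with the orientation convention in the corollary: the ordered basis $(\tau_1,\tau_2)$ is positively oriented precisely when $\operatorname{Im}(\bar\tau_1\tau_2) > 0$, which in turn guarantees $H(\varepsilon) > 0$ and that all the formulas above make sense as a Kähler metric rather than merely as a $(1,1)$-form.
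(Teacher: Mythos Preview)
Your proposal is correct and is precisely the intended derivation: the paper states this corollary without proof as the $m=1$ specialization of Theorem~\ref{semi-flat 1}, and your bookkeeping (the choice of $Q$, the computation of $H(\varepsilon)$, the $1/\sqrt{2}$ normalization, and the $2\times 2$ inversion for $\Gamma$) matches exactly the method the paper uses in the proof of the companion Corollary~\ref{semi-flat 6} for $m=2$. There is nothing to add.
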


Especially in our cases, we have

\begin{corollary}\label{semi-flat 6}

Let $f: X \rightarrow S$ be the fiber product of two elliptic fibrations without singular fibers over a Riemann surface, equipped with a holomorphic section $\sigma$. Consider a holomorphic volume form $\Omega$ on $X$ and let $\omega_{\mathrm{sf}, \varepsilon}$ be the semi-flat Calabi-Yau metric on $X$ constructed from $\sigma$ and $ \Omega$ such that the fibers have $\omega_{\mathrm{sf}, \varepsilon}$-area equal to $\varepsilon$. Thus, in particular, $\omega_{\mathrm{sf}, \varepsilon}^3=6i\Omega \wedge \bar{\Omega}$. 
 
 Let $U$ be a domain in $S$, let $z$ be a holomorphic coordinate on $U$, and fix an isomorphism of elliptic fibrations $\left.X\right|_U \cong\left(U \times \mathbb{C}_{v_1}\times \mathbb{C}_{v_2}\right) /\left(\mathbb{Z} \tau_1+\mathbb{Z} \tau_2+\mathbb{Z} \tau_3+\mathbb{Z} \tau_4\right)$ with multi-valued holomorphic functions $\tau_1, \tau_2,\tau_3,\tau_4: U \rightarrow \mathbb{C}$ so that $\left(\tau_1, \tau_2,\tau_3,\tau_4\right)$ is positively oriented and $\sigma$ maps to the zero section. Then $\Omega=g d z \wedge d v_1\wedge dv_2$ for some holomorphic function $g: U \rightarrow \mathbb{C}$, and
\begin{equation}\label{semi-flat 7}
\begin{aligned}
\omega_{\mathrm{sf}, \varepsilon}&=4i|g|^2 \frac{\operatorname{Im}\left(\bar{\tau}_1 \tau_2\right)\operatorname{Im}\left(\bar{\tau}_3 \tau_4\right)}{\varepsilon^2} d z \wedge d \bar{z}\\
&+\frac{i}{2} \frac{\varepsilon}{\operatorname{Im}\left(\bar{\tau}_1 \tau_2\right)}(d v_1-\Gamma^1 d z) \wedge(d \bar{v}_1-\bar{\Gamma}^1 d \bar{z})\\
&+\frac{i}{2} \frac{\varepsilon}{\operatorname{Im}\left(\bar{\tau}_3 \tau_4\right)}(d v_2-\Gamma^2 d z) \wedge(d \bar{v}_2-\bar{\Gamma}^2 d \bar{z})
\end{aligned}
\end{equation}
where
\begin{equation}\label{semi-flat 8}
\begin{aligned}
& \Gamma^1(z, v)=\frac{1}{\operatorname{Im}\left(\bar{\tau}_1 \tau_2\right)}\left(\operatorname{Im}\left(\bar{\tau}_1 v_1\right) \tau_2^{\prime}-\operatorname{Im}\left(\bar{\tau}_2 v_1\right) \tau_1^{\prime}\right);\\
&\Gamma^2(z, v)=\frac{1}{\operatorname{Im}\left(\bar{\tau}_3 \tau_4\right)}\left(\operatorname{Im}\left(\bar{\tau}_3 v_2\right) \tau_4^{\prime}-\operatorname{Im}\left(\bar{\tau}_4 v_2\right) \tau_3^{\prime}\right).
\end{aligned}
\end{equation}

\end{corollary}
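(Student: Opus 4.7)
The strategy is to view $X = X_1 \times_{\mathbb{P}^1} X_2$ (over any simply-connected regular subset $U$) as a smooth torus bundle whose period matrix and polarization both inherit a block-diagonal structure from the two factor elliptic fibrations, and then to read off the semi-flat formula by specializing Theorem \ref{semi-flat 1} at $m=2$.

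First I would fix the basis $(\tau_1,\tau_2,\tau_3,\tau_4)$ of $\Lambda \subset \mathbb{C}^2$ so that $(\tau_1,\tau_2)$ generate the lattice of the first elliptic factor inside $\mathbb{C}_{v_1}$ and $(\tau_3,\tau_4)$ that of the second inside $\mathbb{C}_{v_2}$. Then the period matrix and the natural product polarization read
\begin{equation*}
T=\begin{pmatrix}\tau_1 & \tau_2 & 0 & 0\\ 0 & 0 & \tau_3 & \tau_4\end{pmatrix},\qquad
Q=\begin{pmatrix}Q_0 & 0\\ 0 & Q_0\end{pmatrix},\qquad Q_0=\begin{pmatrix}0 & 1\\ -1 & 0\end{pmatrix},
\end{equation*}
with $\det Q=1$. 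Applying the formula $H^{-1}=i\bar T Q^{-1}T^t$ from Theorem \ref{semi-flat 1} block by block gives
\begin{equation*}
H^{-1}=\mathrm{diag}\bigl(2\,\mathrm{Im}(\bar\tau_1\tau_2),\,2\,\mathrm{Im}(\bar\tau_3\tau_4)\bigr),
\end{equation*}
i.e.\ exactly the single-elliptic computation of Corollary \ref{semi-flat 4} performed twice in parallel. Likewise, since $T$ is block diagonal, the formula $\Gamma_i(z,v)=(\partial T/\partial z^i)\binom{T}{\bar T}^{-1}\binom{v}{\bar v}$ decouples into two independent pieces; each block reduces term by term to the single-elliptic Christoffel symbol and yields precisely the expressions for $\Gamma^1,\Gamma^2$ in \eqref{semi-flat 8}, depending respectively on $v_1$ alone and $v_2$ alone.

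It then remains to substitute $H$, $\Gamma^1$, $\Gamma^2$ and $\Omega=g\,dz\wedge dv^1\wedge dv^2$ into the master formula \eqref{semi-flat 2} to obtain \eqref{semi-flat 7}; the identity $\omega_{\mathrm{sf},\varepsilon}^3 = 6i\,\Omega\wedge\bar\Omega$ is then the specialization of the top-power claim of Theorem \ref{semi-flat 1}(1) at $m=2$, since $(m+1)!\,i^{(m+1)^2}=6i$. The one point that needs care, and which I expect to be the main bookkeeping obstacle, is the normalization of $\varepsilon$: an abelian fiber is four real dimensional, so ``area equal to $\varepsilon$'' is to be interpreted as area $\varepsilon$ per elliptic factor, equivalently fiber volume $\varepsilon^2$. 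Under this convention the rescaling of $H$ prescribed in Theorem \ref{semi-flat 1} becomes $H(\varepsilon)=\varepsilon H$ rather than $\varepsilon^{1/2}H$, and this is precisely what produces the $\varepsilon^2$ in the denominator of the base coefficient $4|g|^2\mathrm{Im}(\bar\tau_1\tau_2)\mathrm{Im}(\bar\tau_3\tau_4)/\varepsilon^2$ appearing in \eqref{semi-flat 7}. Once this convention is fixed, everything else is a routine block-diagonal substitution, and the independence of the construction of the local trivialization is inherited from the analogous statement in Theorem \ref{semi-flat 1} and Corollary \ref{semi-flat 4}.
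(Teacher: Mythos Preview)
Your proposal is correct and follows essentially the same approach as the paper: both specialize Theorem \ref{semi-flat 1} to $m=2$ with the block-diagonal period matrix $T$ coming from the fiber product, and both arrive at the formulas by reducing to two independent copies of the single-elliptic computation. The paper carries this out by explicitly writing down the permutation matrix $S$, the factors $R,Z$, and the $4\times4$ inverse $\binom{T}{\bar T}^{-1}$ via Gaussian elimination, whereas you shortcut this by invoking block-diagonality directly; your observation about the $\varepsilon$-normalization (area $\varepsilon$ per elliptic factor, hence fiber volume $\varepsilon^2$, forcing $H(\varepsilon)=\varepsilon H$) is exactly the bookkeeping point the paper leaves implicit when it simply says ``plugging all these data into the semi-flat metric formula.''
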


\begin{proof}

In this case $T=\begin{pmatrix}
\tau_1 &\tau_2 & 0 & 0\\
0 & 0 & \tau_3 & \tau_4
\end{pmatrix}.$
Note
\begin{align*}
\begin{pmatrix}
\tau_1 &\tau_2 & 0 & 0\\
0 & 0 & \tau_3 & \tau_4.
\end{pmatrix}\cdot \begin{pmatrix}
1 & 0 & 0 & 0\\
0 & 0 & 1 & 0\\
0 & 1 & 0 & 0\\
0 & 0 & 0 & 1\\
\end{pmatrix}=\begin{pmatrix}
\tau_1 & 0\\
0 & \tau_3
\end{pmatrix}\begin{pmatrix}
1 & 0 & \frac{\tau_2}{\tau_1} & 0\\
0 & 1 & 0 &\frac{\tau_4}{\tau_3}
\end{pmatrix}.
\end{align*}
Thus
\begin{align*}
S=\begin{pmatrix}
1 & 0 & 0 & 0\\
0 & 0 & 1 & 0\\
0 & 1 & 0 & 0\\
0 & 0 & 0 & 1\\
\end{pmatrix},\; R=\begin{pmatrix}
\tau_1 & 0\\
0 & \tau_3
\end{pmatrix},\; Z=\begin{pmatrix}
 \frac{\tau_2}{\tau_1} & 0\\
0 &\frac{\tau_4}{\tau_3}
\end{pmatrix}.
\end{align*}
Hence
\begin{align*}
H^{-1}=\begin{pmatrix}
2\operatorname{Im}(\bar{\tau}_1\tau_2) & 0\\
0 & 2\operatorname{Im}(\bar{\tau}_3\tau_4)
\end{pmatrix},\; H=\begin{pmatrix}
\frac{1}{2\operatorname{Im}(\bar{\tau}_1\tau_2)} & 0\\
0 & \frac{1}{2\operatorname{Im}(\bar{\tau}_3\tau_4)}
\end{pmatrix},\; \operatorname{det}H^{-1}=4\operatorname{Im}\left(\bar{\tau}_1 \tau_2\right)\operatorname{Im}\left(\bar{\tau}_3 \tau_4\right)
\end{align*}
By Gaussian Elimination algorithm, we can get the inverse matrix
\begin{align*}
\begin{pmatrix}
T\\
\bar{T}
\end{pmatrix}^{-1}=\begin{pmatrix}
\frac{i\bar{\tau}_2}{2\operatorname{Im}(\bar{\tau}_1\tau_2)} & 0 & \frac{-i\tau_2}{2\operatorname{Im}(\bar{\tau}_1\tau_2)} & 0\\
\frac{-i\bar{\tau}_1}{2\operatorname{Im}(\bar{\tau}_1\tau_2)} & 0 & \frac{i\tau_1}{2\operatorname{Im}(\bar{\tau}_1\tau_2)} & 0\\
0 & \frac{i\bar{\tau}_4}{2\operatorname{Im}(\bar{\tau}_3\tau_4)} & 0 & \frac{-i\tau_4}{2\operatorname{Im}(\bar{\tau}_3\tau_4)}\\
0 & \frac{-i\bar{\tau}_3}{2\operatorname{Im}(\bar{\tau}_1\tau_2)} & 0 & \frac{i\tau_3}{2\operatorname{Im}(\bar{\tau}_3\tau_4)}\\
\end{pmatrix}
\end{align*}
Then by direct computation, we get
\begin{align*}
\Gamma(z,w)&=\begin{pmatrix}
\tau_1' &\tau_2' & 0 & 0\\
0 & 0 & \tau_3' & \tau_4'.
\end{pmatrix}\begin{pmatrix}
\frac{i\bar{\tau}_2}{2\operatorname{Im}(\bar{\tau}_1\tau_2)} & 0 & \frac{-i\tau_2}{2\operatorname{Im}(\bar{\tau}_1\tau_2)} & 0\\
\frac{-i\bar{\tau}_1}{2\operatorname{Im}(\bar{\tau}_1\tau_2)} & 0 & \frac{i\tau_1}{2\operatorname{Im}(\bar{\tau}_1\tau_2)} & 0\\
0 & \frac{i\bar{\tau}_4}{2\operatorname{Im}(\bar{\tau}_3\tau_4)} & 0 & \frac{-i\tau_4}{2\operatorname{Im}(\bar{\tau}_3\tau_4)}\\
0 & \frac{-i\bar{\tau}_3}{2\operatorname{Im}(\bar{\tau}_1\tau_2)} & 0 & \frac{i\tau_3}{2\operatorname{Im}(\bar{\tau}_3\tau_4)}\\
\end{pmatrix}\begin{pmatrix}
v_1\\
v_2\\
\bar{v}_1\\
\bar{v}_2
\end{pmatrix}\\
&=\begin{pmatrix}
&\frac{1}{\operatorname{Im}\left(\bar{\tau}_1 \tau_2\right)}\left(\operatorname{Im}\left(\bar{\tau}_1 v_1\right) \tau_2^{\prime}-\operatorname{Im}\left(\bar{\tau}_2 v_1\right) \tau_1^{\prime}\right)\\
&\frac{1}{\operatorname{Im}\left(\bar{\tau}_3 \tau_4\right)}\left(\operatorname{Im}\left(\bar{\tau}_3 v_2\right) \tau_2^{\prime}-\operatorname{Im}\left(\bar{\tau}_4 v_2\right) \tau_3^{\prime}\right)
\end{pmatrix}=\begin{pmatrix}
    \Gamma^1(z,v)\\
    \Gamma^2(z,v)
\end{pmatrix}.
\end{align*}
Plugging all these data into the semi-flat metric formula, we get the desired formula.

\end{proof}

\section{Construction of isotrivial fibrations}\label{isotrivial models}


In this section, we discuss the isotrivial construction on a quotient model.

\begin{theorem}\label{main 1}

Let $A$ be an abelian surface. Let $\mathbb{Z}/k\mathbb{Z}$ acts on $\mathbb{P}^1_{[t:s]}\times A$ in such a manner that

(a) $\mathbb{Z}_k$ acts on $\mathbb{P}^1$ by $t\mapsto \zeta_k^{-1} t$, so that the fixed loci are at $t=0$ and $s=0$, where $\zeta_k=\exp\left(\frac{2\pi i}{k}\right)$ is a primitive $k$-th unit root;

(b) $\mathbb{Z}_k$ induces an automorphism $\tau$ of $A$ such that $\tau^*\Omega_A=\zeta_k\Omega_A$, where $\Omega_A$ is the holomorphic volume form on $A$.

Let $X$ be the crepant resolution of the quotient space $(\mathbb{P}^1_{[t:s]}\times A)/\mathbb{Z}_k$ at $t=0$. There is a natural abelian fibration stucture $f:X\to \mathbb{P}^1$. Let $F=f^*(\{\infty\})$ (viewed as the pullback of a divisor) be the special fiber at $t=\infty$. Then

(1) $F$ is an irreducible but non reduced divisor with multiplicity $k$;

(2) $K_X=-\frac{2}{k}[F]$ and there exists a holomorphic volume form $\Omega$ on $M=X\backslash F$ with a second order pole on the reduced space $F_{\operatorname{red}}$.

(3) For significant large $\lambda$, there exists an appropriately chosen explicit initial metric $\omega_0$ on the non compact manifold $M$ and a smooth function $u$ with Schwartz decay such that the metric
\begin{equation*}
\omega_{\operatorname{CY}}=\omega_0+i\partial\bar{\partial}u
\end{equation*}satisfies the complex Monge-Ampère equation
\begin{equation*}
\omega_{\operatorname{CY}}^3=\lambda \Omega\wedge\bar{\Omega},
\end{equation*}
and the infinity of $\omega_{\operatorname{CY}}$ models on the ALG ansatz with angle $\theta=\frac{2\pi }{k}$.

\end{theorem}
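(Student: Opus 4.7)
The plan is to exploit the isotrivial product structure on the cover $\mathbb{P}^1 \times A$: every relevant object is constructed there $\mathbb{Z}_k$-equivariantly and pushed down, so there is essentially no semi-flat integration to do.

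\textbf{Parts (1) and (2).} I would work in the chart $s = 1/t$ near $t = \infty$, where $\mathbb{Z}_k$ acts by $s \mapsto \zeta_k s$, so that the induced map to the base $\mathbb{P}^1/\mathbb{Z}_k$ is the $k$-fold ramified cover $w := s^k$. Since $X \to Y := (\mathbb{P}^1 \times A)/\mathbb{Z}_k$ is an isomorphism away from the fibre over $t = 0$, the ramification of $w = s^k$ shows that $f^{-1}(\{w = 0\})$ is scheme-theoretically $k$ times the reduced divisor $F_{\mathrm{red}} \cong A/\tau$, establishing (1). For (2), set $\Omega_0 := dt \wedge \Omega_A$ on $\mathbb{P}^1 \times A$. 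Under the generator the characters $dt \mapsto \zeta_k^{-1} dt$ and $\tau^*\Omega_A = \zeta_k \Omega_A$ cancel, so $\Omega_0$ is $\mathbb{Z}_k$-invariant and descends to a meromorphic form $\Omega$ on $Y$. In the $s$-chart $\Omega = -s^{-2}\, ds \wedge \Omega_A$; after passing to slice coordinates on the quotient away from $\mathrm{Fix}(\tau)$, one reads off that $\Omega$ has a pole of order exactly two along $F_{\mathrm{red}}$ and no other zeros or poles. Hence $K_Y = -2[F_{\mathrm{red}}]$, and since $X \to Y$ is crepant and localized at $t = 0$, we conclude $K_X = -2[F_{\mathrm{red}}] = -(2/k)[F]$, with $\Omega$ lifting to the required volume form on $M$.

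\textbf{Initial metric (Part (3)).} Isotriviality collapses the semi-flat ansatz of Section \ref{torus bundles} near $F$ to the descent of a product metric. Choose a $\tau$-invariant flat K\"ahler metric $\omega_A$ on $A$ (which exists by averaging any flat K\"ahler metric over the finite group $\langle \tau \rangle$), normalized so that $\omega_A^2 = c\,\Omega_A \wedge \bar\Omega_A$, and form
$$\omega_{\mathrm{prod}} := \tfrac{i}{2}\, dt \wedge d\bar t + \omega_A$$
on $\mathbb{C}_t \times A$. Both summands are $\mathbb{Z}_k$-invariant, so $\omega_{\mathrm{prod}}$ descends to a flat K\"ahler metric on a neighborhood of $F$ in $X$ whose base factor, in the coordinate $W = t^k$, is precisely the flat two-cone of cone angle $2\pi/k$. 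A direct wedge-product computation using $\omega_A^3 = 0$ yields $\omega_{\mathrm{prod}}^3 = \lambda\,\Omega_0 \wedge \bar\Omega_0$ for an explicit $\lambda > 0$. Extending $\omega_{\mathrm{prod}}$ inward by a radial cutoff in the ALG coordinate $W$, and applying the $\partial\bar\partial$-lemma of Section \ref{perturbation} to correct the cutoff error to an exact form, then produces a smooth K\"ahler form $\omega_0$ on $M$ that agrees with the descent of $\omega_{\mathrm{prod}}$ outside a compact set; in particular $\omega_0^3 = \lambda\,\Omega \wedge \bar\Omega$ outside a compact set.

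\textbf{Solving Monge-Amp\`ere and main obstacle.} Solving $(\omega_0 + i\partial\bar\partial u)^3 = \lambda\,\Omega \wedge \bar\Omega$ then becomes a direct application of the Tian-Yau-Hein package of Appendix \ref{Tian-Yau-Hein's package}: the defect $f := 1 - \omega_0^3/(\lambda\,\Omega \wedge \bar\Omega)$ has compact support, so its weighted $L^p$ norms are trivially finite, and the ALG geometry of $(M, \omega_0)$ at infinity supplies polynomial volume growth together with the weighted Sobolev and Poincar\'e inequalities that TYH requires. The package delivers a smooth $u$ with Schwartz decay in the ALG radial coordinate, and $\omega_{\mathrm{CY}} := \omega_0 + i\partial\bar\partial u$ is the Calabi-Yau metric sought; by the decay of $u$ it is asymptotic to the flat ALG model with cone angle $2\pi/k$. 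The main technical obstacle is checking that TYH applies uniformly in the presence of the quotient singularities of $X$ at $\{t = \infty\} \times \mathrm{Fix}(\tau)$: since these are confined to $F$ and hence excluded from $M$, the task reduces to verifying that the descended ALG ansatz extends smoothly across a full neighborhood of $F_{\mathrm{red}}$ inside $M$ and that no pathology near these singular points obstructs the global construction of $\omega_0$; both points follow from the explicit slice coordinates used in (1) and (2). The phrase "sufficiently large $\lambda$" in the statement only affects an overall scaling and plays no structural role.
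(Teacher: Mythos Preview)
Your treatment of (1) and (2) is essentially the paper's: descend $dt\wedge\Omega_A$ and read off the pole order along $F_{\mathrm{red}}$ in slice coordinates on the quotient.

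For (3) there is a genuine gap. The descended product metric $\omega_{\mathrm{prod}}$ lives on the \emph{quotient} $(\mathbb{C}_t\times A)/\mathbb{Z}_k$, and this is exactly where $X$ is \emph{not} isomorphic to the quotient: at $t=0$ the crepant resolution has replaced the singular points by exceptional divisors, and the flat metric does not extend across them as a K\"ahler form. Your sentence ``extending $\omega_{\mathrm{prod}}$ inward by a radial cutoff \ldots and applying the $\partial\bar\partial$-lemma of Section~\ref{perturbation}'' presupposes a global K\"ahler form on $M$ to glue with, but never produces one. The paper's Part (3) is almost entirely devoted to this point: it builds $\omega_0$ by a Kummer-type construction, gluing generalized Eguchi--Hanson metrics on each $\mathcal{O}_{\mathbb{P}^2}(-3)$ (Case~5) or invoking Joyce's QALE gluing along one-dimensional fixed loci (Case~6) into the descended flat metric. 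This is the main technical content, and it is what makes the resulting $\omega_0$ flat outside a compact set so that Proposition~\ref{schwartz} applies. You have also misidentified the obstacle: the quotient singularities at $t=\infty$ are contained in $F$ and excised with it; the issue is entirely at $t=0$.

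A secondary gap: compact support of the defect $f$ does \emph{not} give $\int_M(e^f-1)\omega_0^3=0$, which Theorem~\ref{TYH package} requires since the ALG geometry is $\mathrm{SOB}(2)$. The paper achieves this via the adjustment procedure of Section~\ref{perturbation}, and this is exactly where the ``sufficiently large $\lambda$'' enters; it is not a mere overall scaling.
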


\begin{remark}


(i) The existence of a crepant resolution for the singularities at $f^{-1}(0)$ follows from the assumption $\tau^*\Omega_A=\zeta_k \Omega_A$ and Roan's general theorem \cite{roan1996minimal},  which asserts that if $G$ is a finite subgroup of $SL(3,\mathbb{C})$, then the quotient singularity $\mathbb{C}^3/G$ admits a crepant resolution. However, this result does not extend directly to higher dimensions without modification.

(ii) The theorem can be generalized by replacing the abelian surface with an arbitrary abelian variety, provided we can perform a crepant resolution at the singularities at $t=0$. The proof remains unchanged. However, in higher dimensions, a natural description of such resolutions is lacking.

(iii) Although the theorem builds on substantial prior work and its proof is not overly complex, it is listed separately because it offers valuable insights into constructing global models with negative canonical bundles.

\end{remark}

To maintain consistency with our approach, we employ the semi-flat metric introduced in Theorem \ref{semi-flat 1} as our model at infinity. However, upon closer examination through direct computation, it becomes evident that in the context of the quotient model, the semi-flat metric aligns perfectly with the flat metric inherited from $\mathbb{C}_t\times A$. This alignment facilitates a Kummer-type construction in this specific scenario.

More precisely, following a crepant resolution, we identify a metric defined in the vicinity of the exceptional divisors and expressed via a potential function. This metric extends the concept of the Eguchi-Hanson metric, which is traditionally defined on the blow-up of an $A_2$-singularity. Utilizing this metric for gluing operations offers two significant advantages:
\begin{itemize}

\item Proximity to Flatness: The metric closely resembles the flat metric, ensuring that the resulting glued metric remains positive definite post-gluing.

\item Enhanced Decay Estimates: Upon solving the non-compact Monge-Ampère equation, the final Calabi-Yau metric $\omega_{\operatorname{CY}}$ exhibits a remarkable proximity to our initial ansatz. Consequently, the decay estimates significantly surpass those obtained through the polynomial decay derived from the Tian-Yau-Hein framework, providing a more favorable outcome.

\end{itemize}

Referring to Kenji Ueno's paper (\cite{ueno1971fiber},\cite{ueno1972fiber}), we list the possible constructions such that there exists crepant resolution at $t=0$ (we omit some cases when the actions are the same while the lattices of abelian surfaces are different, because the calculating processes are almost the same):
\begin{itemize}

\item \textbf{Case 1}: $\mathbb{Z}_2\times (\mathbb{P}^1_{[t:s]}\times E_1\times E_2)\to \mathbb{P}^1_{[t:s]}\times E_1\times E_2,\; (-1,(t,w_1,w_2))\mapsto ((- t,-w_1, w_2))$, where $E_i$ is an arbitrary elliptic curve. Around $t=0$ and $t=\infty$, this corresponds to $\mathrm{II}$\ding{172}(a) type;

\item \textbf{Case 2}: $\mathbb{Z}_3\times (\mathbb{P}^1_{[t:s]}\times E_1\times E_2)\to \mathbb{P}^1_{[t:s]}\times E_1\times E_2,\; (\zeta_3,(t,w_1,w_2))\mapsto ((\zeta_3 t,\zeta_3^2 w_1, w_2))$, where $E_1$ is an elliptic curve corresponding to a hexagonal lattice, $E_2$ is an arbitrary elliptic curve. Around $t=0$, this corresponds to $\mathrm{III}$\ding{173}(b) type; around $t=\infty$, this corresponds to $\mathrm{III}$\ding{173}(a) type;

\item \textbf{Case 3}: $\mathbb{Z}_4\times (\mathbb{P}^1_{[t:s]}\times E_1\times E_2)\to \mathbb{P}^1_{[t:s]}\times E_1\times E_2,\; (i,(t,w_1,w_2))\mapsto ((i t,-i w_1, w_2))$, where $E_1$ is an elliptic curve corresponding to a square lattice, $E_2$ is an arbitrary elliptic curve. Around $t=0$, this corresponds to $\mathrm{III}$\ding{176}(a) type; around $t=\infty$, this corresponds to $\mathrm{III}$\ding{176}(b) type;

\item \textbf{Case 4}: $\mathbb{Z}_6\times (\mathbb{P}^1_{[t:s]}\times E_1\times E_2)\to \mathbb{P}^1_{[t:s]}\times E\times E,\; (\zeta_6,(t,w_1,w_2))\mapsto ((\zeta_6 t,\zeta_6^2 w_1, w_2))$, where $E_1$ is an elliptic curve corresponding to a hexagonal lattice, $E_2$ is an arbitrary elliptic curve. Around $t=0$, this corresponds to $\mathrm{III}$\ding{175}(a) type; around $t=\infty$, this corresponds to $\mathrm{III}$\ding{175}(b) type in Ueno's classification;

\item \textbf{Case 5}: $\mathbb{Z}_3\times (\mathbb{P}^1_{[t:s]}\times E\times E)\to \mathbb{P}^1_{[t:s]}\times E\times E,\; (\zeta_3,(t,w_1,w_2))\mapsto ((\zeta_3 t,\zeta_3 w_1,\zeta_3 w_2))$, where $E$ is an elliptic curve corresponding to a hexagonal lattice. Around $t=0$, this corresponds to $\mathrm{II}$\ding{174}(a) type; around $t=\infty$, this corresponds to $\mathrm{II}$\ding{174}(b) type;

\item \textbf{Case 6}: $\mathbb{Z}_4\times (\mathbb{P}^1_{[t:s]}\times E_1\times E_2)\to \mathbb{P}^1_{[t:s]}\times E_1\times E_2,\; (i,(t,w_1,w_2))\mapsto ((i t,i w_1, -w_2))$, where $E_1$ is an elliptic curve corresponding to a square lattice, $E_2$ is an arbitrary elliptic curve. Around $t=0$, this corresponds to $\mathrm{III}$\ding{177}(a) type; around $t=\infty$, this corresponds to $\mathrm{III}$\ding{177}(b) type;

\item \textbf{Case 7}: $\mathbb{Z}_6\times (\mathbb{P}^1_{[t:s]}\times E_1\times E_2)\to \mathbb{P}^1_{[t:s]}\times E_1\times E_2,\; (\zeta_6,(t,w_1,w_2))\mapsto ((\zeta_6 t,\zeta_3 w_1, -w_2))$, where $E_1$ is an elliptic curve corresponding to a hexagonal lattice, $E_2$ is an arbitrary elliptic curve. Around $t=0$, this corresponds to $\mathrm{III}$\ding{174}(a) type; around $t=\infty$, this corresponds to $\mathrm{III}$\ding{174}(b) type;

\item \textbf{Case 8}: $\mathbb{Z}_6\times (\mathbb{P}^1_{[t:s]}\times E\times E)\to \mathbb{P}^1_{[t:s]}\times E\times E,\; (\zeta_6,(t,w_1,w_2))\mapsto ((\zeta_6 t,\zeta_6 w_1, \zeta_3^2w_2))$, where $E$ is an elliptic curve corresponding to a hexagonal lattice. Unfortunately, this type of local model is not listed in Ueno's paper.

\item \textbf{Case 9}:  $\mathbb{Z}_{12}\times (\mathbb{P}^1_{[t:s]}\times E_1\times E_2)\to \mathbb{P}^1_{[t:s]}\times E_1\times E_2,\; (\zeta_{12},(t,w_1,w_2))\mapsto ((\zeta_{12} t,\zeta_6 w_1, -iw_2))$, where $E_1$ is an elliptic curve corresponding to a hexagonal lattice, $E_2$ is an elliptic curve corresponding to a square lattice. The singular fibers at $t=0,\infty$ belongs to class $\mathrm{IV}$\ding{178}.


\item \textbf{Case 10}: $\mathbb{Z}_{12}\times (\mathbb{P}^1_{[t:s]}\times E_1\times E_2)\to \mathbb{P}^1_{[t:s]}\times E_1\times E_2,\; (\zeta_{12},(t,w_1,w_2))\mapsto ((\zeta_{12} t,i w_1, \zeta_3^2 w_2))$, where $E_1$ is an elliptic curve corresponding to a square lattice, $E_2$ is an elliptic curve corresponding to a hexagonal lattice. The singular fibers at $t=0,\infty$ belongs to class $\mathrm{IV}$\ding{177}.


\item \textbf{Case 11}: $\mathbb{Z}_{5}\times (\mathbb{P}^1_{[t:s]}\times A)\to \mathbb{P}^1_{[t:s]}\times A,\; (\zeta_5,(t,w_1,w_2))\mapsto ((\zeta_5 t,\zeta_5 w_1, \zeta_5^3 w_2))$, where $A=\mathbb{C}^2/\Lambda$, where $\Lambda=\langle (1,1),(\zeta_5,\zeta_5^2),(\zeta_5^2,\zeta_5^4),(\zeta_5^3,\zeta_5)\rangle$ is an abelian surface with automorphism group $\mathbb{Z}/10\mathbb{Z}$. The singular fibers at $t=0,\infty$ belongs to class $\mathrm{IV}$\ding{177}.


\item \textbf{Case 12}: $\mathbb{Z}_6\times (\mathbb{P}^1_{[t:s]}\times E\times E)\to \mathbb{P}^1_{[t:s]}\times E\times E,\; (\zeta_6,(t,w_1,w_2))\mapsto ((\zeta_6 t,\zeta_3^2 w_2, \zeta_3^2 w_1))$, where $E$ is an elliptic curve corresponding to a hexagonal lattice. The singular fibers at $t=0,\infty$ belongs to class $\mathrm{IV}$\ding{173}.


\item \textbf{Case 13}: $\mathbb{Z}_6\times (\mathbb{P}^1_{[t:s]}\times E\times E)\to \mathbb{P}^1_{[t:s]}\times E\times E,\; (\zeta_6,(t,w_1,w_2))\mapsto ((\zeta_6 t,\zeta_6 w_2, \zeta_6 w_1))$, where $E$ is an elliptic curve corresponding to a hexagonal lattice. The singular fibers at $t=0,\infty$ belongs to class $\mathrm{IV}$\ding{173}.


\end{itemize}

\begin{remark}

First four cases are trivial because after a minimal resolution, the singular fibers in these cases are found to be the product of Kodaira-type singular fibers with a smooth elliptic curve, and we only need to take the product metric of Hein's constructions and the flat metric on elliptic curve.

\end{remark}

Now we give a proof to (1) and (2) of Theorem \ref{main 1}.

\begin{proof}

For the case $1\sim 4$, everything is trivial. For the case $5\sim 13$, the proofs are very similar. We only discuss case 5 in detail. That is we have to prove 
\begin{equation}
	K_X=-\frac23[F],
\end{equation}where $F=f^{*}(\infty)$ is the scheme theoretical fiber.

On $\mathbb{C}_t\times E\times E$, we have natural holomorphic volume form $dt\wedge dw_1\wedge dw_2$. This volume form is invariant under the $\mathbb{Z}_3$-action and hence descends naturally to $(\mathbb{P}^1_{[t:s]}\times E_{w_1}\times E_{w_2})/\mathbb{Z}_3$. By the basic property of crepant resolution, the descended volume form can be lifted to a volume form on the resolution space $X$. We just need to calculate the order of $\Omega$ along $F=f^{*}(\infty)$.

Locally the singularities at $t=\infty$ are modeled on $\mathbb{C}^3/\mathbb{Z}_3$, where $\mathbb{Z}_3$ acts on $\mathbb{C}^3$ by $\zeta\cdot (s,w_1,w_2)=(\zeta^2 s,\zeta w_1,\zeta w_2)$. Hence 
\begin{align*}
\mathbb{C}^3/\mathbb{Z}_3&=\operatorname{Spec}\mathbb{C}[s,w_1,w_2]^{\mathbb{Z}_3}=\operatorname{Spec}\mathbb{C}[s^3,w_1^3,w_2^3,sw_1,sw_2,w_1^2w_2,w_1w_2^2]=V(I),
\end{align*}where $I$ corresponds to the ideal of $\mathbb{C}[x_1,...,x_7]$ generated by the relations given by $x_1=s^3,...,x_7=w_1w_2^2$. Note $V(I)\to \Delta,\; (x_1,...,x_7)\mapsto x_1$ gives out the local fibration structure of $X\to \mathbb{P}^1$ around $s=0$. From this, the scheme theoretical fiber $f^{*}(\infty)$ at $t=\infty$ is not reduced. Actually, it's an irreducible smooth fiber with multiplicity $3$.

Now take $x_2,x_3,x_4$ to be the local coordinate of $V$. Suppose $\Omega=\pi^*(hdx_2\wedge dx_3\wedge dx_4)$ for some meromorphic function $h$ on $V$, that is 
\begin{align*}
\frac{1}{s^2}ds\wedge dw_1\wedge dw_2=\pi^*(hdx_2\wedge dx_3\wedge dx_4)=hd(w_1^3)\wedge d(w_2^3)\wedge d(sw_1)=9hw_1^3w_2^2ds\wedge dw_1 \wedge dw_2,
\end{align*}hence $h=\frac{1}{9s^2w_1^3w_2^2}=\frac{1}{9x_4^2x_7}$. Note $x_4=0$ refers to the reduced scheme $F_{\operatorname{red}}$ of $F$. Hence $K_X=-\frac23 [F]$.
\end{proof}

Next we compute the ansatz given by Theorem \ref{semi-flat 1}.

\begin{lemma}\label{flatness}
For the isotrivial case, the ansatz given in Theorem \ref{semi-flat 1} coincides with the flat metric descended from $\mathbb{C}_t\times A$.
\end{lemma}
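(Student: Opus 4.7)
The plan is to pass to the natural $k$-fold branched cover of the quotient base, where the fibration becomes literally trivial, and observe that the semi-flat formula collapses to a manifest product metric there.

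First, let $t$ be the standard coordinate so that the quotient map $t\mapsto z=t^k$ presents the base of our fibration, near a generic point, as a branched cover; the pullback of $X\setminus F$ along this cover is $\mathbb{C}^\times_t\times A$ with the projection onto the first factor. With respect to the fixed lattice $\Lambda\subset\mathbb{C}^2$ defining $A$, the period matrix $T(t)$ is independent of $t$, so the Christoffel symbols $\Gamma^j$ appearing in Theorem \ref{semi-flat 1} vanish identically on the cover, and the Hermitian matrix $H(\varepsilon)$ is a constant.

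Second, apply the formula of Theorem \ref{semi-flat 1} to this trivialized data using $t$ itself as the base coordinate. By hypothesis on the $\mathbb{Z}_k$-action, the form $dt\wedge dw_1\wedge dw_2$ is invariant and is the pullback of the volume form $\Omega$ on $X\setminus F$; hence in the notation of Theorem \ref{semi-flat 1} the coefficient is $g=1$. Substituting $g=1$, $\Gamma^j=0$, and $H(\varepsilon)$ constant into (\ref{semi-flat 2}) yields
\begin{equation*}
\omega_{\mathrm{sf},\varepsilon}=i\det(H(\varepsilon))^{-1}\,dt\wedge d\bar t+iH(\varepsilon)_{jk}\,dw^j\wedge d\bar w^k,
\end{equation*}
which is manifestly the product of a flat metric on $\mathbb{C}_t$ with a flat metric on $A$.

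Finally, this product metric is $\mathbb{Z}_k$-invariant: the action rotates the first factor (an isometry of $|dt|^2$), and the induced action $\tau$ on $A$ is a holomorphic automorphism preserving both the lattice and the cohomology class of the polarization, hence acts by an isometry on the flat fiber metric determined by $H(\varepsilon)$. Therefore the product metric descends to $X\setminus F$, and by naturality of the semi-flat construction under the étale pullback (away from the branch locus) it agrees there with the ansatz of Theorem \ref{semi-flat 1}. There is no substantive obstacle here: isotriviality forces the period matrix to be constant on the cover, and the rest is direct substitution into the explicit formula.
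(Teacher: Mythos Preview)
Your proof is correct and takes a genuinely different route from the paper's. The paper argues by direct computation in one representative case (Case~13): it writes down an explicit fiber-preserving isomorphism $\tilde\Phi: U'/{\sim_\mathscr{A}} \to (\Delta^*_z \times \mathbb{C}^2)/\Lambda(z)$, computes the multivalued periods $\tau_1,\ldots,\tau_4$ in the $z$-coordinate, works out the Christoffel symbols $\Gamma^1,\Gamma^2$ (which are \emph{nonzero} in those coordinates), plugs everything into formula~(\ref{semi-flat 2}), and then performs an explicit coordinate change to $(\alpha,\beta_1,\beta_2)$ in which the metric visibly becomes the flat product on a cone of angle $\frac{2\pi}{k}$.

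Your approach sidesteps all of this by passing to the $k$-fold cover $\mathbb{C}^\times_t\times A$ where the fibration is literally trivial, so that $T$ is constant and $\Gamma^j=0$ automatically, and then invoking naturality of the semi-flat construction under \'etale base change. This naturality is not stated as a lemma in the paper but follows directly from the intrinsic definition~(\ref{semi-flat 0}): the fiber metrics $g_{s,\varepsilon}$, the base metric $g_{S,\varepsilon}$ induced by $\Omega$ and $h_\varepsilon$, and the Gauss--Manin horizontal distribution all pull back correctly along an \'etale map of bases.

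One point you should make more explicit: your claim that $\tau$ preserves the polarization class on $A$ is not a general fact about automorphisms satisfying $\tau^*\Omega_A=\zeta_k\Omega_A$. Rather, it is built into the setup: applying Theorem~\ref{semi-flat 1} to $X$ requires a constant polarization on $X$ in the sense of Lemma~\ref{Cartan lee}, and such a polarization necessarily pulls back to a monodromy-invariant, hence $\tau$-invariant, class on $A$. With that clarification your argument is complete. The paper's longer computation does buy something extra---the explicit identification of the cone angle and the normalizing constant $\alpha_0$ in the asymptotic model---but for the bare statement of Lemma~\ref{flatness} your route is cleaner and handles all cases at once rather than one representative.
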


\begin{proof}

We only explain \textbf{Case 13} in detail: Consider a neighborhood $U$ of $F$ in $X$. The pullback $U'$ of $U|_{\Delta^*}$ under $z=s^6$ can be written as $(\Delta^*_s\times \mathbb{C}_{w_1}\times \mathbb{C}_{w_2})/(\mathbb{Z}+\zeta_3\mathbb{Z}+\mathbb{Z}+\zeta_3\mathbb{Z})$ and thus the central fiber at $s=0$ is a smooth torus $\mathbb{C}^2/(\mathbb{Z}+\zeta_3\mathbb{Z}+\mathbb{Z}+\zeta_3\mathbb{Z})$. The deck transformation of the monodromy on $U'$ is generated by 
\begin{equation*}
\mathscr{A}:U'\to U',\; (s,w_1,w_2)\mapsto (\zeta_6^5s,\zeta_6 w_2,\zeta_6 w_1).
\end{equation*}Now the fiberwise linear map
\begin{equation*}
\Phi:\Delta^*\times \mathbb{C}\times \mathbb{C}\to \Delta^*\times \mathbb{C}\times \mathbb{C},\; (s,w_1,w_2)\mapsto (s^6,s(w_1+w_2),s^4(w_1-w_2))=(z,v_1,v_2)
\end{equation*}factors through the monodromy action of $A$. Hence we have an isomorphism of fibrations
\begin{align*}
\tilde{\Phi}:U|_{\Delta^*}\to (\Delta_z^*\times \mathbb{C}_{v_1}\times \mathbb{C}_{v_2})/(\mathbb{Z}\tau_1+\mathbb{Z}\tau_2+\mathbb{Z}\tau_3+\mathbb{Z}\tau_4)
\end{align*}with multi-valued generators
\begin{align*}
&\tau_1(z)=\operatorname{pr}_2\tilde{\Phi}\left(z^{\frac13},1,0\right)=z^{\frac16};\\
&\tau_2(z)=\operatorname{pr}_2\tilde{\Phi}\left(z^{\frac13},\zeta_3,0\right)=\zeta_3z^{\frac16};\\
&\tau_3(z)=\operatorname{pr}_3\tilde{\Phi}\left(z^{\frac13},0,1\right)=z^{\frac23};\\
&\tau_4(z)=\operatorname{pr}_3\tilde{\Phi}\left(z^{\frac13},0,\zeta_3\right)=\zeta_3z^{\frac23}.
\end{align*}Hence
\begin{equation*}
T=\begin{pmatrix}
z^{\frac16}&\zeta_3z^{\frac16}&0&0\\
0&0&z^{\frac23}&\zeta_3z^{\frac23}
\end{pmatrix}
\end{equation*}

Now we need to determine the multiplicity $N$ of $\tilde{\Phi}(dz\wedge dv_1\wedge dv_2)$ along the central fiber $F$. Note that we have the following commutative diagram of fibrations:
\begin{displaymath}
\xymatrix{U\ar[d]\ar[rd]^{\tilde{\Phi}}&\\
V\ar[r]& (\Delta_z^*\times \mathbb{C}^2_{v_1,v_2})/\Lambda(z)},    
\end{displaymath}where the lattice $\Lambda(z)=\mathbb{Z}\tau_1+\mathbb{Z}\tau_2+\mathbb{Z}\tau_3+\mathbb{Z}\tau_4$.

Note that 
\begin{equation*}
\mathbb{C}^3/\mathbb{Z}^6=\operatorname{Spec}\mathbb{C}[s^6,sw_1+sw_2,w_1^3w_2^3,w_1^6+w_2^6,s^2w_1w_2,s^4w_1-s^4w_2,\cdots],
\end{equation*}Take $x_1=sw_1+sw_2$, $x_2=w_1^3w_2^3$, $x_3=w_1^6+w_2^6$ to be the coordinate.

Direct computation shows that
\begin{align*}
\tilde{\Phi}^*(dz\wedge dv_1\wedge dv_2)&=d(s^6)\wedge d(sw_1+sw_2)\wedge d(s^4w_1-s^4w_2)\\
&=-12s^{10}ds\wedge dw_1\wedge dw_2\\
&=x_1^{10}m(x_2,x_3)dx_1\wedge dx_2\wedge dx_3,
\end{align*}where $m(x_2,x_3)$ is rational function. Hence the multiplicity of $\tilde{\Phi}^*(dz\wedge dv)$ along $F_{\operatorname{red}}$ is $10$. 

Assume $\Omega=\frac{1}{s^2}ds\wedge dw_1\wedge dw_2=hdx_1\wedge dx_2\wedge dx_3$, then $h=\frac{n(w_1,w_2)}{s^2}ds\wedge dw_1\wedge dw_2$, where $n(x_2,x_3)$ is rational function. Therefore $\Omega$ has a pole of order $2$ along $F_{\operatorname{red}}$, that is $K_X=-\frac{2}{6}[F]$.

 Thus we should take $g(z)=-\frac{1}{12z^2}$ so that the pullback of $g(z)dz\wedge dv_1\wedge dv_2$ is just the holomorphic volume $\Omega=\frac{1}{s^2}ds\wedge dw_1\wedge dw_2$ descended to $X$. 


Note that 
\begin{equation*}
\omega=\frac{i\varepsilon}{\sqrt{3}}(dw_1\wedge d\bar{w}_1+dw_2\wedge d\bar{w}_2)
\end{equation*}can be descended to $U|_{\Delta^*}$ and become a constant polarization such that the volume of the fiber is $\varepsilon$. Change variable by
\begin{equation*}
s=z^{\frac16},\; w_1=\frac12\left(z^{-\frac16}v_1+z^{-\frac23}v_2\right),\; w_2=\frac12\left(z^{-\frac16}v_1-z^{-\frac23}v_2\right),
\end{equation*}then we have
\begin{equation*}
\frac{i\varepsilon}{\sqrt{3}}\left(dw_1\wedge d\bar{w}_1+dw_2\wedge d\bar{w}_2\right)|_{\hat{F}}=\frac{i\varepsilon}{2\sqrt{3}}|z|^{-\frac13}dv_1\wedge d\bar{v}_1+\frac{i\varepsilon}{2\sqrt{3}}|z|^{-\frac43}dv_2\wedge d\bar{v}_2,
\end{equation*}where $\hat{F}$ is an arbitrary smooth fiber in $U|_{\Delta^*}$. From this we get
\begin{equation*}
H(\varepsilon)_{11}=\frac{\varepsilon}{2\sqrt{3}}|z|^{-\frac13},\; H(\varepsilon)_{22}=\frac{\varepsilon}{2\sqrt{3}}|z|^{-\frac43},\; H(\varepsilon)_{12}=H(\varepsilon)_{21}=0,
\end{equation*}which gives $\det(H(\varepsilon))^{-1}=\frac{12}{\varepsilon^2}|z|^{\frac53}$. 

For the Christoffel symbols, we compute as follows
\begin{align*}
\Gamma(z,v)&=\begin{pmatrix}
\Gamma^1(z,v_1,v_2)\\
\Gamma^2(z,v_1,v_2)
\end{pmatrix} =\frac{\partial T}{\partial z}\binom{T}{\bar{T}}^{-1}\binom{v}{\bar{v}}\\
&=\begin{pmatrix}
&\frac{1}{\operatorname{Im}\left(\bar{\tau}_1 \tau_2\right)}\left(\operatorname{Im}\left(\bar{\tau}_1 v_1\right) \tau_2^{\prime}-\operatorname{Im}\left(\bar{\tau}_2 v_1\right) \tau_1^{\prime}\right)\\
&\frac{1}{\operatorname{Im}\left(\bar{\tau}_3 \tau_4\right)}\left(\operatorname{Im}\left(\bar{\tau}_3 v_2\right) \tau_2^{\prime}-\operatorname{Im}\left(\bar{\tau}_4 v_2\right) \tau_3^{\prime}\right)
\end{pmatrix}\\
&=\begin{pmatrix}
\frac{v_1}{6z}\\
\frac{2v_2}{3z}
\end{pmatrix}.
\end{align*}

We put all the data into formula \eqref{semi-flat 2}, then
\begin{align*}
\omega_{\mathrm{sf}, \varepsilon}&=\frac{i|z|^{\frac53}}{12\varepsilon^2}\frac{dz\wedge d\bar{z}}{|z|^4}\\
&+\frac{i\varepsilon}{2\sqrt{3}|z|^{\frac13}}\left(d v_1-\frac16 \frac{v_1}{z} d z\right) \wedge\left(d \bar{v}_1-\frac16 \frac{\bar{v}_1}{\bar{z}} d \bar{z}\right)\\
&+\frac{i\varepsilon}{2\sqrt{3}|z|^{\frac43}}\left(d v_2-\frac23 \frac{v_2}{z} d z\right) \wedge\left(d \bar{v}_2-\frac23 \frac{\bar{v}_2}{\bar{z}} d \bar{z}\right)
\end{align*}

Change coordinates on $\Delta\backslash[0,1)$ by setting $z=\left(\frac{\alpha_0}{\alpha}\right)^{6}$ and $v_1=\left(\frac{\alpha_0}{\alpha}\right)^{1}(\beta_1+\beta_2)$, $v_2=\left(\frac{\alpha_0}{\alpha}\right)^{4}(\beta_1-\beta_2)$. Then $\alpha$ ranges over the sector $|\alpha|>\alpha_0$, $0<\arg \alpha<\frac{2\pi }{6}$, and $\beta_i$ move in the torus $\mathbb{C}/\mathbb{Z}+\zeta_3\mathbb{Z}$. In the new coordinate, with a carefully chosen constant $\alpha_0=-\frac{\sqrt{6}}{\varepsilon}$, the above ansatz looks like
\begin{equation}
\omega_{\operatorname{sf},\varepsilon}=\frac{i}{2}\left\{ d\alpha\wedge d\bar{\alpha}+\frac{\varepsilon}{\sqrt{3}}(d\beta_1\wedge d\bar{\beta}_1+d\beta_2\wedge d\bar{\beta}_2)\right\}.
\end{equation}This is automatically a flat metric. So the semi-flat ansatz is just a complex $2$-dimensional torus fibration over a cone whose angle is $\frac{1}{3}$. It's clear that the semi-flat ansatz coincides with the flat metric descended from $\mathbb{C}_t\times A$.

\end{proof}

\begin{lemma}

There exists a k{\"a}hler metric $\omega_{0,\varepsilon}$ on $X\backslash F$ which coincides with the pullback of $\omega_{\operatorname{sf},\varepsilon}$ outside a neighborhood of the $t=0$ fiber.

\end{lemma}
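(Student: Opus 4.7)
The plan is to construct $\omega_{0,\varepsilon}$ by a standard Kähler gluing on potentials, taking advantage of the fact that the only place a correction is needed is a neighborhood of the exceptional locus of the crepant resolution $\pi:X\to (\mathbb{P}^1\times A)/\mathbb{Z}_k$ above $t=0$. Outside that locus $\pi$ is a biholomorphism onto the smooth part of the quotient orbifold, so $\pi^*\omega_{\operatorname{sf},\varepsilon}$ is an honest Kähler form on $X\setminus F$ minus a small neighborhood of the exceptional set. By Lemma \ref{flatness} the semi-flat form coincides with the descent of the flat product metric from $\mathbb{C}_t\times A$, and in particular admits an explicit $\mathbb{Z}_k$-invariant Euclidean potential $\varphi_{\operatorname{sf}}$ that descends to the quotient and pulls back to $X$ wherever $\pi$ is a local isomorphism.

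Near $t=0$ I would use the Eguchi-Hanson type model flagged in the discussion preceding the lemma. On a chart modelled on the crepant resolution of $\mathbb{C}^3/\mathbb{Z}_k$, one writes down an explicit smooth, strictly plurisubharmonic $\varphi_{\operatorname{loc}}$ of Calabi/Burns type whose Kähler form $i\partial\bar\partial\varphi_{\operatorname{loc}}$ extends smoothly across the exceptional divisors and is ALE-asymptotic to the pullback of the flat orbifold potential $\varphi_{\operatorname{sf}}$. The crucial output of this construction is that the difference $\varphi_{\operatorname{loc}}-\varphi_{\operatorname{sf}}$, together with all its derivatives, decays at a definite polynomial rate as one moves away from the exceptional set in the local chart.

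The gluing then proceeds as follows. Fix nested open neighborhoods $V_1\Subset V_2$ of the exceptional fiber above $t=0$ and a smooth cut-off $\chi$ with $\chi\equiv 1$ on $V_1$ and $\chi\equiv 0$ outside $V_2$, and set
\begin{equation*}
\omega_{0,\varepsilon}=i\partial\bar\partial\bigl(\chi\,\varphi_{\operatorname{loc}}+(1-\chi)\,\varphi_{\operatorname{sf}}\bigr)
\end{equation*}
on $V_2\setminus V_1$, extended by $i\partial\bar\partial\varphi_{\operatorname{loc}}$ on $V_1$ and by $\pi^*\omega_{\operatorname{sf},\varepsilon}$ on $X\setminus(F\cup V_2)$; by construction this agrees with $\pi^*\omega_{\operatorname{sf},\varepsilon}$ outside $V_2$, as required. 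The main obstacle is positive-definiteness across the transition annulus, where the gluing produces an error term of the schematic form $i\partial\bar\partial\chi\cdot(\varphi_{\operatorname{loc}}-\varphi_{\operatorname{sf}})+i\partial\chi\wedge\bar\partial(\varphi_{\operatorname{loc}}-\varphi_{\operatorname{sf}})+\textup{c.c.}$ However, by enlarging $V_2$ enough (so that the annulus sits at large values of the coordinate controlling distance from the exceptional set) the polynomial decay of $\varphi_{\operatorname{loc}}-\varphi_{\operatorname{sf}}$ and its first two derivatives makes this error arbitrarily small relative to the flat background, whose Hessian is uniformly positive. The resulting form is then a smooth, globally positive Kähler metric on $X\setminus F$ with the stated agreement property.
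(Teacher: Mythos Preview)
Your approach is essentially what the paper does in the isolated-singularity cases such as Case~5, where the $\mathbb{Z}_3$-action on $\mathbb{C}_t\times A$ has nine isolated fixed points over $t=0$ and each local model is the crepant resolution of $\mathbb{C}^3/\mathbb{Z}_3$. There the paper writes down the explicit generalized Eguchi--Hanson potential $f_a(u)$, glues it to the flat potential $u$ via $\Phi_a(u)=u+\chi(u)(f_a(u)-u)$, and takes the parameter $a$ small so that $|g_a-g_{\operatorname{euc}}|_{C^k}\leq C_k a^2$ guarantees positivity; this is equivalent to your ``enlarging $V_2$'' in the local ALE chart.

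The gap is that in the remaining cases (Case~6 onward) the fixed locus of the $\mathbb{Z}_k$-action on the $t=0$ slice is \emph{not} zero-dimensional: it contains curves $F_i$ (copies of elliptic curves), and the quotient model near a generic point of such a curve is $\mathbb{C}\times(\mathbb{C}^2/\mathbb{Z}_m)$ rather than an isolated $\mathbb{C}^3/\mathbb{Z}_k$. A single ``chart modelled on the crepant resolution of $\mathbb{C}^3/\mathbb{Z}_k$'' does not cover the exceptional set, and there is no single Calabi--Burns $\varphi_{\operatorname{loc}}$ to cut off. The paper treats this in two steps: first, using that the one-dimensional fixed curves have trivial normal bundle, it takes the product of the $2$-dimensional Eguchi--Hanson metric on $\widetilde{\mathbb{C}^2/\mathbb{Z}_2}$ with the flat metric along the curve and glues that to the flat background (this product is $\mathbb{Z}_k$-invariant and descends to the quotient); second, near the isolated $\mathbb{Z}_k$-fixed points sitting on these curves, where the stabilizer jumps, it appeals to Joyce's quasi-ALE construction (Theorem~4.7 of \cite{joyce2001quasi}) rather than an explicit potential. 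Your outline would need this two-tier structure---and the observation about trivial normal bundles---to cover the full statement.
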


\begin{proof}

For simplicity, we only treat case 5 and case 6 in detail.

\textbf{Case 5:} In this case, there are only isolated singularities. Here we have $9$ singularities of $A_2$-type on the $t=0$ fiber. After crepant resolution, we get $9$ exceptional divisors. The local neighborhoods of the $9$ exceptional divisors are modeled on $\mathcal{O}_{\mathbb{P}^2}(-3)$, and the exceptional divisors correspond to the the zero section. From chapter 4 of \cite{lye2019stable}, we know that by solving an ODE, we can construct a generalized Eguchi-Hanson metric on the Crepant resolution of $\mathbb{C}^3/\mathbb{Z}^3$ whose action is given by $(\zeta, (t,w_1,w_2))\mapsto (\zeta t,\zeta w_1,\zeta w_2)$. Since Eguchi-Hanson metric is very close to the euclidean metric, we can imitate chapter 5 of \cite{lye2019stable} by using a cutoff function to glue the $9$ Eguchi-Hanson metrics to the descended flat metric $\omega_{\mathrm{sf},\varepsilon}$ to get a K{\"a}hler metric $\omega_{0,\varepsilon}$. Note that $\omega_{0,\varepsilon}$ only changes in a neighborhood of the $9$ exceptional divisors and remains unchanged near $t=\infty$.

To be more precise, we have natural identifications $(\mathbb{C}^3\backslash\{0\})/\mathbb{Z}_3\cong \mathcal{O}_{\mathbb{P}^2}(-3)\backslash \mathbb{P}^2$ and $\operatorname{Bl}_0(\mathbb{C}^3\backslash \mathbb{Z}_3)\cong \mathcal{O}_{\mathbb{P}^2}(-3)\cong \operatorname{Bl}_0\mathbb{C}^3/\mathbb{Z}_3$. The Eguchi-Hanson metric on $(\mathbb{C}^3\backslash\{0\})/\mathbb{Z}_3$ is given by 
\begin{equation*}
g_{i\bar{j}}=\sqrt[3]{1+\frac{a^3}{u^3}}\left(\delta_{ij}-\frac{a^3}{a^3+u^3}\frac{\bar{z}_iz_j}{u}\right),
\end{equation*}for some parameter $a>0$, $u=|z|^2$. Note that this metric is invariant under $\mathbb{Z}_3$-action, so it's well-defined on $(\mathbb{C}^3\backslash\{0\})/\mathbb{Z}_3$. There is a global K{\"a}hler potential for this metric: 
\begin{equation*}
f_a(u)=\sqrt[3]{a^3+u^3}+\frac{a}{3}\sum\limits_{j=0}^2\zeta^j\log\left(\sqrt[3]{1+\frac{u^3}{a^3}}-\zeta^j\right).
\end{equation*}This metric can be extended to a metric on $\operatorname{Bl}_0(\mathbb{C}^3/ \mathbb{Z}_3)\cong \mathcal{O}_{\mathbb{P}^2}(-3)$. Let $\chi:[0,\infty)\to \mathbb{R}$ be a smooth cutoff function such that $\chi(u)=1$, $\forall u\leq 1$, $\chi(u)=0$, $\forall u\geq 1+\delta$, $\chi^{(k)}(u)=0$ for $u=1,1+\delta$, $\forall k\geq 1$. Then $\Phi_a(u)=u+\chi(u)(f_a(u)-u)$ defines a potential on $\operatorname{Bl}_0(\mathbb{C}^3/ \mathbb{Z}_3)$. The k{\"a}hler metric $g_a$ associated to this K{\"a}hler potential satisfies $|g_a-g_{\operatorname{euc}}|_{C^k}\leq C_ka^2$, where $C_k$ is a constant independent of $a$. So if we take $a$ sufficiently small, $g_a$ is indeed a K{\"a}hler metric. By definition of $f_a$, we know in a neighborhood of $0$, the metric is Eguchi-Hanson, while outside the neighborhood the metric is the flat one. We can carry out this process around the nine singularities in Case 5.

For general cases, the singularities may not be isolated.

\textbf{Case 6:} First, we determine fixed locus at $t=0$. Note $\mathbb{Z}_4$-action is generated by $\rho:(t,w_1,w_2)\mapsto (it,iw_1,-w_2)$. So $\langle \rho^2\rangle$ form a $\mathbb{Z}_2$-subgroup. 

For the $\mathbb{Z}_2$ action, the fixed loci are given by
\begin{align*}
F_1=\left\{(0,0,z_1)\right\},\; F_2=\left\{\left(0,\frac12+\frac i2,z_2\right)\right\},\;F_3= \left\{\left(0,\frac12,z_3\right)\right\},\; F_4=\left\{\left(0,\frac i2,z_4\right)\right\},
\end{align*}where $z_1,z_2,z_3,z_4$ ranges in $E_2$. Note that the normal bundles of $F_i$ are all trivial. Under the $\mathbb{Z}_4$-action, $F_1$ and $F_2$ turn into rational curves $\tilde{F}_1$, $\tilde{F}_2$, while $F_3$ and $F_4$ correspond to the same torus $\tilde{F}_3$. 

For the $\mathbb{Z}_4$ action, there are eight  fixed points
\begin{align*}
&P_1=(0,0,0),\; P_2=\left(0,0,\frac12\right),\; P_3=\left(0,0,\frac i2\right),\; P_4=\left(0,0,\frac12+\frac i2\right);\\
&P_5=\left(0,\frac12+\frac i2,0\right),\; P_6=\left(0,\frac12+\frac i2,\frac12\right),\; P_7=\left(0,\frac12+\frac i2,\frac i2\right),\; P_8=\left(0,\frac12+\frac i2,\frac12+\frac i2\right).
\end{align*}The first four points belong to $F_1$, while the latter four points belong to $F_2$. In a small neighborhood $U_i$ of each $P_i$, the $Y=(\mathbb{C}\times E_1\times E_2)/\mathbb{Z}_4$ is locally isomorphic to $\mathbb{C}^3/\mathbb{Z}_4$. Now we take a crepant resolution $f:X\to Y$. So $f^{-1}(U_i)$ is isomorphic to an open subset $V_i$ of a crepant resolution of $\mathbb{C}^3/\mathbb{Z}_4$. Choose a small open neighborhood $U$ of $\tilde{F}_1$, so that $f^{-1}\left(U\backslash \bigcup\limits_{i=1}^4U_i\right)=f^{-1}(U)\backslash \bigcup\limits_{i=1}^4 V_i$ has product structure, i.e. we have the following commutative diagram
\begin{displaymath}
\xymatrix{ F_1\times \widetilde{(\mathbb{C}^2/\mathbb{Z}_2) } \ar[d] &   \widetilde{W}  \ar@{_{(}->}[l]  \ar[r]\ar[d]& f^{-1}(U)\backslash \bigcup\limits_{i=1}^4 V_i \ar[d]\\
F_1\times (\mathbb{C}^2/\mathbb{Z}_2) &  W   \ar@{_{(}->}[l]    \ar[r]&  U\backslash \bigcup\limits_{i=1}^4 U_i   },
\end{displaymath}where $W$ is a suitable open set in $F_1\times (\mathbb{C}^2/\mathbb{Z}_2)$ such that the right block is a cartesian diagram, $\widetilde{(\mathbb{C}^2/\mathbb{Z}_2)}\cong \mathcal{O}_{\mathbb{P}^1}(-2)$ is the crepant resolution of $\mathbb{C}^2/\mathbb{Z}_2$. $W$ can be viewed as the quotient of the normal bundle of $F_1$ under the $\mathbb{Z}_2$-action. So we can descend the flat k{\"a}hler metric $\omega_0$ on $\mathbb{C}\times E_1\times E_2$ to $W\backslash (F_1\times \{0\})$. Take the Eguchi-Hanson metric $\omega_{\operatorname{EH}}$ on $\widetilde{(\mathbb{C}^2/\mathbb{Z}_2) }$ and glue $\omega_{\operatorname{EH}}\oplus h_{F_1}$ with $\omega_0$ by using cutoff function as shown in Case 5 to a K{\"a}hler metric $\omega_1$, where $h_{F_1}$ is the flat metric on torus. Note that the above argument is based on the fact that the normal bundle of $F_1$ is trivial. Since the Eguchi-Hanson metric and flat metric are both invariant under $\mathbb{Z}_4$-action, so we can descend $\omega_1$ to $f^{-1}(U)\backslash \bigcup\limits_{i=1}^4 V_i$. Now around $f^{-1}(P_i)$, we can apply the proof of theorem 4.7 in \cite{joyce2001quasi} to get a K{\"a}hler metric in $V_i$ which coincides with $\omega_1$ outside $V_i$. 

The metric around $\tilde{F}_2$ can be handled identically as $\tilde{F}_1$, while the metric around $\tilde{F}_3$ can simply be taken as $\omega_1$. Note that these three metrics coincide with flat metric outside a suitable neighborhood, so they can be glued together as a k{\"a}hler metric on the crepant resolution $X$.

A similar argument applies to other cases because all the dimension $1$ fixed loci have trivial normal bundles.


\end{proof}

We can then adjust the resulting global ansatz metric to meet the requirements of the Tian-Yau-Hein method: since $\omega_{0,\varepsilon}$ is flat at infinity, Lemma \ref{QuasiAtlas} ensures the existence of quasi-coordinate charts. Lemma \ref{flatness} shows that $\omega_{0,\varepsilon}$ satisfies the SOB(2) condition. Using the adjustment procedure from Section \ref{perturbation}, we can modify $\omega_{0,\varepsilon}$ to obtain a new ansatz metric $\omega'_{0,\varepsilon}$ which is flat at infinity and satisfies the integrability condition $\int_M (e^f - 1)(\omega'_{0,\varepsilon})^3 = 0$. Therefore, we are able to apply the Tian-Yau-Hein method (Theorem \ref{TYH package}) to perturb the ansatz to a Calabi-Yau metric, which concludes the proof of Theorem \ref{main 1}.

From Proposition 1.6 in \cite{wang2022ricci}, we can get a good decay estimate in our isotrivial construction.

\begin{proposition}[Schwartz Decay]\label{schwartz}

Let $u$ be a smooth solution of the complex Monge-Ampère equation $(\omega'_{0,\varepsilon}+i\partial\bar{\partial}u)^3=C{\omega'_{0,\varepsilon}}^{3}$ given by Tian-Yau-Hein package, then for any $\beta\in \mathbb{R}^+$ and $k\in\mathbb{N}$, we have
\begin{equation*}
\left|\nabla^k\partial\bar{\partial}u\right|_{\omega'_{0,\varepsilon}}=O\left(\frac{1}{r^{\beta+k}}\right).
\end{equation*}This kind of decay is called Schwartz decay.

\end{proposition}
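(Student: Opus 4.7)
The plan is to adapt the bootstrap of Proposition 1.6 in \cite{wang2022ricci} to the present isotrivial setting. The crucial input is Lemma \ref{flatness}: after the adjustment of Section \ref{perturbation}, the background $\omega'_{0,\varepsilon}$ coincides identically with the flat ALG model $\omega_{\operatorname{sf},\varepsilon}$ outside a compact subset of $M$, and $\Omega\wedge\bar{\Omega}$ is a constant multiple of $(\omega'_{0,\varepsilon})^3$ on this flat end. Consequently the Monge-Amp\`ere equation reduces there to the homogeneous equation
\begin{equation*}
(\omega_{\operatorname{sf},\varepsilon}+i\partial\bar{\partial} u)^3=\omega_{\operatorname{sf},\varepsilon}^3,
\end{equation*}
or equivalently, after expanding the determinant in flat coordinates,
\begin{equation*}
\Delta_{\operatorname{flat}} u=Q(\partial\bar{\partial} u),
\end{equation*}
where $Q$ is a polynomial whose lowest-order term is quadratic in $\partial\bar{\partial} u$.

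The second ingredient is the initial polynomial decay $|\nabla^{k}u|_{\omega'_{0,\varepsilon}}=O(r^{-\delta-k})$ for some $\delta>0$ and all $k$, guaranteed by Theorem \ref{TYH package}. To upgrade this to Schwartz decay I would decompose $u$ in Fourier modes $u_{n}$ along the $T^{4}$-fibers of the flat model $S(\theta,r_{0})\times T^{4}$ used in coordinates $(\alpha,\beta_{1},\beta_{2})$. For each non-zero frequency $n$ the linear part of the equation takes the form $(\partial_{\alpha}\bar{\partial}_{\alpha}-c|n|^{2})u_{n}=f_{n}$ on the base sector, with a strictly positive mass term, and a modified-Bessel barrier argument combined with interior Schauder estimates then yields $|\nabla^{k}u_{n}|=O(e^{-c'r})$ for all $k$ and all $n\neq 0$. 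Hence only the zero Fourier mode $u_{0}$, which is a function on the sector $S(\theta,r_{0})\subset\mathbb{C}$, can obstruct Schwartz decay.

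For the zero mode one iterates a single improvement step. If $|\nabla^{k}u_{0}|=O(r^{-\alpha-k})$ for all $k$ at some stage, then because $Q$ is at least quadratic in $\partial\bar{\partial} u$, the zero-mode component of the nonlinearity satisfies $|\nabla^{k}Q_{0}|=O(r^{-2\alpha-4-k})$. Solving $\Delta_{\operatorname{flat}}u_{0}=Q_{0}$ on the two-dimensional sector by a Mellin–Fourier expansion in the angular eigenfunctions compatible with the ALG monodromy, and subtracting off any slowly decaying homogeneous Laplace contribution (which is absorbed into the interior matching), yields the improved estimate $|\nabla^{k}u_{0}|=O(r^{-2\alpha-2-k})$; this is the standard 2D identity $\Delta(r^{-\gamma})=\gamma^{2}r^{-\gamma-2}$ at work. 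Iterating the map $\alpha\mapsto 2\alpha+2$ starting from $\alpha=\delta$ drives $\alpha$ to infinity after finitely many steps, producing decay of $\partial\bar{\partial}u$ to any prescribed polynomial order, together with derivatives by elliptic regularity at each stage.

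The main technical obstacle is precisely the zero-mode step: one must verify that the inversion of the two-dimensional flat Laplacian on the sector respects the monodromy-compatible identifications coming from the ALG angle $\theta=2\pi/k$, and that the homogeneous harmonic residues on this cone can always be chosen to decay strictly faster than any already-achieved rate so that the iteration does not stall at a finite exponent. This is exactly the content of Wang's Proposition 1.6, which applies verbatim once the flatness-at-infinity, the polynomial-decay seed, and the quasi-coordinate atlas (supplied by Lemma \ref{QuasiAtlas}) are in place; the present isotrivial setting is in fact simpler than the asymptotically flat setting of \cite{wang2022ricci} because the background is literally flat on the end rather than only asymptotic to a flat model.
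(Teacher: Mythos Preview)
The paper gives no proof of its own for this proposition: it simply records that the Schwartz decay follows from Proposition~1.6 in \cite{wang2022ricci}, which applies because in the isotrivial setting the background metric is literally flat on the end (Lemma~\ref{flatness}). Your proposal invokes exactly the same reference and the same flatness input, and then goes further by outlining the Fourier-mode bootstrap that underlies Wang's result; so your approach is the paper's approach, only with more detail supplied.
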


To conclude this section, we discuss those cases not admitting crepant resolutions. According to \cite{crauder1994minimal}, in Ueno's list, there are still three singular fibers that have smooth relatively minimal models, i.e $\mathrm{II}$\ding{173}(b), $\mathrm{II}$\ding{175}(b), $\mathrm{III}$\ding{172}(b). In these three cases, after the canonical resolution constructed by Ueno, we have to do contraction and simple flip so that the resulting model is relatively minimal. Nevertheless, we can still descend a holomorphic volume form onto these spaces. This is somewhat similar to (2) in Theorem \ref{main 1}.

We discuss the case $\mathrm{II}$\ding{173}(b) in detail.

Consider the action $\mathbb{Z}_4\times (\mathbb{P}^1_{[t:s]}\times E\times E)\to \mathbb{P}^1_{[t:s]}\times E\times E,\; (i,(t,w_1,w_2))\mapsto (i t,i w_1,i w_2)$, where $E$ is an elliptic curve corresponding to a square lattice. Around $t=0$, this corresponds to $\mathrm{II}$\ding{173}(b) type; around $t=\infty$, this corresponds to $\mathrm{II}$\ding{173}(a) type. 

Now consider the rational form $\Omega=tdt\wedge dw_1\wedge dw_2$ on $\mathbb{P}_{[t:s]}\times E\times E$. This form is invariant under the $\mathbb{Z}_4$-action, so it can be descended to the quotient space. From the proof of theorem 1.1 in \cite{crauder1994minimal}, we know the descended form can be lifted to the relatively minimal model $X$ and has no zero or pole except at $t=\infty$. We still denote this holomorphic volume form on $X$ by $\Omega$. By a similar calculation in the previous paragraph, the fiber $F$ at $t=\infty$ is of multiplicity 4, and $\Omega$ has a pole of order $2$ along $F_{\mathrm{red}}$. Then we can calculate the ansatz in the coordinate $(z,v_1,v_2)=(s^4,s^3v_1,s^3v_2)$, which will be a flat cone metric with angle $\frac14$. The essential difficulty is that we could not carry out the gluing process as in the crepant resolution case. 
Nevertheless, we still expect there exist such ALG K{\"a}hler Ricci flat metrics. 

In the next section, we turn to some non-isotrivial construction, where such difficulties arsing from gluing process could be avoided.

\section{Construction of non-isotrivial models}\label{non-isotrivial models}

In this section, we construct non-isotrivial models via fiber product.

\subsection{Elaboration on the main Theorem \ref{main 0}}\label{elaboration}

\begin{theorem}[Main theorem]\label{main 2}

Let $\pi_i:X_i\to \mathbb{P}^1$, $i=1,2$ be two rational elliptic fibrations. Specify one particular singular fiber $F_i$ on each $X_i$, $i=1,2$. Let the two singular fibers $F_1,F_2$ project to the same point on $\mathbb{P}^1$ while all the other singular fibers to different points. Take the fiber product $f:X=X_1\times_{\mathbb{P}^1}X_2\to \mathbb{P}^1$. Now $X$ is a singular variety with all the singular loci contained in the singular fiber $F=F_1\times F_2$. Let $M=X\backslash F$. We have:

(1) If the monodromy of both $F_1,F_2$ are finite, then
\begin{itemize}

\item (1a) In the punctured neighborhood of $F$, $M$ has a local quotient model as $(\Delta^*_s\times E_{1,w_1}\times E_{2,w_2})/\sim$, where $\sim$ is given by the deck transformation of the monodromy action and has the form
\begin{equation*}
\mathscr{A}:(s,w_1,w_2)\mapsto \left(\zeta_k s,\zeta_k^{\alpha}h_1(s)w_1,\zeta_k^{\beta}h_2(s)w_2\right),
\end{equation*}where $h_i(s)$ is determined by the monodromy action of the elliptic fibration and $1\leq \alpha,\beta<k$.

\item (1b) We can naturally compactify the fibration $M\to \mathbb{C}$ into a normal projective variety $\widetilde{X}$ fibered over $\mathbb{P}^1$ such that \begin{equation*}
K_{\widetilde{X}}=\frac{k-\alpha-\beta-1}{k}[\widetilde{F}],
\end{equation*}where $\widetilde{F}$ is an arbitrary fiber of $\widetilde{X}\to \mathbb{P}^1$.

\item (1c) When $\alpha+\beta>k$, there exists an ALG Ricci flat K{\"a}hler metric on $X\backslash F$ with angle $\theta=\frac{2(\alpha+\beta-k)\pi}{k}$.

\item (1d) When $\alpha+\beta=k$, there exists an ALH Ricci flat K{\"a}hler metric on $X\backslash F$.

\end{itemize}

(2) If $F_i$ is of $I_{b_i}^*$ type, then

(2a) We can naturally compactify the fibration $M\to \mathbb{C}$ into a normal projective variety $\widetilde{X}$ fibered over $\mathbb{P}^1$ such that \begin{equation*}
K_{\widetilde{X}}=-\frac12[\widetilde{F}],
\end{equation*}where $\widetilde{F}$ is an arbitrary fiber of $\widetilde{X}\to \mathbb{P}^1$.

(2b) There exists a complete Calabi-Yau metric $\omega_{\operatorname{CY}}$ on $M$ satisfying $|B(x_0,s)|\sim s^{\frac32}$ for $s>>1$ and the unique tangent cone at infinity is $\mathbb{R}^+$.

(3) If $F_1$ is of $I_{b}^*$ type and $F_2$ is of $\mathrm{II}^*$ or $\mathrm{III}^*$ or $\mathrm{IV}^*$ type, then 

(3a) We can naturally compactify the fibration $M\to \mathbb{C}$ into a normal projective variety $\widetilde{X}$ fibered over $\mathbb{P}^1$ such that \begin{equation*}
K_{\widetilde{X}}=-\frac12 [\widetilde{F}],\; -\frac12 [\widetilde{F}],\; -\frac13 [\widetilde{F}],
\end{equation*}corresponding to $\mathrm{II}^*$, $\mathrm{III}^*$ and $\mathrm{IV}^*$ type respectively, where $\widetilde{F}$ is an arbitrary fiber of $\widetilde{X}\to \mathbb{P}^1$.

(3b) There exists a complete Calabi-Yau metric $\omega_{\operatorname{CY}}$ on $M$ satisfying $|B(x_0,s)|\sim s^{2}$ for $s>>1$ and the unique tangent cone at infinity is a metric cone of angle $\frac{2\pi}{3}$, $\frac{\pi}{2}$ and $\frac{\pi}{3}$ corresponding to $\mathrm{II}^*$, $\mathrm{III}^*$ and $\mathrm{IV}^*$ type respectively.

\end{theorem}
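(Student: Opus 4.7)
The plan is to treat the three cases uniformly in five steps: first I compactify $M\to\mathbb{C}$ by adjoining a fiber at $\infty\in\mathbb{P}^1$, obtaining a normal projective threefold $\widetilde{X}$ and computing $K_{\widetilde{X}}$ via the canonical bundle formula for fibrations over a curve together with the monodromy data at the added fiber, exactly as in the proof of Theorem~\ref{main 1}(2). Next I apply Corollary~\ref{semi-flat 6} with the explicit period data of $X_1\times_{\mathbb{P}^1}X_2$ near $F$ to write down the semi-flat Calabi-Yau ansatz $\omega_{\mathrm{sf},\varepsilon}$ on a punctured neighborhood of $F$, identify its asymptotic model by a change of base and fiber coordinates, glue it to a global K\"ahler form on $M$ using the $\partial\bar\partial$-lemma proved in Section~\ref{perturbation} to obtain a background metric $\omega_{0,\varepsilon}$, and finally apply the Tian-Yau-Hein package (Theorem~\ref{TYH package}) to perturb to a Ricci-flat K\"ahler metric $\omega_{\mathrm{CY}}$; the asymptotic geometry claims then follow from the polynomial decay of the perturbation.

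For part (1), both monodromies are finite of some common order $k$, so a ramified base change $s=z^{1/k}$ trivializes them simultaneously and produces the local quotient model $(\Delta^*_s\times E_1\times E_2)/\mathbb{Z}_k$ with diagonal action $(\zeta_k,\zeta_k^\alpha,\zeta_k^\beta)$ claimed in (1a). Substituting the resulting periods $\tau_i$ into the formula of Corollary~\ref{semi-flat 6} and rescaling via a new base coordinate $\alpha_{\mathrm{new}}\sim z^{(k-\alpha-\beta)/k}$ (or logarithmically when $\alpha+\beta=k$) together with the fiberwise linear maps $\beta_1=z^{-\alpha/k}v_1$, $\beta_2=z^{-\beta/k}v_2$, the ansatz becomes the flat product of a cone of angle $2(\alpha+\beta-k)\pi/k$ (or a half-cylinder if $\alpha+\beta=k$) with a flat torus; this establishes (1c) and (1d). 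The compactification in (1b) is obtained by filling in the quotient fiber at infinity and taking a suitable terminal resolution, and the $K_{\widetilde{X}}$-formula follows by comparing the order of the descended volume form $dz\wedge dv_1\wedge dv_2$ along $\widetilde{F}_{\mathrm{red}}$ with the multiplicity of $\widetilde{F}$, mirroring the computation in the proof of Theorem~\ref{main 1}(2).

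For parts (2) and (3), at least one of $F_1,F_2$ is of $I_b^*$ type, whose monodromy is $-I$ times a unipotent shear. After a double cover removing the $-I$, the corresponding periods satisfy $\tau_1(z)=1$ and $\tau_2(z)=\tfrac{b}{2\pi i}\log z+O(1)$, so $\operatorname{Im}(\bar\tau_1\tau_2)\sim c\log|z|$ and the fiber metric shrinks logarithmically in one direction. In case (2), both elliptic factors produce $\log|z|$ growth, so the base component of~(\ref{semi-flat 7}) behaves like $\bigl((\log|z|)^2/|z|^2\bigr)dz\wedge d\bar z$; passing to radial coordinates yields volume growth $s^{3/2}$ with one-dimensional tangent cone $\mathbb{R}^+$, the generalized $\mathrm{ALH}^*$ structure. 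In case (3), the finite-monodromy factor produces a two-dimensional metric cone whose angle is determined by the monodromy order, while the $I_b^*$ factor contributes the logarithmic shrinking of one fiber direction, giving volume growth $s^2$ and the generalized $\mathrm{ALG}^*$ structure. The compactifications and $K_{\widetilde{X}}$ calculations in (2a), (3a) again use the canonical bundle formula with the explicit multiplicities at the added fiber.

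The main obstacle is verifying the Tian-Yau-Hein hypotheses in the $\mathrm{ALG}^*/\mathrm{ALH}^*$ settings of (2b) and (3b), where standard polynomial estimates are mixed with logarithmic corrections. One needs: (a) a bounded-geometry quasi-coordinate atlas at infinity, requiring careful control of the periods and their logarithmic parts; (b) the $\operatorname{SOB}(\beta)$ condition with $\beta=3/2$ in case (2) and $\beta=2$ in case (3); (c) polynomial decay of the Ricci potential $f=\log(\lambda\Omega\wedge\bar\Omega/\omega_{0,\varepsilon}^3)$ together with its higher derivatives; and (d) the integrability condition $\int_M(e^f-1)\omega_{0,\varepsilon}^3=0$, achieved by adjusting $\omega_{0,\varepsilon}$ within its $\partial\bar\partial$-class via the lemma of Section~\ref{perturbation}. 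Once (a)--(d) are in place, Theorem~\ref{TYH package} yields a smooth $u$ with $(\omega_{0,\varepsilon}+i\partial\bar\partial u)^3=\lambda\,\Omega\wedge\bar\Omega$ and polynomial decay, so that $\omega_{\mathrm{CY}}=\omega_{0,\varepsilon}+i\partial\bar\partial u$ inherits the tangent cone and volume growth asserted in (1c)--(3b).
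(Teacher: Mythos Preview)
Your plan is essentially the paper's own proof: local quotient model via simultaneous base change, Poincar\'e-residue construction of the global holomorphic volume form $\Omega=\frac{k(z)}{z^2}\,dz\wedge dv_1\wedge dv_2$ on $M$, explicit semi-flat ansatz from Corollary~\ref{semi-flat 6}, coordinate change to identify the flat/cylindrical model, the $\partial\bar\partial$-lemma and two-parameter gluing of Section~\ref{perturbation}, and finally Theorem~\ref{TYH package} together with Theorem~\ref{conv to const}.

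Two small corrections. First, in your fiber rescaling the exponent is off: the map $\Phi$ that factors through the monodromy sends $w_j\mapsto s^{k-\alpha}w_1$ (so that $\zeta_k^{k-\alpha}\cdot\zeta_k^{\alpha}=1$), hence $v_1=z^{(k-\alpha)/k}w_1$ and the correct torus coordinate is $\beta_1=z^{-(k-\alpha)/k}v_1$, not $z^{-\alpha/k}v_1$; similarly for $\beta_2$. Second, the compactification $\widetilde{X}$ in (1b), (2a), (3a) is simply the normal quotient $\widetilde{U'}/\mathbb{Z}_k$ (respectively $\widetilde{U}/\mathbb{Z}_2$, $\widetilde{U}/\mathbb{Z}_6$) glued to $M$---no terminal resolution is taken, and the $K_{\widetilde{X}}$-formula is read off directly from the pole order of $\Omega$ along $\widetilde{F}_{\mathrm{red}}$ on this singular space, not via an abstract canonical-bundle formula.
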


\begin{remark}

(i) One may argue that the fiber product of two rational elliptic surfaces is already a Calabi-Yau variety by using the adjunction formula. However, this argument is valid only for generic cases. In our cases, the fiber products are not normal, so we cannot define the canonical divisor. This is caused by the multiplicities of irreducible components in the singular fibers (for example $\mathrm{II}^*$, $\mathrm{III}^*$, $\mathrm{IV}^*$, $\mathrm{I}_b^*$).


(ii) The Calabi-Yau metrics in (2) can be viewed as generalized $\mathrm{ALH}^*$-type metrics while the the Calabi-Yau metrics in (3) can be viewed as generalized $\mathrm{ALG}^*$-type metrics.

\end{remark}

\subsection{Review of local models on elliptic fibrations}

Since our construction of non-isotrivial ansatz relies heavily on the fiber product structure of two elliptic fibrations, we need to review local model of singular fiber of elliptics fibrations. We follow the argument in \cite{hein2010gravitational} and \cite{kodaira1963compact}.

Consider an elliptic fibration $\pi:U\to \Delta$ with only one singular fiber at $0$. Then $U|_{\Delta^*}$ is abstractly isomorphic to some $(\Delta_z^*\times \mathbb{C}_v)/(\mathbb{Z}\tau_1(z)+\mathbb{Z}\tau_2(z))$, where $\tau_i(z)$ are multi-valued functions, and $(z,v)$ forms a coordinate. By adjunction formula or Poincar\'{e} residue formula, we can always write a holomorphic volume form $\Omega$ on $U$ in the coordinate of $(z,v)$ as $\Omega=g(z)dz\wedge dv$. However, the coordinate $(z,v)$ is not defined on the singular fiber, so the expression $g(z)dz\wedge dv$ is in a priori defined on $U|_{\Delta^*}$ not on $U$. Nevertheless, by assumption, $\Omega$ is defined on the whole $U$. So when we choose a coordinate $(x,y)$ around a point on the singular fiber, we can compute $dz\wedge dw$ in the new coordinate $(x,y)$ and get something as $\pi^Nh(x,y)dx\wedge dy$, where $N$ is the multiplicity of $\Omega$ along the singular fiber $\pi^{-1}(0)$. Since the semi-flat ansatz is computed in the $(z,v)$ coordinate, it's essential to determine the multiplicity $N$. 

\noindent\textbf{Case 1: Finite monodromy}: We begin by recalling the local model around a singular fiber with finite monodromy in an elliptic fibration. Let $\pi:U\to \Delta_z$ be an elliptic fibration, such that $z=0$ is the only singular fiber whose monodromy is finite. 

We only deal with the singular fiber of type $\mathrm{IV}$; other cases can be treated similarly. According to Kodaira, the $j$-invariant has zero at $z=0$. That is $\mathcal{J}(0)=0$, and let $m:=\operatorname{mult}_0 \mathcal{J} \in \mathbb{N}$. From Kodaira \cite{kodaira1963compact}, we know that $m\equiv 2$ $(\operatorname{mod}3)$. When $\tau$ approaches $\zeta_3$, we can perform a coordinate change on $\Delta$, so that 
\begin{align*}
\tau(z)=\zeta_3 \frac{1-\zeta_3 z^{\frac{m}{3}}}{1-z^{\frac{m}{3}}}
\end{align*}
Traversing a counterclockwise loop around $z=0$ once induces a transformation on $\tau$ given by
\begin{align*}
\tau \mapsto A \tau=\frac{d \tau+b}{c \tau+a}
\end{align*}where
\begin{align*}
 A=\begin{pmatrix}
-1 & 1\\
-1 & 0
 \end{pmatrix}
\end{align*}

The pullback $U^{\prime}$ of $\left.U\right|_{\Delta^*}$ under $z=s^3$ can be written as $\left(\Delta_s^* \times \mathbb{C}_w\right) /\left(\mathbb{Z}+\mathbb{Z} \tau\left(s^6\right)\right)$ and thus extends over $s=0$ with central fiber $\mathbb{C} /\left(\mathbb{Z}+\mathbb{Z} \zeta_3\right)$. The deck action of the monodromy on $U^{\prime}$ is generated by
\begin{align*}
\mathscr{A}(s, w)=\left(\zeta_3 s, \zeta_3 \frac{1-s^{2 m}}{1-\zeta_3 s^{2 m}} w\right)
\end{align*}
Note that 
\begin{align*}
\Phi: \Delta^* \times \mathbb{C} \rightarrow \Delta^* \times \mathbb{C},(s, w) \mapsto\left(s^3,\left(1-s^{2 m}\right) s^2 w\right)
\end{align*}factors through the monodromy action $A$, thus inducing an isomorphism of elliptic fibrations
\begin{align*}
\Phi:U|_{\Delta^*}=U'/\sim_{\mathscr{A}}\longrightarrow (\Delta^*\times \mathbb{C})/(\mathbb{Z}\tau_1+\mathbb{Z}\tau_2),
\end{align*}where 
\begin{align*}
&\tau_1(z)=\operatorname{pr}_2\Phi\left(z^{\frac13},1\right)=(1-z^{\frac m3})z^{\frac23};\\  
&\tau_2(z)=\operatorname{pr}_2\Phi\left(z^{\frac13},\zeta_3\right)=\zeta_3(1-\zeta_3z^{\frac m3})z^{\frac23}.
\end{align*}
Moreover, $(z,v)=\left(s^3,\left(1-s^{2 m}\right) s^2 w\right)$ can be viewed as a coordinate on $U|_{\Delta^*}$. From Hein \cite{hein2010gravitational}, by analyzing the canonical resolution, we find that the multiplicity $N$ of the holomorphic volume form along the singular fiber is $N=1$.

For sake of readers' convenience, we list all the cases in the following table:

\begin{center}
\begin{tabular}{|c|c|c|c|c|}
	\hline $\pi^*(0)$ & Matrix $A$ &  Deck action $\mathscr{A}$ & Coordinate $(z,v)$& generators $\tau_1$, $\tau_2$ \\
	\hline $\mathrm{I}^*_0$    & $\begin{pmatrix}
		-1& 0\\
		0& -1
	\end{pmatrix}$ & $(s,w)\mapsto(-s,-w)$ & $(s^2,sw)$ & $z^{\frac12}$, $z^{\frac12}\tau(z)$  \\
	\hline $\mathrm{II}$ & $\begin{pmatrix}
		0& 1\\
		-1& 1
	\end{pmatrix}$ & $(s,w)\mapsto \left(\zeta_6s,\zeta_6\frac{1-s^{2m}}{1-\zeta_3s^{2m}}\right)w $ & $(s^6,(1-s^{2m})s^5w)$ &  $(1-z^{\frac m3})z^{\frac56}$, $\zeta_3(1-\zeta_3z^{\frac m3})z^{\frac56}$  \\
	\hline $\mathrm{II}^*$ & $\begin{pmatrix}
		1& -1\\
		1& 0
	\end{pmatrix}$ & $(s,w)\mapsto \left(\zeta_6s,\zeta_6^5\frac{1-s^{2m}}{1-\zeta_3s^{2m}}\right)w $ & $(s^6,(1-s^{2m})sw)$ & $(1-z^{\frac m3})z^{\frac16}$, $\zeta_3(1-\zeta_3z^{\frac m3})z^{\frac16}$   \\
	\hline $\mathrm{III}$ & $\begin{pmatrix}
		0& 1\\
		-1& 0
	\end{pmatrix}$ & $(s,w)\mapsto \left(is,i\frac{1-s^{2m}}{1-i s^{2m}}\right)w$ & $(s^4,(1-s^{2m})s^3w)$ &  $(1-z^{\frac m2})z^{\frac34}$, $i(1+z^{\frac m2})z^{\frac34}$  \\
	\hline $\mathrm{III}^*$ & $\begin{pmatrix}
		0& -1\\
		1& 0
	\end{pmatrix}$ & $(s,w)\mapsto \left(is,-i\frac{1-s^{2m}}{1-i s^{2m}}\right)w$ & $(s^4,(1-s^{2m})sw)$ & $(1-z^{\frac m2})z^{\frac14}$, $i(1+z^{\frac m2})z^{\frac14}$ \\
	\hline $\mathrm{IV}$ & $\begin{pmatrix}
		-1& 1\\
		-1& 0
	\end{pmatrix}$ & $(s,w)\mapsto \left(\zeta_3s,\zeta_3\frac{1-s^{2m}}{1-\zeta_3s^{2m}}\right)w $ & $(s^3,(1-s^{2m})s^2w)$ & $(1-z^{\frac m3})z^{\frac23}$, $\zeta_3(1-\zeta_3z^{\frac m3})z^{\frac23}$\\
	\hline $\mathrm{IV}^*$ & $\begin{pmatrix}
		0& -1\\
		1& -1
	\end{pmatrix}$ & $(s,w)\mapsto \left(\zeta_3s,\zeta_3^2\frac{1-s^{2m}}{1-\zeta_3s^{2m}}\right)w $ & $(s^3,(1-s^{2m})sw)$ & $(1-z^{\frac m3})z^{\frac13}$, $\zeta_3(1-\zeta_3z^{\frac m3})z^{\frac13}$\\
	\hline
\end{tabular}
\end{center}

Recall that $m\equiv 1$ $(\operatorname{mod}3)$ for type $\mathrm{II}$ and $\mathrm{IV}^*$; $m\equiv 2$ $(\operatorname{mod}3)$ for type $\mathrm{II}^*$ and $\mathrm{IV}$; $m\equiv 1$ $(\operatorname{mod}2)$ for type $\mathrm{III}$ and $\mathrm{III}^*$.

\noindent \textbf{Case 2: $\mathrm{I}_b$ singular fiber} Let $\pi: U \rightarrow \Delta$ be an elliptic fibration with $\mathrm{I}_1$ singular fiber at $0$. According to Kodaira, the map
\begin{equation*}
\begin{aligned}
\Psi:(\Delta^*\times \mathbb{C})/\left(\mathbb{Z} \tau_1+\mathbb{Z} \tau_2\right)\to \Delta_z\times \mathbb{P}^2_{[X:Y:W]},\; (z,v)\mapsto   \left(z,\left[-\frac{1}{12}-\frac{1}{4 \pi^2} \wp_z(v): \frac{i}{8 \pi^3} \wp_z^{\prime}(v):1\right]\right)
\end{aligned}
\end{equation*}is an isomorphism between 
\begin{equation*}
\left(\Delta^* \times \mathbb{C}\right) /\left(\mathbb{Z} \tau_1+\mathbb{Z} \tau_2\right)\cong \{(z,[X:Y:W])|Y^2W=4 X^3+X^2W-g_2(z) XW^2-g_3(z)W^3\}\subset \Delta^* \times \mathbb{P}^2
\end{equation*}
where $\tau_1=1$, $\tau_2=\frac{1}{2\pi i}\log z$, $g_2(z)=20\sum\limits_{n=1}^\infty (1-z^n)^{-1}n^3z^n$, $g_3(z)=\frac13\sum\limits_{n=1}^\infty (1-z^n)^{-1}(7n^5+5n^3)z^n$ are regular on $\Delta$ with $g_2(0)=g_3(0)=0$ and $\wp_z(v)=\frac{1}{v^2}+\sum\limits_{\lambda\in \Lambda(z)\backslash (0,0)}\left(\frac{1}{(v-\lambda)^2}-\frac{1}{\lambda^2}\right)$ is the Weierstrass $\wp$-function associated with the lattice $\Lambda(z)$ generated by $\tau_1(z), \tau_2(z)$. $\operatorname{Im}\Psi$ in $\Delta \times \mathbb{P}^2$ is a smooth elliptic surface $\bar{U}$ with central fiber the node, $y^2=4 x^3+x^2$. This $\bar{U}$ represents the Kodaira canonical form for a singular fiber of type $\mathrm{I}_1$.

Recall that $\frac{dx}{y}$ is the generating holomorphic $1-$form on an elliptic curve $E=\{[X:Y:W]\in \mathbb{P}^2|Y^2W=4 X^3+X^2W-g_2(z) XW^2-g_3(z)W^3\}$, where $x=\frac{X}{Z}$, $y=\frac{Y}{Z}$. From
\begin{align*}
\Psi^*\left(dz\wedge \frac{dx}{y}\right)=dz\wedge \frac{-\frac{1}{4\pi^2}d\wp_z(v)}{\frac{i}{8\pi^3}\wp_z'(v)}=2\pi i dz\wedge dv,
\end{align*}we know that when changing coordinates from $(z,(x,y))$ to $(z,v)$, the pullback form $\Psi^{-1*}(dz\wedge dv)$ has no order along the singular fiber, that is $N=0$.

An $\mathrm{I}_b$ singular fiber can always be realized over $\Delta^*$ with $\tau_1=1, \tau_2=\frac{b}{2 \pi i} \log z$. It admits an unramified, fiber-preserving $b$-fold covering map onto an $\mathrm{I}_1$ degeneration globally. So in this case $N=0$.

\noindent \textbf{Case 3: $\mathrm{I}^*_b$ singular fiber:} Let $\pi: U \rightarrow \Delta$ be an elliptic fibration with $\mathrm{I}^*_b$ singular fiber at $0$. The pullback of $\left.U\right|_{\Delta^*}$ under $z=u^2$ extends as $U^{\prime} \rightarrow \Delta_u$ with an $\mathrm{I}_{2 b}$ central fiber. $\mathbb{Z}_2$ acts naturally on $U^{\prime}$, extending the action by deck transformations on $U^{\prime}$, in such a way that the quotient elliptic surface has four ordinary double points along its central fiber. Resolving these singularities results in the Kodaira canonical form $\bar{U}$ for $I_b^*$. We can assume that $\left.U^{\prime}\right|_{\Delta^*}=\left(\Delta_u^* \times \mathbb{C}_w\right) /\left(\mathbb{Z} \tau_1+\mathbb{Z} \tau_2\right)$ with $\tau_1=1$ and $\tau_2=\frac{b}{\pi i} \log u$. Since $d u \wedge d w$ is invariant under the $\mathbb{Z}_2$-action, it induces a holomorphic volume form on the crepant resolution $\bar{U}$. 

Now we can change the coordinate 
\begin{equation*}
\Delta^*_u\times \mathbb{C}_w\to \Delta^*_z\times \mathbb{C}_v,\; (u,w)\mapsto (z,v)=(u^2,uw).
\end{equation*}In $(z,v)$ coordinate, we have $\tau_1(z)=z^{\frac12}$, $\tau_2(z)=\frac{b}{2\pi i}z^{\frac12}\log z$, $dz\wedge dv=u^2du\wedge dw$. Therefore, when changing coordinate from $(u,w)$ to $(z,v)$, the pullback of $dz\wedge dv$ has order $N=1$ along the singualar fiber.

\subsection{Ansatz on fiber product}

Now we prove part of Theorem \ref{main 2}. In this section, we only discuss the global algebraic structure, construct the ansatz modeled at infinity and verify the asymptotic behaviors (including the verification of $\mathrm{SOB}$ and $\mathrm{HMG}$ properties). We will discuss gluing issues and perturbation to a genuine Calabi-Yau metric in the next section.

The analysis is largely analogous across all these cases, so we will focus on a few representative examples.

\noindent \textbf{Situation 1:} Both $F_1$ and $F_2$ are of finite monodromy, i.e. corresponding to (1) in Theorem \ref{main 2}.

For (1a), (1b), (1c), we consider the case when $F_1$ is a singular fiber of type $\mathrm{II}^*$ and $F_2$ is of type $\mathrm{III}^*$. 

(1a) Let $U_i$ be a neighborhood of singular fiber $F_i$. We pull back $U_1|_{\Delta^*}$ under $z_1=s_1^{6}$, $U_2|_{\Delta^*}$ under $z_2=s_2^{4}$. Let $U_i'$ be the pullback of $U_1|_{\Delta^*}$ under $z_i$. From discussion in Section \ref{elaboration}, we may conclude
\begin{equation*}
U_1'=(\Delta^*_{s_1}\times \mathbb{C}_{w_1})/(\mathbb{Z}+\mathbb{Z}\tau(s_1^6)),
\end{equation*}and 
\begin{equation*}
\mathscr{A}_1:(s_1,w_1)\mapsto \left(\zeta_6s_1,\zeta_6^5\frac{1-s_1^{2m_1}}{1-\zeta_3s_1^{2m_1}}w_1\right);
\end{equation*}
\begin{equation*}
U_2'=(\Delta^*_{s_2}\times \mathbb{C}_{w_2})/(\mathbb{Z}+\mathbb{Z}\tilde{\tau}(s_2^4)),
\end{equation*}and 
\begin{equation*}
\mathscr{A}_2:(s_2,w_2)\mapsto \left(i s_2,-i\frac{1-s_2^{2m_2}}{1+s_2^{2m_2}}w_2\right),
\end{equation*}where $\tau(z_1)=\zeta_3\frac{1-\zeta_3z_1^{\frac{m_1}{3}}}{1-z_1^{\frac{m_1}{3}}}$, $\tilde{\tau}(z_2)=i\frac{1+z_2^{\frac{m_2}{2}}}{1-z_2^{\frac{m_2}{2}}}$. When taking fiber product, we have $z=z_1=z_2$. It is therefore convenient to perform a further pullback with $s_1=s^2$, $s_2=s^3$. Consequently, the fiber product space $U|_{\Delta^*}$ when pulled back under $z=s^{12}$, can be expressed as $U'=(\Delta^*_s\times \mathbb{C}_{w_1}\times \mathbb{C}_{w_2})/\mathbb{Z}+\mathbb{Z}\tau(s_1^6)+\mathbb{Z}+\mathbb{Z}\tilde{\tau}(s_2^4)$, where the deck action of monodromy acts on $U'$ by 
\begin{equation*}
\mathscr{A}:(s,w_1,w_2)\mapsto \left(\zeta_{12}s,\zeta_6^5\frac{1-s^{4m_1}}{1-\zeta_3s^{4m_1}}w_1, -i\frac{1-s^{6m_2}}{1+s^{6m_2}}w_2\right).
\end{equation*}
Hence $U|_{\Delta^*}=U'/\sim_{\mathscr{A}}$.

(1b) First we describe the compactification. The map
\begin{equation*}
\Phi:\Delta^*\times \mathbb{C}\times \mathbb{C}\to \Delta^*\times \mathbb{C}\times \mathbb{C},\; (s,w_1,w_2)\mapsto \left(s^{12},(1-s^{4m_1})s^{2}w_1, (1-s^{6m_2})s^3w_2\right)
\end{equation*}factors through the monodromy action $\mathscr{A}$ and hence induces an isomorphism of elliptic fibrations
\begin{align*}
\Phi:U|_{\Delta^*}=U'/\sim_{\mathscr{A}}\longrightarrow (\Delta^*\times \mathbb{C}\times \mathbb{C})/(\mathbb{Z}\tau_1+\mathbb{Z}\tau_2+\mathbb{Z}\tau_3+\mathbb{Z}\tau_4),
\end{align*}where 
\begin{align*}
&\tau_1(z)=\operatorname{pr}_2\Phi\left(z^{\frac13},1\right)=\left(1-z^{\frac{m_1}{3}}\right)z^{\frac16};\\
&\tau_2(z)=\operatorname{pr}_2\Phi\left(z^{\frac13},\zeta_3\right)=\zeta_3\left(1-\zeta_3z^{\frac{m_1}{3}}\right)z^{\frac16};\\
&\tau_3(z)=\operatorname{pr}_3\Phi\left(z^{\frac13},1\right)=\left(1-z^{\frac{m_2}{2}}\right)z^{\frac14};\\
&\tau_4(z)=\operatorname{pr}_3\Phi\left(z^{\frac13},\zeta_3\right)=i\left(1+z^{\frac{m_2}{2}}\right)z^{\frac14}.
\end{align*}
Moreover, $(z,v_1,v_2)=\left(s^{12},(1-s^{4m_1})s^{2}w_1, (1-s^{6m_2})s^3w_2\right)$ can be viewed as a coordinate on $U|_{\Delta^*}$. Note that $U'$ can be extended to $s=0$ and the central fiber is $\mathbb{C}^2/(\mathbb{Z}+\zeta_3\mathbb{Z}+\mathbb{Z}+i\mathbb{Z})$. Denote $\widetilde{U'}$ to be the extended space. The deck action $\mathscr{A}$ also acts on $\widetilde{U'}$, though the action is not free anymore. Then $\widetilde{U'}/\sim_{\mathscr{A}}$ provides a compactification of $U|_{\Delta^*}$. Hence we can compactify $M$ into a new projective variety $\widetilde{X}$.

Then we try to calculate the canonical divisor of $\widetilde{X}$. We first find a holomorphic volume form $\Omega$ on $M$, and then extend it to a rational form, still denoted $\Omega$ on $\widetilde{X}$. Since $\widetilde{X}$ is relatively minimal, we can define an order of pole along the singular fiber. From this we obtain the canonical divisor.

From characterization of rational elliptic surface, we know that there exists holomorphic volume form $\Omega_i$ on $X_i\backslash F_i$, and $\Omega_i$ has an order one pole along $F_i$. According to Hein \cite{hein2010gravitational}, in the local coordinate $(z_i,v_i)$ the holomorphic volume form is of the form $\Omega_i=\frac{k_i(z_i)}{z_i^2}dz_i\wedge dv_i$, where $i=1,2$. We use Poincar\'{e}-Residue formula to construct a holomorphic volume form on $X\backslash F$. Now $\Omega_1\wedge \Omega_2=-\frac{k_1(z_1)k_2(z_2)}{z_1^2z_2^2}dz_1\wedge dz_2\wedge dv_1\wedge dv_2$ can be viewed as a global section of $K_{X_1\times X_2}$.  The fiber product $X_1\times_{\mathbb{P}^1}X_2=\{(x_1,x_2)\in X_1\times X_2|\pi_1(x_1)=\pi_2(x_2)\}$ can be viewed as a hypersurface or divisor in the product space $X_1\times X_2$. Also note that there is a commutative diagram
\begin{displaymath}
\xymatrix{X_1\times_{\mathbb{P}^1}X_2\ar[r]^{\iota}\ar[d]&X_1\times X_2\ar[d]\\
D\ar[r]^{\iota}& \mathbb{P}^1\times \mathbb{P}^1},
\end{displaymath}where $D\subset \mathbb{P}^1\times \mathbb{P}^1$ is the diagonal. Since $D\sim\{p\}\times \mathbb{P}^1+\mathbb{P}^1\times \{p\}$ for any point $p$ in $\mathbb{P}^1$, we have $\mathcal{O}_{X_1\times X_2}(X)\sim \pi_1^*\mathcal{O}_{\mathbb{P}^1}(1)\boxtimes\pi_2^*\mathcal{O}_{\mathbb{P}^1}(1)$. Let $p$ be the point corresponding to $F_i$. In the local coordinate $(z_1,z_2,v_1,v_2)$, the global section $s$ of $\mathcal{O}_{X_1\times X_2}(X)$, when restricted to $U_1\times U_2$, is of the form $\frac{h(z_1,z_2,v_2,v_2)z_1z_2}{z_1-z_2}$, where $h$ is a local holomorphic nonzero function on $U_1\times U_2$, $z_1z_2$ corresponds to $\pi_1^*\mathcal{O}_{\mathbb{P}^1}(1)\boxtimes\pi_2^*\mathcal{O}_{\mathbb{P}^1}(1)$ and $\frac{1}{z_1-z_2}$ is a local section of $\mathcal{O}_{X_1\times X_2}(X)$. From Poincar\'{e} residue formula, we get
\begin{equation*}
\Omega_1\wedge \Omega_2\otimes s=-\frac{k_1k_2}{z_1^2z_2^2}\frac{hz_1z_2}{z_1-z_2}d(z_1-z_2)\wedge dz_2\wedge dv_1\wedge dv_2\mapsto \Omega \coloneqq -\frac{k_1k_2h}{z^2}dz\wedge dw_1\wedge dw_2.
\end{equation*}The well-definedness of Poincar\'{e} residue suggests that $\Omega$ is a global holomorphic volume form on $M=X\backslash F$ (not on $X$!). Since $X\backslash F\to \mathbb{P}^1$ is an abelian surface fibration, if we apply Poincar\'{e} residue formula to a generic smooth fiber, we map $\Omega$ to the holomorphic volume form on an abelian surface. This implies $h(z,v_1,v_2)=h(z)$ depends only on $z$. In conclusion, $\Omega=\frac{k(z)}{z^2}dz\wedge dv_1\wedge dv_2$ on $M$.

Now we try to extend $\Omega$ to $\widetilde{X}$. To do so, we need to calculate the transition function between coordinate $(z,v_1,v_2)$ on $M$ and the coordinate on $\widetilde{X}$.

In the neighborhood of the singular point $(0,0,0)$, if we take $w_1'=\left(1-s_1^{4m_1}\right)w_1$, $w_2'=\left(1-s_2^{6m_2}\right)w_2$, then the monodromy action in this new coordinate looks like
\begin{equation*}
\mathscr{A}:(s,w_1',w_2')\mapsto (\zeta_{12}s,\zeta_6^5w_1',-iw_2').
\end{equation*}
Note that $$\mathbb{C}^3/\mathbb{Z}_{12}=\operatorname{Spec}\mathbb{C}[s^{12},s^{2}w_1',s^3w_2',w_1^{'6},w_2^{'4},sw_1^{'5}w_2',\cdots].$$ We can take $x_1=w_1^{'6}$, $x_2=w_2^{'4}$, $x_3=sw_1^{'5}w_2'$ as local coordinte on $\widetilde{X}$ across the singular fiber. Now $(x_1,x_2,x_3)\mapsto x_3^3$ represents the fibration map and $\{x_3=0\}$ represents the reduced component of the fiber. 

Direct computation shows that
\begin{align*}
\frac{1}{z^2}dz\wedge dv_1\wedge dv_2=12\frac{1}{s^8}\left(1-S_1^{4m_1}\right)\left(1-s_2^{6m_2}\right)ds\wedge dw_1\wedge dw_2=\frac{h(x_1,x_2)}{x_3^8}dx_1\wedge dx_2\wedge dx_3,
\end{align*}where $h(x_1,x_2)$ is some holomorphic function. From this we know that the canonical divisor of $\widetilde{X}$ is 
\begin{equation*}
K_{\widetilde{X}}=-\frac{8}{12}[\widetilde{F}]=\frac{12-10-9-1}{12}[\widetilde{F}].
\end{equation*}

(1c) Now we calculate the semi-flat ansatz by using Corollary \ref{semi-flat 6}. From the expressions of $\tau_i(z)$, $i=1,2,3,4$, we immediately get
\begin{align*}
\operatorname{Im}(\bar{\tau}_1\tau_2)=\frac{\sqrt{3}}{2}\left(1-|z|^{\frac{2m_1}{3}}\right)|z|^{\frac13},\quad \operatorname{Im}(\bar{\tau}_3\tau_4)=\left(1-|z|^{m_2}\right)|z|^{\frac12};
\end{align*}
\begin{align*}
\Gamma^1(z,v)&=\frac{1}{\operatorname{Im}\left(\bar{\tau}_1 \tau_2\right)}\left(\operatorname{Im}\left(\bar{\tau}_1 v_1\right) \tau_2^{\prime}-\operatorname{Im}\left(\bar{\tau}_2 v_1\right) \tau_1^{\prime}\right)\\
&=\frac{1}{6}\left(\frac{v_1}{z}+\mathrm{Error}_1\right);
\end{align*}
\begin{align*}
\Gamma^2(z, v)&=\frac{1}{\operatorname{Im}\left(\bar{\tau}_3 \tau_4\right)}\left(\operatorname{Im}\left(\bar{\tau}_3 v_2\right) \tau_4^{\prime}-\operatorname{Im}\left(\bar{\tau}_4 v_2\right) \tau_3^{\prime}\right)\\
&=\frac14\left(\frac{v_2}{z}+\mathrm{Error}_2\right),
\end{align*}where
\begin{align*}
&\mathrm{Error}_1=\frac{-2m_1}{v_1}\frac{z^{\frac{m_1}{3}}}{1-|z|^{\frac{2m_1}{3}}}\left\{\bar{z}^{\frac{m_1}{3}}v_1+\frac{z^{\frac16}}{\bar{z}^{\frac16}}\bar{v}_1\right\},\\
&\mathrm{Error}_2=\frac{2m_2}{v_2}\frac{z^{\frac{m_2}{2}}}{1-|z|^{m_2}}\left\{v_2+\frac{z^{\frac12}}{|z|^{\frac12}}\bar{v}_2\right\}
\end{align*}
Plugging all these data into the formula \eqref{semi-flat 7}, we get
\begin{align*}
\omega_{\mathrm{sf}, \varepsilon}&=\frac{2\sqrt{3}i|k|^2\left(1-|z|^{\frac{2m_1}{3}}\right)\left(1-|z|^{m_2}\right)|z|^{\frac56}}{\varepsilon^2}\frac{dz\wedge d\bar{z}}{|z|^4}\\
&+\frac{i}{2}\frac{\varepsilon}{\frac{\sqrt{3}}{2}\left(1-|z|^{\frac{2m_1}{3}}\right)|z|^{\frac13}}\left(d v_1-\frac16 \left(\frac{v_1}{z}+\mathrm{Error}_1\right) d z\right) \wedge\left(d \bar{v}_1-\frac16 \left(\frac{\bar{v}_1}{\bar{z}}+\overline{\mathrm{Error}_1}\right) d \bar{z}\right)\\
&+\frac{i}{2}\frac{\varepsilon}{\left(1-|z|^{m_2}\right)|z|^{\frac12}}\left(d v_2-\frac14 \left(\frac{v_2}{z}+\mathrm{Error}_2\right) d z\right) \wedge\left(d \bar{v}_2-\frac14 \left(\frac{\bar{v}_2}{\bar{z}}+\overline{\mathrm{Error}_2}\right) d \bar{z}\right),
\end{align*}

To analyze the asymptotic behavior of the ansatz, change coordinate on $\Delta\backslash [0,1)$ by setting 
\begin{align*}
z=\left(\frac{\alpha}{\alpha_0}\right)^{-\frac{12}{7}},\; v_1=\left(\frac{\alpha}{\alpha_0}\right)^{-\frac{2}{7}},\; v_2=\left(\frac{\alpha}{\alpha_0}\right)^{-\frac{3}{7}},
\end{align*}where $\alpha_0=\frac{24\sqrt[4]{3}|k(0)|}{7\varepsilon}$.
Then $\alpha$ ranges over the sector $|\alpha|>\alpha_0$, $0<\arg \alpha<\frac{7\pi }{6}$, and $\beta_1$ moves in a fundamental region converging to the torus $\mathbb{C}/\mathbb{Z}+\zeta_3\mathbb{Z}$, $\beta_2$ moves in a fundamental region converging to the torus $\mathbb{C}/\mathbb{Z}+i\mathbb{Z}$ as $|\alpha|\to \infty$. In the new coordinate, the above ansatz looks like
\begin{equation}
\omega_{\operatorname{sf},\varepsilon}=\frac{i}{2}\left\{d\alpha\wedge d\bar{\alpha}+\frac{2\varepsilon}{\sqrt{3}}d\beta_1\wedge d\bar{\beta}_1+\varepsilon d\beta_2\wedge d\bar{\beta}_2\right\}\left(1+\mathrm{Error}(\alpha)\right).
\end{equation}$\operatorname{Error}(\alpha)$ constitutes three error terms $\frac{\left|k\left(\left(\frac{\alpha}{\alpha_0}\right)^{-\frac{12}{7}}\right)\right|^2}{|k(0)|^2}\left(1-\left|\frac{\alpha}{\alpha_0}\right|^{-\frac{8}{7}m_1}\right)\left(1-\left|\frac{\alpha}{\alpha_0}\right|^{-\frac{12}{7}m_2}\right)-1$, $\mathrm{Error}_1(\alpha)$ and $\mathrm{Error}_2(\alpha)$. Hence, $\operatorname{Error}(\alpha)=O\left(|\alpha|^{-\frac{12}{7}}\right)$ and the error bound improves by a factor $|\alpha|^{-1}$ whenever we take a derivative with respect to $\alpha$ or $\beta$. In particular, $|\operatorname{Rm}|=O\left(|\alpha|^{-\frac{31}{12}}\right)$.

(1d) There are only three cases, that is $(F_1,F_2)=(\mathrm{II},\mathrm{II}^*)$, $(\mathrm{III},\mathrm{III}^*)$, $(\mathrm{IV},\mathrm{IV}^*)$. We treat the case $(\mathrm{III},\mathrm{III}^*)$ in detail.

The fiber product space $U|_{\Delta^*}$, when pulled back under $z=s^{4}$, can be written as $U'=(\Delta^*_s\times \mathbb{C}_{w_1}\times \mathbb{C}_{w_2})/\mathbb{Z}+\mathbb{Z}\tau(s^4)+\mathbb{Z}+\mathbb{Z}\tilde{\tau}(s^4)$, where the deck action of monodromy acts on $U'$ by 
\begin{equation*}
\mathscr{A}:(s,w_1,w_2)\mapsto \left(i s,i\frac{1-s^{2m_1}}{1+s^{2m_1}}w_1, -i\frac{1-s^{2m_2}}{1+2^{6m_2}}w_2\right).
\end{equation*}
Hence $U|_{\Delta^*}=U'/\sim_{\mathscr{A}}$. The map
\begin{equation*}
\Phi:\Delta^*\times \mathbb{C}\times \mathbb{C}\to \Delta^*\times \mathbb{C}\times \mathbb{C},\; (s,w_1,w_2)\mapsto \left(s^{4},(1-s^{2m_1})s^{3}w_1, (1-s^{2m_2})sw_2\right)
\end{equation*}factors through the monodromy action $\mathscr{A}$ and hence induces an isomorphism of fibrations
\begin{align*}
\Phi:U|_{\Delta^*}=U'/\sim_{\mathscr{A}}\longrightarrow (\Delta^*\times \mathbb{C}\times \mathbb{C})/(\mathbb{Z}\tau_1+\mathbb{Z}\tau_2+\mathbb{Z}\tau_3+\mathbb{Z}\tau_4),
\end{align*}where 
\begin{align*}
&\tau_1(z)=\operatorname{pr}_2\Phi\left(z^{\frac14},1\right)=\left(1-z^{\frac{m_1}{2}}\right)z^{\frac34};\\
&\tau_2(z)=\operatorname{pr}_2\Phi\left(z^{\frac14},3\right)=i\left(1+z^{\frac{m_1}{2}}\right)z^{\frac34};\\
&\tau_3(z)=\operatorname{pr}_3\Phi\left(z^{\frac14},1\right)=\left(1-z^{\frac{m_2}{2}}\right)z^{\frac14};\\
&\tau_4(z)=\operatorname{pr}_3\Phi\left(z^{\frac14},i\right)=i\left(1+z^{\frac{m_2}{2}}\right)z^{\frac14}.
\end{align*}
Moreover, $(z,v_1,v_2)=\left(s^{12},(1-s^{4m_1})s^{2}w_1, (1-s^{6m_2})s^3w_2\right)$ can be viewed as a coordinate on $U|_{\Delta^*}$. Note that $U'$ can be extended to $s=0$ and the central fiber is $\mathbb{C}^2/(\mathbb{Z}+\zeta_3\mathbb{Z}+\mathbb{Z}+i\mathbb{Z})$. Denote $\widetilde{U'}$ to be the extended space. The deck action $\mathscr{A}$ also acts on $\widetilde{U'}$, though the action is not free anymore. Then $\widetilde{U'}/\sim_{\mathscr{A}}$ provides a compactification of $U|_{\Delta^*}$. Hence we can compactify $M$ into a new projective variety $\widetilde{X}$. Similarly, we know that $K_{\widetilde{X}}=-\frac14 [\widetilde{F}]$. The only difference is the asymptotic behavior of the corresponding ansatz. Plugging all the data into the formula \eqref{semi-flat 7}, we get
\begin{align*}
\omega_{\mathrm{sf}, \varepsilon}&=\frac{4i|k|^2\left(1-|z|^{m_1}\right)\left(1-|z|^{m_2}\right)}{\varepsilon^2}\frac{dz\wedge d\bar{z}}{|z|^2}\\
&+\frac{i}{2}\frac{\varepsilon}{\left(1-|z|^{m_1}\right)|z|^{\frac32}}\left(d v_1-\frac34 \left(\frac{v_1}{z}+\mathrm{Error}_1\right) d z\right) \wedge\left(d \bar{v}_1-\frac34 \left(\frac{\bar{v}_1}{\bar{z}}+\overline{\mathrm{Error}_1}\right) d \bar{z}\right)\\
&+\frac{i}{2}\frac{\varepsilon}{\left(1-|z|^{m_2}\right)|z|^{\frac12}}\left(d v_2-\frac14 \left(\frac{v_2}{z}+\mathrm{Error}_2\right) d z\right) \wedge\left(d \bar{v}_2-\frac14 \left(\frac{\bar{v}_2}{\bar{z}}+\overline{\mathrm{Error}_2}\right) d \bar{z}\right),
\end{align*}where
\begin{align*}
&\mathrm{Error}_1=\frac{2m_1}{v_1}\frac{z^{\frac{m_1}{2}}}{1-|z|^{m_1}}\left\{v_1+\frac{z^{\frac32}}{|z|^{\frac32}}\bar{v}_1\right\},\\
&\mathrm{Error}_2=\frac{2m_2}{v_2}\frac{z^{\frac{m_2}{2}}}{1-|z|^{m_2}}\left\{v_2+\frac{z^{\frac12}}{|z|^{\frac12}}\bar{v}_2\right\}
\end{align*}
Change the coordinate on $\Delta\backslash [0,1)$ by setting $z=\exp\left(-\frac{\varepsilon \alpha}{2\sqrt{2}|k(0)|}\right)$, $v_1=\exp\left(-\frac{3\varepsilon \alpha}{8\sqrt{2}|k(0)|}\right)\beta_1$, $v_2=\exp\left(-\frac{\varepsilon \alpha}{8\sqrt{2}|k(0)|}\right)\beta_2$, where $\alpha\in (0,\infty)\times i\left(0,\frac{4\sqrt{2}\pi|k(0)|}{\varepsilon}\right)$, $\beta_i$ move in a fundamental region converging to the torus $\mathbb{C}/\mathbb{Z}+i\mathbb{Z}$ as $|\alpha|\to \infty$. Then the metric in the new coordinate is of the form
\begin{align*}
\omega_{\operatorname{sf},\varepsilon}=\frac{i}{2}\left(d\alpha\wedge d\bar{\alpha}+d\beta_1\wedge d\bar{\beta}_1+d\beta_2\wedge d\bar{\beta}_2\right)\left(1+\mathrm{Error}(\alpha)\right),
\end{align*}where $\mathrm{Error}(\alpha)=O_{\varepsilon}\left(\exp\left(-\frac{\varepsilon \operatorname{Re}\alpha}{2\sqrt{2}|k(0)|}\right)\right)$. This error bound holds true for all derivatives as well.

\noindent \textbf{Situation 2:} Both $F_i$ are of $\mathrm{I}^*_{b_i}$ type, i.e. corresponding to (2) in Theorem \ref{main 2}. 

(2a) Let $U_i$ be a neighborhood of singular fiber $F_i$. Let $\Omega_i$ be the holomorphic volume form on $X_i\backslash F_i$. From the discussion in Section \ref{elaboration}, we know that there exists an appropriate coordinate $(z_i,v_i)$ on $U_i$ such that the $\Omega_i=\frac{k_i(z)}{z_i^2}dz_i\wedge dv_i$, $i=1,2$, where $k_i(0)\neq 0$. According to the Poincar\'{e} residue formula, we can construct a holomorphic volume form $\Omega$ on $X\backslash F$ such that in the fiber product coordinate, we have $\Omega=\frac{k(z)}{z^2}dz\wedge dv_1\wedge dv_2$.

We then describe the compactification of $X$. Pull back $U_i|_{\Delta^*}$ under $z_i=u_i^2$, the resulting $U_i'$ can be extended to $\widetilde{U_i'}$ such that the singular fiber at $0$ is of $\mathrm{I}_{2b_i}$-type. Consider the fiber product $\widetilde{U}=\widetilde{U_1'}\times_{\mathbb{P}^1}\widetilde{U_2'}$. It's still a normal projective variety, since there are only $4b_1b_2$ isolated singular points on the fiber product. Now we can quotient $\widetilde{U}$ by the natural $\mathbb{Z}_2$-action. It's clear that outside the singular fiber, $\widetilde{U}/\mathbb{Z}_2$ is isomorphic to the fiber product of $U_1\backslash F_1\times_{\mathbb{P}^1}U_2\backslash F_2$. Hence $\widetilde{U}/\mathbb{Z}_2$ is a natural compactification. Note that $(z,v_1,v_2)=(u^2,uw_1,uw_2)$, where $(z,w_1)$, $(z,w_2)$ are coordinates on $U_1'$ and $U_2'$ respectively and $(z,v_1,v_2)$ is the coordinate on $\widetilde{U}$. Now
\begin{equation*}
\Omega=\frac{k(z)}{z^2}dz\wedge dv_1\wedge dv_2=\frac{2k(u^2)}{u}du\wedge dw_1\wedge dw_2. 
\end{equation*}Hence $K_{\widetilde{X}}=-\frac12 [\widetilde{F}]$.

(2b) The period in the coordinate of $(z,v_1,v_2)$ is 
\begin{align*}
\tau_1(z)=z^{\frac12},\; \tau_2(z)=\frac{b_1}{2\pi i}z^{\frac12}\log z,\; \tau_3(z)=z^{\frac12},\; \tau_4(z)=\frac{b_2}{2\pi i}z^{\frac12}\log z.
\end{align*}Plugging these data into formula \eqref{semi-flat 7}, we get
\begin{align*}
\omega_{\operatorname{sf},\varepsilon}&=\frac{ib_1b_2|k|^2\left|\log |z|\right|^2}{\pi^2\varepsilon^2|z|^2}dz\wedge d\bar{z}\\
&+\frac{i\pi \varepsilon}{4|z|\left|\log |z|\right|}\sum\limits_{j=1}^2\frac{1}{b_j}\left\{dv_j-\left(\frac{v_j}{z}-\frac{i\operatorname{Im}\left(z^{-\frac12}v_j\right)}{4z^{\frac12}\left|\log |z|\right|} \right)dz\right\}\wedge \left\{d\bar{v}_j-\left(\frac{\bar{v}_j}{\bar{z}}+\frac{i\operatorname{Im}\left(z^{-\frac12}v_j\right)}{4\bar{z}^{\frac12}\left|\log |z|\right|} \right)d\bar{z}\right\}
\end{align*}
For the purpose of analyzing the asymptotic behavior, it is beneficial to reformulate the ansatz using the coordinates  $(u,w_1,w_2)$:
\begin{align*}
\omega_{\operatorname{sf},\varepsilon}&=\frac{4i|k|^2}{\pi^2\varepsilon^2}\frac{\left|\log |u|\right|^2}{|u|^4}du\wedge d\bar{u}\\
&+\frac{i\pi\varepsilon}{2\left|\log |u|\right|}\sum\limits_{j=1}^2\frac{1}{b_j}\left(dw_j+\frac{i\operatorname{Im}(w_j)}{u\left|\log |u|\right|}du\right)\wedge \left(d\bar{w}_j-\frac{i\operatorname{Im}(w_j)}{\bar{u}\left|\log |u|\right|}d\bar{u}\right).
\end{align*}

Let $\Delta^*=\left\{z\in X|0<z<\frac12\right\}$, $U=f^{-1}(\Delta^*)$, $z_x=f(x)$ for $x\in U$. For fixed $x_0\in U$, define $r_x=\operatorname{dist}_M(x_0,x)$. Let $g=\frac{i|k|^2\left|\log |z|\right|^2}{\pi^2\varepsilon^2|z|^2}dz\wedge d\bar{z}$.

Note that the diameter of $f^{-1}(z)$ is approximately $\frac{C\left|\log|u|\right|}{\sqrt{\left|\log|u|\right|}}=C\left|\log|u|\right|^{\frac12}=C'\left|\log|z|\right|^{\frac12}$; The $g-$length of $\{z'\in \Delta^*:|z'|=|z|\}$ is approximately $2\pi \frac{\left|\log|z|\right|}{|z|}|z|=2\pi \left|\log|z|\right|$; For fixed $z_0$ and any $z$ lying on the line $0z_0$, the radial distance from $z_0$ to $z$ is approximately $\left|\int_{|z_0|}^{|z|}\frac{\log r}{r}dr\right|\sim \frac12 \left|\log |z|\right|^2$ as $z\to 0$. Thus for any fixed $x_0\in U$ and any $x\in U$ sufficiently close to the singular fiber, we have 
\begin{align*}
r_x\sim \operatorname{dist}_g(z_x,z_{x_0})\sim \left|\log |z_x|\right|^2.
\end{align*}

Note $f:U\to \Delta^*$ is a Riemannian submersion whose fiber volume is $\varepsilon$, so $\operatorname{Vol}(f^{-1}(B),g_{\operatorname{sf},\varepsilon})=\varepsilon\cdot \operatorname{Vol}(B,g)$, for any $B\subset \Delta^*$. Moreover, $f$ is bi-Lipschitz, hence 
\begin{align*}
\operatorname{Vol}_{g_{\operatorname{sf},\varepsilon}}(B(x_0,r_x))\sim C\int_0^{2\pi}d\theta\int_{|z_{x_0}|}^{|z_x|}\frac{(\log r)^2}{r^2}rdr\sim C \left|\log|z_x|\right|^3\sim r_x^{\frac 32},
\end{align*}i.e. the volume growth of geodesic ball is of order $\frac32$. Hence $\omega_{\operatorname{sf},\varepsilon}$ is $\operatorname{SOB}\left(\frac32\right)$.

Next, we prove $\left|B\left(x,\frac12 r_x\right)\right|\geq \frac{1}{C}r_x^{\frac32}$, if $r_x>>1$. Recall that the diameter of $f^{-1}(z)$ and the $g$-length of $\{z'\in \Delta:|z'|=|z|\}$ are of order  $\left|\log|z|\right|^{\frac12}$, $\left|\log|z|\right|$ respectively, so 
\begin{equation*}
B\left(x,\frac12 r_x\right)\supset\left\{y\in U:|z_x|<|z_y|<|z_x|^{1-\alpha}\right\},
\end{equation*}for all $x\in U$, $r_x>>1$ and small $\alpha>0$. By coarea formula, we have
\begin{align*}
\left|B\left(x,\frac12 r_x\right)\right|\geq 2\pi\int_{|z_x|}^{|z_x|^{1-\alpha}}r^{-2}|\log r|dr=\frac{2\pi }{3}(\alpha^3-3\alpha^2+3\alpha)\left|\log |z_x|\right|^3\sim r_x^{\frac32}.
\end{align*}

Hence $\omega_{\operatorname{sf},\varepsilon}$ is $\operatorname{SOB}\left(\frac32\right)$.

To determine the norm of Riemann curvature tensor, we just need to calculate the norm of Chern curvature. Direct computation shows that
\begin{equation*}
|\operatorname{Rm}(x)|^2\sim \frac{(\pi\varepsilon)^4|z_x|^2}{4|k(0)|^4|\log |z_x||^4}\sim  e^{-2r_x^{\frac12}} r_x^{-2}.
\end{equation*}From this, we know that $C^{k,\alpha}$ quasi-atlas exists.

Now we compute the asymptotic cone of $M$. Note that the diameter of fiber $f^{-1}(z_x)$ is only of order $r_x^{\frac14}$, hence we only need to compute the tangent cone of $g$. Define
\begin{align*}
\Phi_\lambda:[0,\infty)\times S^1\to \Delta^*,\; (s,\theta)\mapsto \frac12 \operatorname{exp}\left(-\lambda^{-\frac12}s^{\frac12}+i\theta\right)=z. 
\end{align*}Direct computation shows that
\begin{align*}
\Psi^*_\lambda(\lambda^2 g)=\frac{4i|k|^2b\lambda^2\left|\log \frac12-\lambda^{-\frac12}s^{\frac12}\right|^2}{2\pi \varepsilon}\left(\frac14\lambda^{-1}s^{-1}ds\otimes ds+d\theta\otimes d\theta\right)\stackrel{\lambda\to 0}{\longrightarrow} \frac{i\left|k\left(\frac12\right)\right|^2b}{2\pi \varepsilon}ds\otimes ds.
\end{align*}Hence the tangent cone is $\mathbb{R}^+$.

\noindent \textbf{Situation 3:} $F_1$ is of $\mathrm{I}^*_b$ type and $F_2$ is of $\mathrm{II}^*$ or $\mathrm{III}^*$ or $\mathrm{IV}^*$ type, i.e. corresponding to (3) in Theorem \ref{main 2}.

We only discuss the case when $F_2$ is of type $\mathrm{IV}^*$ in detail.

(3a) Let $U_i$ be a neighborhood of singular fiber $F_i$. Let $\Omega_i$ be the holomorphic volume form on $X_i\backslash F_i$. From the discussion in Section \ref{elaboration}, we know that there exists an appropriate coordinate $(z_i,v_i)$ on $U_i$ such that the $\Omega_i=\frac{k_i(z)}{z_i^2}dz_i\wedge dv_i$, $i=1,2$, where $k_i(0)\neq 0$. According to the Poincar\'{e} residue formula, we can construct a holomorphic volume form $\Omega$ on $X\backslash F$ such that in the fiber product coordinate, we have $\Omega=\frac{k(z)}{z^2}dz\wedge dv_1\wedge dv_2$.

We then describe the compactification of $X$. Pull back $U_i|_{\Delta^*}$ under $z_i=s_i^6$, and then the resulting $U_i'$ can be extended to $\widetilde{U_i'}$ such that the singular fiber at $0$ is of $\mathrm{I}_{2b}$-type and $\mathrm{II}^*$-type respectively. Consider the fiber product $\widetilde{U}=\widetilde{U_1'}\times_{\mathbb{P}^1}\widetilde{U_2'}$. It's still a normal projective variety. Quotient $\widetilde{U}$ by the natural $\mathbb{Z}_6$ action. It's clear that outside the singular fiber, $\widetilde{U}/\mathbb{Z}_6$ is isomorphic to the fiber product of $U_1\backslash F_1\times_{\mathbb{P}^1}U_2\backslash F_2$. Hence $\widetilde{U}/\mathbb{Z}_6$ is a natural compactification. Note that $(z,v_1,v_2)=(s^6,s^3w_1,sw_2)$, where $(z,v_1)=(s^6,s^3w_1)$, $(z,v_2)=(s^6,sw_2)$ are coordinates on $U_1'$ and $U_2'$ respectively and $(z,v_1,v_2)$ is the coordinate on $\widetilde{U}$. Now
\begin{equation*}
\Omega=\frac{k(z)}{z^2}dz\wedge dv_1\wedge dv_2=\frac{6k(s^6)}{s^3}ds\wedge dw_1\wedge dw_2. 
\end{equation*}Hence $K_{\widetilde{X}}=-\frac12 [\widetilde{F}]$.

(3b) The period in the coordinate of $(z,v_1,v_2)$ is 
\begin{align*}
\tau_1(z)=z^{\frac12},\; \tau_2(z)=\frac{b}{2\pi i}z^{\frac12}\log z,\; \tau_3(z)=\left(1-z^{\frac{m}{3}}\right)z^{\frac13},\; \tau_4(z)=\zeta_3\left(1-\zeta_3z^{\frac{m}{3}}\right)z^{\frac13}.
\end{align*}
Plugging these data into formula \eqref{semi-flat 7}, we get
\begin{align*}
\omega_{\operatorname{sf},\varepsilon}&=\frac{i\sqrt{3}b|k|^2\left|\log |z|\right|\left(1-|z|^{\frac{2m}{3}}\right)}{\pi\varepsilon^2|z|^{\frac73}}dz\wedge d\bar{z}\\
&+\frac{i\pi \varepsilon}{4b|z|\left|\log |z|\right|}\left\{dv_1-\left(\frac{v_1}{z}-\frac{i\operatorname{Im}\left(z^{-\frac12}v_j\right)}{4z^{\frac12}\left|\log |z|\right|} \right)dz\right\}\wedge \left\{d\bar{v}_1-\left(\frac{\bar{v}_1}{\bar{z}}+\frac{i\operatorname{Im}\left(z^{-\frac12}v_1\right)}{4\bar{z}^{\frac12}\left|\log |z|\right|} \right)d\bar{z}\right\}\\
&+\frac{i}{2}\frac{\varepsilon}{\frac{\sqrt{3}}{2}\left(1-|z|^{\frac{2m}{3}}\right)|z|^{\frac23}}\left(d v_1-\frac13 \left(\frac{v_1}{z}+\mathrm{Error}_1\right) d z\right) \wedge\left(d \bar{v}_1-\frac13 \left(\frac{\bar{v}_1}{\bar{z}}+\overline{\mathrm{Error}_1}\right) d \bar{z}\right)
\end{align*}

Let $\Delta^*=\left\{z\in X|0<z<\frac12\right\}$, $U=f^{-1}(\Delta^*)$, $z_x=f(x)$ for $x\in U$. For fixed $x_0\in U$, define $r_x=\operatorname{dist}_M(x_0,x)$. Let $g=\frac{i|k|^2\left|\log |z|\right|^2}{\pi^2\varepsilon^2|z|^2}dz\wedge d\bar{z}$.

Note that the diameter of $f^{-1}(z)$ is approximately $\frac{C\left|\log|u|\right|}{\sqrt{\left|\log|u|\right|}}=C\left|\log|u|\right|^{\frac12}=C'\left|\log|z|\right|^{\frac12}$; The $g-$length of $\{z'\in \Delta^*:|z'|=|z|\}$ is appraximately $2\pi \frac{\left|\log|z|\right|^{\frac12}}{|z|^{\frac76}}|z|=2\pi \frac{\left|\log|z|\right|^{\frac12}}{|z|^{\frac16}}$; For fixed $z_0$ and any $z$ lying on the line $0z_0$, the radial distance from $z_0$ to $z$ is approximately $\left|\int_{|z_0|}^{|z|}\frac{|\log r|^{\frac12}}{r}\right|\sim \frac{\left|\log |z|\right|^{\frac12}}{|z|^{\frac16}}$ as $z\to 0$. Thus for any fixed $x_0\in U$ and any $x\in U$ sufficiently close to the singular fiber, we have 
\begin{align*}
r_x\sim \operatorname{dist}_g(z_x,z_{x_0})\sim \frac{\left|\log |z_x|\right|^{\frac12}}{|z_x|^{\frac16}}.
\end{align*}

Note $f:U\to \Delta^*$ is a Riemannian submersion whose fiber volume is $\varepsilon$ so $\operatorname{Vol}(f^{-1}(B),g_{\operatorname{sf},\varepsilon})=\varepsilon\cdot \operatorname{Vol}(B,g)$, for any $B\subset \Delta^*$. Moreover $f$ is bi-Lipschitz, hence 
\begin{align*}
\operatorname{Vol}_{g_{\operatorname{sf},\varepsilon}}(B(x_0,s))\sim C\int_0^{2\pi}d\theta\int_{|z_{x_0}|}^{|z_x|}\frac{|\log r|}{r^{\frac73}}rdr\wedge d\theta\sim C \frac{\left|\log|z_x|\right|}{|z_x|^{\frac13}}\sim C r_x^{2},
\end{align*}i.e. the volume growth of geodesic ball is of order $2$.

Next, we prove $\left|B\left(x,\frac12 r_x\right)\right|\geq \frac{1}{C}r_x^{2}$, if $r_x>>1$. Recall that the diameter of $f^{-1}(z)$ and the $g$-length of $\{z'\in \Delta:|z'|=|z|\}$ are of order  $\left|\log|z|\right|^{\frac12}$, $\frac{\left|\log|z|\right|^{\frac12}}{|z|^{\frac16}}$ respectively, so 
\begin{equation*}
B\left(x,\frac12 r_x\right)\supset\left\{y\in U:|z_x|<|z_y|<(1+\alpha)|z_x|,\; \left|\operatorname{arg}\frac{z_y}{z_x}\right|<\alpha\right\},
\end{equation*}for all $x\in U$, $r_x>>1$ and small $\alpha>0$. According to coarea formula,
\begin{align*}
\left|B\left(x,\frac12 r_x\right)\right|\geq 2\pi\int_{|z_x|}^{(1+\alpha)|z_x|}\frac{|\log r|}{r^{\frac73}}rdr\sim \frac{\log|z_x|}{|z_x|^{\frac13}}\sim r_x^{2}.
\end{align*}

Hence $\omega_{\operatorname{sf},\varepsilon}$ is $\operatorname{SOB}\left(2\right)$.

Now we compute the asymptotic cone of $M$. Note that the diameter of fiber $f^{-1}(z_x)$ is only of order $|\log|z_x||^{\frac12}$, hence we only need to compute the tangent cone of $g$. Define $
\Phi_\lambda:\alpha\mapsto \left(\frac{\alpha}{\alpha_\lambda}\right)^{-6}$, where $\alpha_\lambda\left|\log |\alpha_\lambda|\right|^{-\frac12}=\lambda$, then
\begin{align*}
\Phi_\lambda^*(\lambda^2g)=\frac{216\sqrt{3}bi|k|^2}{\varepsilon^2}\left|1-\frac{\log |\alpha|}{\log|\alpha_\lambda|}\right|d\alpha\wedge d\bar{\alpha}\stackrel{\lambda\to 0}{\longrightarrow} \frac{216\sqrt{3}bi|k(0)|^2}{\varepsilon^2}d\alpha\wedge d\bar{\alpha}.
\end{align*}Hence the tangent cone at infinity is a metric cone of angle $\frac{\pi}{3}$.

\begin{remark}

$\Omega_1\times\Omega_2\otimes s$ provides a global trivializing section of the trivial bundle $K_{X_1\times X_2}\otimes \mathcal{O}_{X_1\times X_2}(X)$,which implies that the invertible sheaf $(K_{X_1\times X_2}\otimes \mathcal{O}_{X_1\times X_2}(X))|_X$ over $X$ is trivial. However, we cannot conclude that $K_X$ is trivial, because $K_X$ is not well defined for a non-normal variety $X$. Moreover, the Poincar\'{e} residue $\Omega$ can only be defined on $M$, indicating $\Omega$ provides a global trivializing section of $K_M$. This observation is quite important for constructing complete Calabi-Yau metrics, because for all the cases where the fiber product variety $X$ is normal, $K_X$ is indeed trivial. To be more precise, if all the irreducible components of the two fibers $F_1$, $F_2$ are reduced (i.e. of multiplicity $1$, e.g. $\mathrm{I}_b,\mathrm{II},\mathrm{III},\mathrm{IV}$ type singular fibers), the resulting $X$ is normal. From \cite{kapustka2009fiber} and \cite{schoen1988fiber}, we know further that for $\mathrm{I}_n\times \mathrm{I}_m$, $\mathrm{I}_n\times \mathrm{II}$, $\mathrm{I}_n\times \mathrm{III}$, $\mathrm{I}_n\times \mathrm{IV}$, $\mathrm{II}\times \mathrm{II}$, $\mathrm{II}\times \mathrm{III}$, $\mathrm{III}\times \mathrm{III}$, $\mathrm{IV}\times \mathrm{IV}$ cases, we can carry out crepant resolution to $X$. Thus $\Omega$ can be lifted to a global holomorphic volume form on the resolution space. So, it is impossible in these cases to construct a complete Calabi-Yau metrics on $M$ by this method.

\end{remark}

\section{Perturbation of the semi-flat ansatz}\label{perturbation}

In this section, we aim to extend the semi-flat ansatz $\omega_{\operatorname{sf},\varepsilon}$ constructed in the previous section to an ambient metric in $M$. We will then employ the Tian-Yau-Hein package to perturb it into a genuine complete Calabi-Yau metric, ensuring that its asymptotic behavior resembles that of the ansatz. Compared to the elliptic fibration case studied by Hein, this approach may encounter additional topological obstructions.

Let $f: X \rightarrow \mathbb{P}^1$ be the abelian fibration constructed in Section \ref{non-isotrivial models}. Let $F=f^{-1}(p)$ be the singular fiber. Fix a small disk $\Delta\subset \mathbb{P}^1$ around $p$. Then $f$ has no singular fibers over $\Delta^*$. Write $\left.X\right|_{\Delta^*}=\left(\Delta_z^* \times \mathbb{C}^2\right)\big{/}\Lambda(z)$. We aim to find a metric $\omega$ on $X\backslash F$ and glue it with the semi-flat ansatz $\omega_{\operatorname{sf},\varepsilon}(\alpha)$ to produce a complete K{\"a}her metric $\omega_{\varepsilon}$, which coincide with $\omega_{\operatorname{sf},\varepsilon}(\alpha)$ near the singular fiber. As a result, the asymptotic behavior of $\omega_{\varepsilon}$ is described by that of $\omega_{\operatorname{sf},\varepsilon}(\alpha)$. Here 
\begin{equation}
\begin{aligned}
\omega_{\mathrm{sf}, \varepsilon}(\alpha)&=c(\alpha)i|g|^2 \frac{\operatorname{Im}\left(\bar{\tau}_1 \tau_2\right)\operatorname{Im}\left(\bar{\tau}_3 \tau_4\right)}{\varepsilon} d z \wedge d \bar{z}\\
&+\frac{i}{2} \frac{\varepsilon}{\operatorname{Im}\left(\bar{\tau}_1 \tau_2\right)}(d w_1-\Gamma^1 d z) \wedge(d \bar{w}_1-\bar{\Gamma}^1 d \bar{z})\\
&+\frac{i}{2} \frac{\varepsilon}{\operatorname{Im}\left(\bar{\tau}_3 \tau_4\right)}(d w_2-\Gamma^2 d z) \wedge(d \bar{w}_2-\bar{\Gamma}^2 d \bar{z}),
\end{aligned}
\end{equation}
where $c(\alpha)$ is chosen such that $\omega^3_{\operatorname{sf},\varepsilon}=\alpha\Omega\wedge \bar{\Omega}$. We need this parameter $\alpha$ to adjust the metric properly. Then we can apply the Tian-Yau-Hein's package to get a complete Calabi-Yau metric.

Next, we outline the obstructions encountered and the methods employed to find such a metric. Portions of this process are essentially identical to those described in Hein's thesis \cite{hein2010gravitational}, and thus we will omit redundant details.

\textbf{The First difficulty}: We need to prove a $\partial\bar{\partial}-$type lemma on $X\backslash F$. There are mainly two obstructions: One is purely topological and the other is analytic.

The first obstruction is to find a K{\"a}hler metric $\omega$ on $X\backslash F$ such that $[\omega]=[\omega_{\operatorname{sf},\varepsilon}(1)]$ in the de Rham cohomology group $H^2(X|_{\Delta^*},\mathbb{R})$. By invoking the Leray spectral sequence and following an argument identical to that in Hein's thesis \cite{hein2010gravitational}, we conclude that  $H_2(X|_{\Delta^*},\mathbb{Z})$ is generated by the six generators of $H_2(\hat{F},\mathbb{Z})\cong \mathbb{Z}^6$, where $\hat{F}$ is a general smooth fiber over $\Delta^*$ and those so called bad cycles.

\begin{definition}[Bad cycle]

Consider the restriction of the fibration to a unit disk centered at $z=0$, that is $f:X|_{\Delta^*}\to \Delta^*_z$. Let $\hat{F}$ be a general smooth fiber over a point in $\Delta^*$. Take a simple loop $\gamma\subset F$ such that $[\gamma]\in H_1(\hat{F},\mathbb{Z})$ is indivisible and invariant under the monodromy action. Lift another simple loop $\gamma'\subset \Delta^*$ to $X$, then move $\gamma$ around the puncture along the lifted loop. Now the union of the translates of $\gamma$ is a $T^2$ embedded in $f^{-1}(\gamma')$ and this is called a bad cycle.

\end{definition}

\begin{remark}

The bad cycle is formed by those non-vanishing cycles rotating around the puncture.

\end{remark}

In order to guarantee that $[\omega]=[\omega_{\operatorname{sf},\varepsilon}(1)]$ in $H^2(X|_{\Delta^*},\mathbb{R})$, we only need to show that 
\begin{itemize}

\item $\omega|_{\hat{F}}=\omega_{\operatorname{sf},\varepsilon}(1)|_{\hat{F}}$ for any fiber $\hat{F}$ over $\Delta^*$;

\item $\langle \omega,C\rangle=\langle \omega_{\operatorname{sf},\varepsilon}(1),C\rangle =0$ for any bad cycles $C$.

\end{itemize}

Note that $M=X\backslash F=X_1\backslash F_1\times_{\mathbb{P}^1}X_2\backslash F_2$ is fiber product of two rational elliptic surfaces. Let $\omega_1$, $\omega_2$ be two K{\"a}hler metrics on $X_1\backslash F_1$ and $X_2\backslash F_2$ respectively such that $\langle \omega_i,C_i\rangle=0$ and $\left[\omega_i|_{\Delta^*_i}\right]=\left[\omega_{\operatorname{sf},\varepsilon}^i(1)\right]$, where $\Delta^*_i$ denotes a neighborhood near the singular fiber of $X_i\to\mathbb{P}^1$, $C_i$ denotes a bad cycle on $X_i|_{\Delta^*_i}$ and
\begin{align*}
&\omega_{\operatorname{sf},\varepsilon}^1(1)=c(\alpha)i|g|^2 \frac{\operatorname{Im}\left(\bar{\tau}_1 \tau_2\right)}{\varepsilon} d z_1 \wedge d \bar{z}_1+\frac{i}{2} \frac{\varepsilon}{\operatorname{Im}\left(\bar{\tau}_1 \tau_2\right)}(d w_1-\Gamma^1 d z_1) \wedge(d \bar{w}_1-\bar{\Gamma}^1 d \bar{z}_1)\\
&\omega_{\operatorname{sf},\varepsilon}^2(1)=c(\alpha)i|g|^2 \frac{\operatorname{Im}\left(\bar{\tau}_3 \tau_4\right)}{\varepsilon} d z_1 \wedge d \bar{z}_1+\frac{i}{2} \frac{\varepsilon}{\operatorname{Im}\left(\bar{\tau}_3 \tau_4\right)}(d w_2-\Gamma^2 d z_1) \wedge(d \bar{w}_2-\bar{\Gamma}^2 d \bar{z}_1).
\end{align*}$\omega_1,\omega_2$ always exist and can be obtained by restriction of a global K{\"a}hler metric from $X_i$ (This relies heavily on the fact that $H_2(T^2,\mathbb{Z})=\mathbb{Z}$). Now take $\omega=\iota^*(\omega_1+ \omega_2)$, where $\iota:X\hookrightarrow X_1\times X_2$ is the inclusion map. By definition, we have $\left\langle\omega,C \right\rangle=\left\langle\omega_1+\omega_2,\iota_*C \right\rangle$, where $C$ is a bad cycle on $X|_{\Delta^*}$. Since the pairing kills all the torsion part, we only need to calculate $\iota_*C$ in $\mathbb{R}$-coefficients. According to the K{\"u}nneth formula, $H_2(X_1|_{\Delta^*}\times X_2|_{\Delta^*},\mathbb{R})\cong H_2(X_1|_{\Delta^*},\mathbb{R})\oplus H_1(X_1|_{\Delta^*},\mathbb{R})\otimes H_1(X_2|_{\Delta^*},\mathbb{R})\oplus H_2(X_2|_{\Delta^*},\mathbb{R})$. Since $X|_{\Delta^*}$ is fiber product of two elliptic fibrations, it's clear that the bad cycles of $X|_{\Delta^*}$ can be linearly generated by a loop on $\Delta^*$ times those invariant cycles in $\hat{F}_1$ and $\hat{F}_2$, where $\hat{F}_i$ is smooth fiber of $X_i\to \mathbb{P}^1$. Without losing generality, we assume that the bad cycle $C$ is generated by a invariant cycle in $\hat{F}_1$ and a loop on $\Delta^*$. According to the definition of the pushforward,  $\iota_*C$ lies in $H_2(X|_{\Delta^*},\mathbb{R})$.  This homology class decomposes into three parts:
\begin{itemize}

\item A bad cycle in $H_2(X_1|_{\Delta^*},\mathbb{R})$;

\item A nontrivial contribution from the tensor product $H_1(X_1|_{\Delta^*},\mathbb{R})\otimes H_1(X_2|_{\Delta^*},\mathbb{R})$;

\item A trivial component in $H_2(X_2|_{\Delta^*},\mathbb{R})$.

\end{itemize}

 Hence $\left\langle\omega_1+\omega_2,\iota_*C \right\rangle=0$, and $\omega$ satisfies our demands.

\begin{remark}

We call this obstruction topological because it essentially arises from $H^2(T^4,\mathbb{Z})\cong \mathbb{Z}^6$ and those bad cycles. Our proof relies heavily on the fiber product structure.

\end{remark}


The next obstruction comes from $H^{0,1}(X|_{\Delta^*})$. This is analogous to the classic $\partial\bar{\partial}$-lemma on a compact K{\"a}hler manifold. We have already proven that $\omega_{\operatorname{sf},\varepsilon}(1)-\omega$ is $d$-exact, and now we want to show that it's $\partial\bar{\partial}$-exact. This is in general unrealistic. However, we could translate $\omega_{\operatorname{sf},\varepsilon}(1)$ by pulling back along a section $T$, then require $T^*\omega_{\operatorname{sf},\varepsilon}(1)-\omega$ to be $\partial\bar{\partial}-$exact.

Write $\omega_{\operatorname{sf},\varepsilon}(1)-\omega=d\zeta$, where $\zeta$ is a smooth real 1-form on $\left.X\right|_{\Delta^*}$. Then $\xi:=\zeta^{0,1}$ satisfies $\bar{\partial} \xi=0$. From the Leray spectral sequence, we get a long exact sequence
\begin{equation*}
0\to H^1(\Delta^*,f_*\mathcal{O}_{X|_{\Delta^*}})\to H^1(X|_{\Delta^*},\mathcal{O}_{X|_{\Delta^*}})\to H^0(\Delta^*,R^1f_*\mathcal{O}_{X|_{\Delta^*}})\to H^2(\Delta^*,f_*\mathcal{O}_{X|_{\Delta^*}})\to \cdots.
\end{equation*}Since $\Delta^*$ is a Stein manifold, and $f_*\mathcal{O}_{X|_{\Delta^*}}=\mathcal{O}_{\Delta^*}$(fiber is a compact complex manifold), we have by Cartan's Theorem B that $H^i(\Delta^*,R^1f_*\mathcal{O}_{X|_{\Delta^*}})=0$, $\forall i\geq 1$. Thus
\begin{equation*}
H^{0,1}(X|_{\Delta^*})\cong H^1(X|_{\Delta^*},\mathcal{O}_{X|_{\Delta^*}})\cong H^0(\Delta^*,R^1f_*\mathcal{O}_{X|_{\Delta^*}}).
\end{equation*}The proper base change property implies that $(R^1f_*\mathcal{O}_{X|_{\Delta^*}})_z=H^{0,1}(f^{-1}(z))\cong \mathbb{C}^2$. Note that the fiberwise constant $(0,1)$-form $\eta_1=\frac{d\bar{v}_1}{ \operatorname{Im}\left(\bar{\tau}_1 \tau_2\right)}$ and  $\eta_2=\frac{d \bar{v}_2} { \operatorname{Im}\left(\bar{\tau}_1 \tau_2\right)}$ define two holomorphic sections of $R^1f_*\mathcal{O}_{X|_{\Delta^*}}$. If we restrict $\xi$ to $f^{-1}(z)$ and write the constant part of the restriction as $a_1(z) \eta_1(z)+a_2(z)\eta_2(z)$, we can identify the class of $\xi$ in $H^{0,1}\left(\left.X\right|_{\Delta^*}\right)$ with two holomorphic functions $a_1,a_2: \Delta^* \rightarrow \mathbb{C}$. 

For any two holomorphic functions $\sigma_1,\sigma_2: \Delta^* \rightarrow \mathbb{C}$ viewed as a section of $X$, the associated vertical translation $T$ is given by $T(z, v_1,v_2)=(z, v_1+\sigma_1(z),v_2+\sigma_2(z))$. By direct computation, we get
\begin{align*}
\begin{aligned}
& T^* \omega_{\mathrm{sf},\varepsilon}(1)-\omega_{\mathrm{sf},\varepsilon}(1)=d \tilde{\zeta}+\frac{i}{2} \frac{\varepsilon}{\operatorname{Im}\left(\bar{\tau}_1 \tau_2\right)}\sum\limits_{i=1}^2\left|\sigma_i^{\prime}(z)-\Gamma^i(z, \sigma_1(z),\sigma_2(z))\right|^2 d z \wedge d \bar{z} ;\\
& \tilde{\xi}=\tilde{\zeta}^{0,1}:=\frac{i}{2} \frac{\varepsilon}{\operatorname{Im}\left(\bar{\tau}_1 \tau_2\right)} \left\{\sum\limits_{i=1}^2\sigma_i(z)(d \bar{v_i}-\bar{\Gamma}^i(z, v) d \bar{z})\right\}
\end{aligned}
\end{align*}
Thus, if we choose $\sigma_i(z)=\frac{2 \sqrt{-1}}{\varepsilon} a_i(z)$, then $(\xi+\tilde{\xi})|_{f^{-1}(z)}=0\in H^{0,1}(f^{-1}(z))$, $\forall z\in \Delta^*$. By the definition of sheaf, we get $\xi+\tilde{\xi}=0\in H^{0,1}(X|_{\Delta^*})$, i.e. $\xi+\tilde{\xi}$ is $\bar{\partial}$-exact. Also note that the correction term $\frac{i}{2} \frac{\varepsilon}{\operatorname{Im}\left(\bar{\tau}_1 \tau_2\right)}\sum\limits_{i=1}^2\left|\sigma_i^{\prime}(z)-\Gamma^i(z, \sigma_1(z),\sigma_2(z))\right|^2 d z \wedge d \bar{z} $ is lifted from an open Riemann surface and thus has a smooth real potential.

In conclusion, we have proven:

\begin{proposition}[$\partial\bar{\partial}-$lemma]

There exists a holomorphic section $\tilde{\sigma}$ of $f$ over $\Delta^*$ and a smooth function $u_1:X|_{\Delta^*}\to \mathbb{R}$ such that $T^*\omega_{\operatorname{sf},\varepsilon}(1)-\omega=i\partial \bar{\partial u_1}$, where $T$ denotes the translation by $\tilde{\sigma}$ relative to the initially chosen section $\sigma$.

\end{proposition}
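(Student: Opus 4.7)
The plan is to combine the two preceding reductions into one clean cohomological argument. From the topological step we already have $\omega = \iota^*(\omega_1+\omega_2)$ on $X|_{\Delta^*}$ with $[\omega] = [\omega_{\operatorname{sf},\varepsilon}(1)]$ in $H^2(X|_{\Delta^*},\mathbb{R})$, the matching on both the six fiber classes and on all bad cycles having been checked via the K\"unneth decomposition of $\iota_*$. Hence $\omega_{\operatorname{sf},\varepsilon}(1)-\omega = d\zeta$ for a smooth real $1$-form $\zeta$, and its $(0,1)$-part $\xi := \zeta^{0,1}$ is $\bar\partial$-closed and represents a class in $H^{0,1}(X|_{\Delta^*})$.

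Next, I would extract the obstruction as an explicit holomorphic pair via the Leray spectral sequence. Since $\Delta^*$ is Stein and $f_*\mathcal{O}_{X|_{\Delta^*}} = \mathcal{O}_{\Delta^*}$, Cartan's Theorem B gives $H^{0,1}(X|_{\Delta^*}) \cong H^0(\Delta^*, R^1f_*\mathcal{O})$, whose stalk at $z$ is $H^{0,1}(f^{-1}(z))\cong \mathbb{C}^2$ by proper base change. The fiberwise constant $(0,1)$-forms $\eta_1, \eta_2$ coming from $d\bar v_1, d\bar v_2$ furnish a frame; extracting the fiberwise-constant part of $\xi|_{f^{-1}(z)}$ and expanding as $a_1(z)\eta_1(z) + a_2(z)\eta_2(z)$ produces two holomorphic functions $a_1, a_2 : \Delta^* \to \mathbb{C}$ that represent $[\xi]$.

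The decisive move is to kill $[\xi]$ by a vertical translation. Set $\sigma_i(z) := \tfrac{2i}{\varepsilon}\, a_i(z)$, let $\tilde\sigma = (\sigma_1,\sigma_2)$, and define $T(z,v_1,v_2) = (z,v_1+\sigma_1,v_2+\sigma_2)$. The explicit computation from the preceding discussion gives $T^*\omega_{\operatorname{sf},\varepsilon}(1) - \omega_{\operatorname{sf},\varepsilon}(1) = d\tilde\zeta + \rho$, where $\tilde\xi := \tilde\zeta^{0,1}$ has fiberwise-constant part exactly $-a_i\eta_i$, and $\rho$ is a real $(1,1)$-form pulled back from $\Delta^*$. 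Consequently $[\xi + \tilde\xi]$ vanishes fiberwise in $H^{0,1}(f^{-1}(z))$ for every $z$, and by the sheaf identification above this forces $[\xi + \tilde\xi] = 0$ globally, so $\xi + \tilde\xi = \bar\partial v$ for some smooth complex function $v$ on $X|_{\Delta^*}$.

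It remains to assemble a real potential. Adding the two equalities yields $T^*\omega_{\operatorname{sf},\varepsilon}(1) - \omega = d(\zeta + \tilde\zeta) + \rho$. Since the left-hand side is real of type $(1,1)$ and the $(0,1)$-part of $\zeta + \tilde\zeta$ is $\bar\partial v$, the real-potential trick applied to $2\operatorname{Re}(-iv)$ exhibits $d(\zeta+\tilde\zeta)$ as $i\partial\bar\partial w$ for a smooth real $w$. The remainder $\rho$, being pulled back from the Stein Riemann surface $\Delta^*$, is $i\partial\bar\partial h$ for some real smooth $h$ on $\Delta^*$, and then $u_1 := w + f^*h$ does the job. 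The main obstacle, in my view, lies in the second step, namely setting up the sheaf-theoretic identification cleanly so that the fiberwise-constant extraction really realizes $[\xi]$ as a holomorphic pair rather than a merely smooth one; once that is in hand the translation is dictated algebraically and the descent to a real potential is routine.
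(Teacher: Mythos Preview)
Your proposal is correct and follows essentially the same route as the paper: the topological reduction to $d$-exactness, the Leray/Cartan B identification $H^{0,1}(X|_{\Delta^*})\cong H^0(\Delta^*,R^1f_*\mathcal{O})$, the extraction of the holomorphic pair $(a_1,a_2)$ via the fiberwise-constant frame, the translation by $\sigma_i=\tfrac{2i}{\varepsilon}a_i$ to annihilate $[\xi]$, and the observation that the residual base term has a real potential. The only difference is cosmetic: you spell out the passage from $\xi+\tilde\xi=\bar\partial v$ to a real $i\partial\bar\partial$-potential via $2\operatorname{Im} v$, whereas the paper leaves this implicit.
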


By using the $\partial\bar{\partial}$-lemma, we can glue the metrics $\omega$ and $\omega_{\operatorname{sf},\varepsilon}(1)$. However, direct gluing by cut-off functions may cause some troubles.

\textbf{The second difficulty:} $\omega$ and $\omega_{\operatorname{sf},\varepsilon}(1)$ may differ a lot in their overlapping region. Gluing by cut-off function could create large negative components, which makes the resulting form fail to be a metric.

\textbf{The third difficulty:} $\omega_{\operatorname{sf},\varepsilon}(1)$ is $\mathrm{SOB}(\beta)$ manifold, where $0<\beta<2$. Thus in order to apply the Tian-Yau-Hein's package, we need an integrable condition
\begin{equation*}
\int_{X\backslash F}\left(\omega^3_{\operatorname{sf},\varepsilon}(\alpha)-\alpha i\Omega\wedge \bar{\Omega}\right)=0.
\end{equation*}

The second and third difficulties should be handled simultaneously. Nevertheless, for the second and third difficulties, Hein's proof (see \cite{hein2010gravitational}, Chapter 5, Claims 2, 3, and 4) applies to our example after modifications. For completeness, we still provide the detailed proof.

The global ansatz metric $\omega_0 = \omega_0(\alpha, t)$ we construct will depend on two parameters, $\alpha$ and $t$. The idea is that the two metrics we wish to glue are sufficiently close along the fibers, but their difference in the base term $dz \wedge d\bar{z}$ may violate positivity. To compensate, we pull back a sufficiently large $dz \wedge d\bar{z}$ term from the base (corresponding to the parameter $t$). On the other hand, we use the parameter $\alpha$ to achieve the desired integrability condition.

We can express the difference between $\omega_{\text{sf},\varepsilon}(\alpha)$ and $\omega_{\text{sf},\varepsilon}(1)$ as a potential function, namely,

\begin{equation}\label{diff}
    \omega_{\text{sf},\varepsilon}(\alpha)-\omega_{\text{sf},\varepsilon}(1)=i(\alpha-1)|k(z)|^2\frac{\operatorname{Im} (\tau_1\bar{\tau}_2) \operatorname{Im} (\tau_3\bar{\tau}_4)}{\varepsilon}\frac{dz\wedge d\bar{z}}{|z|^6}=(\alpha-1)i\partial\bar{\partial}u.
\end{equation}

Then by the $\partial\bar{\partial}$-lemma, we have

\begin{equation*}
T^*\omega_{\text{sf},\varepsilon}(\alpha) = \omega + i\partial\bar{\partial}u_\alpha, \quad u_\alpha := u_1 + (\alpha - 1)u. 
\end{equation*}

Here, $u_1$ and $u$ are not uniquely determined, but we fix an arbitrary choice for now and later adjust it using a harmonic function $v$.

Fix a sufficiently small parameter $r$, a sufficiently small parameter $s$ depending on $r$, and a positive constant $C_0$ satisfying $C_0 r < 1$ and $C_0 s < r$. On $\mathbb{P}^1$, fix a $(1,1)$-form $\beta$ such that $0 \leq \beta \leq dz \wedge d\bar{z}$, $\mathrm{supp}(\beta) \subset \Delta(r+3s)\setminus \Delta(r)$, and $\beta \equiv dz \wedge d\bar{z}$ on $\Delta(r+2s)\setminus \Delta(r+s)$. For $\alpha > 0$ and $t > 0$, we define

\begin{equation*}
\omega_0(\alpha, t) := 
\begin{cases} 
\omega + t\beta + i\partial\bar{\partial}(\psi\tilde{u}_\alpha) & \text{outside } M|_{\Delta(r)}; \\
\omega + t\beta + i\partial\bar{\partial}u_\alpha & \text{on } \Delta(r+s)^*,
\end{cases}
\end{equation*}

where
\begin{equation*}
\tilde{u}_\alpha := u_1 + (\alpha - 1)(u - v) \quad \text{in } \Delta(r+3s) \setminus \Delta(r),
\end{equation*}
$v$ is the harmonic function on $\Delta(r+3s) \setminus \Delta(r)$ with the same boundary values as $u$, and $\psi = \psi(r,s)$ is a cutoff function satisfying $0 \leq \psi \leq 1$, $\mathrm{supp}(\psi) \subset \Delta(r+2s)$, $s|\psi_z| + s^2|\psi_{z\bar{z}}| \leq C_0$, and $\psi \equiv 1$ on $\Delta(r+s)$. Note that $\omega_0(\alpha, t)$ coincides with $\omega$ outside $M|_{\Delta(r+3s)}$, and equals exactly $T^*\omega_{\text{sf},\varepsilon}(\alpha)$ on $\Delta(r)^*$.

We then define:
\begin{equation*}
    I(\alpha, t) := \int_M \left( \omega_0(\alpha, t)^3 - \alpha i\Omega \wedge \bar{\Omega} \right) = 0.
\end{equation*}
Our goal is to choose suitable $\alpha$ and $t$ so that $\omega_0(\alpha, t)$ is positive definite while $I(\alpha, t) = 0$.

The key observation is that $I(\alpha, t)$ is bilinear in the two variables: the terms in $\omega_0(\alpha, t)^3$ involving $t$ and $\alpha$ are in fact all of type $dz \wedge d\bar{z}$, so higher-order or mixed terms do not appear. Naively speaking, if we fix one variable (say $\alpha$) and vary the other (say $t$), then as long as the coefficient in front of the varying variable is non-zero (which generically holds), we can choose an appropriate value for that parameter. However, the issue is that $\omega_0(\alpha, t)$ is only positive definite when $t$ is sufficiently large. Therefore, we need to carefully estimate the range of $t$ depending on $\alpha$, as well as the coefficients of these two parameters in $I(\alpha, t)$. For this, we require an analytic lemma:

\begin{lemma}[\cite{hein2010gravitational}, Chapter 5, Claim 2]\label{normalization}
    For a sufficiently small parameter $r$, a sufficiently small parameter $s$ depending on $r$, and a positive constant $C_0$ satisfying $C_0 r < 1$ and $C_0 s < r$, if $v$ is the harmonic function on $\Delta(r+3s) \setminus \Delta(r)$ that agrees with $u$ on the boundary, then we have the estimate

    \begin{equation*}
    \sup_{\Delta(r+2s)\setminus\Delta(r+s)} \left( s^{-2}|u-v| + s^{-1}|\nabla (u-v)| \right) \leq C_0 \sup_{\Delta(r+3s)\setminus\Delta(r)} u_{z\bar{z}}.
    \end{equation*}

\end{lemma}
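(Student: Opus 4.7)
The plan is to reduce the problem to a Dirichlet problem for the Poisson equation on the annulus $\Delta(r+3s) \setminus \Delta(r)$ and combine a radial barrier argument with a standard interior gradient estimate. First I would set $w := u - v$; since $v$ is harmonic and agrees with $u$ on the boundary of the annulus, $w$ solves
\[
\Delta w \;=\; \Delta u \;=\; 4\,u_{z\bar z} \quad \text{on } \Delta(r+3s) \setminus \Delta(r), \qquad w\big|_{\partial(\Delta(r+3s) \setminus \Delta(r))} = 0.
\]
Write $M := \sup_{\Delta(r+3s) \setminus \Delta(r)} u_{z\bar z}$, so that $|\Delta w| \leq 4 M$.

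For the sup estimate I would construct a rotationally symmetric barrier of the form $\phi(\rho) = -M\rho^2 + a + b\log\rho$, which satisfies $\Delta \phi = -4M$, and choose $a,b$ so that $\phi(r) = \phi(r+3s) = 0$. Taylor-expanding in $\lambda := s/r$ (valid by the hypothesis $C_0 s < r$), the leading contributions from the $-M\rho^2$ and $b\log\rho$ pieces cancel at first order in $\lambda$, and the remainder gives $\phi(\rho) \leq 4 M s^2 + O(Ms^3/r)$ on the middle annulus $\rho \in [r+s, r+2s]$. Since $\pm w - \phi$ is subharmonic and nonpositive on the boundary, the maximum principle yields $|w| \leq \phi \leq C_0\,M s^2$ there, provided $C_0$ absorbs the universal constants and the higher-order corrections.

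For the gradient estimate I would exploit the fact that for any $z_0 \in \Delta(r+2s) \setminus \Delta(r+s)$ the disc $B(z_0, s/2)$ lies inside the full annulus $\Delta(r+3s) \setminus \Delta(r)$, and apply the standard interior gradient estimate for Poisson's equation on this disc:
\[
|\nabla w(z_0)| \;\leq\; \frac{C}{s}\sup_{B(z_0, s/2)} |w| \;+\; C s \sup_{B(z_0, s/2)} |\Delta w| \;\leq\; C\!\left(\frac{Ms^2}{s} + s \cdot M\right) \;=\; C M s.
\]
Dividing by $s$ and combining with the previous sup estimate gives the stated bound after enlarging $C_0$ once more.

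The main obstacle is making the barrier computation sharp: the $-M\rho^2$ and $b\log\rho$ contributions cancel at first order in $s/r$, so a naive one-term estimate would only yield $|u-v| = O(Mrs)$, which is off by a factor $r/s \gg 1$ from what is needed; one must keep the Taylor expansion through second order to extract the correct $O(Ms^2)$ bound. Everything else is routine two-dimensional elliptic theory.
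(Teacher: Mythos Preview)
Your argument is correct. The paper does not actually prove this lemma; it merely quotes the statement from Hein's thesis (Chapter 5, Claim 2) and uses it as a black box in the proof of Proposition~\ref{positivity}. Your barrier-plus-interior-gradient approach is the standard way to establish such an estimate and fills this gap cleanly.

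One small point worth tightening: to feed the bound $|w|=O(Ms^2)$ into the interior gradient estimate on $B(z_0,s/2)$, you need it not just on the middle annulus $\{r+s\le\rho\le r+2s\}$ but on the slightly larger set $\{r+s/2\le\rho\le r+5s/2\}$ covered by these balls. In fact your barrier computation already gives this: writing $\rho=r+t$ with $t\in[0,3s]$, one has $\phi(r+t)=-2Mt^2+6Mst+O(Ms^3/r)$, which is $\le\tfrac{9}{2}Ms^2+O(Ms^3/r)$ on the \emph{entire} annulus, so there is no obstruction. You should also note (or use from context) that in the paper's application $u_{z\bar z}>0$, so that $M=\sup u_{z\bar z}$ genuinely controls $|\Delta w|$; otherwise one should replace $M$ by $\sup|u_{z\bar z}|$ throughout.
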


Now we show that as long as $t$ is sufficiently large (depending on $r, s, \alpha$), $\omega_0(\alpha, t)$ is indeed a metric on the entire manifold $M$. More precisely, we have:

\begin{proposition}\label{positivity}
    For a sufficiently small parameter $r$, and a sufficiently small parameter $s$ depending on $r$, there exist constants $C_0, C_1$ depending on $r, s$ but not on $\alpha$, such that $C_0 r < 1$, $C_0 s < r$, and if
    $$
    t > C_1 + C_0|\alpha - 1| \sup_{\Delta(r+3s)\setminus\Delta(r)} u_{z\bar{z}},
    $$
    then
    $$
    \omega_0(\alpha, t) \geq \frac{1}{2}(\omega + \psi i\partial\bar{\partial}u_\alpha) > 0.
    $$
\end{proposition}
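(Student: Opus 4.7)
The plan is to decompose $M$ according to the supports of $\psi$ and $\beta$ and to do the real work only on the transition shell $f^{-1}(\Delta(r+3s)\setminus\Delta(r))$. Outside $f^{-1}(\Delta(r+3s))$ one has $\omega_0(\alpha,t)=\omega$, and on $f^{-1}(\Delta(r))$ one has $\omega_0(\alpha,t)=T^*\omega_{\operatorname{sf},\varepsilon}(\alpha)$; in both regions the asserted bound is automatic because $\omega+\psi\,i\partial\bar\partial u_\alpha$ reduces respectively to $\omega$ and to $T^*\omega_{\operatorname{sf},\varepsilon}(\alpha)$, and $t$ plays no role.

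On the transition shell, the key algebraic observation is that since $v$ is harmonic on the base Riemann surface, $i\partial\bar\partial(u-v)=i\partial\bar\partial u$, so $i\partial\bar\partial\tilde u_\alpha=i\partial\bar\partial u_\alpha$. Expanding $i\partial\bar\partial(\psi\tilde u_\alpha)$ via Leibniz and regrouping yields
\begin{equation*}
\omega_0(\alpha,t)=\bigl(\omega+\psi\,i\partial\bar\partial u_\alpha\bigr)+t\beta+E,\quad E:=i\partial\psi\wedge\bar\partial\tilde u_\alpha+i\partial\tilde u_\alpha\wedge\bar\partial\psi+\tilde u_\alpha\,i\partial\bar\partial\psi,
\end{equation*}
where $\omega+\psi\,i\partial\bar\partial u_\alpha=(1-\psi)\omega+\psi\,T^*\omega_{\operatorname{sf},\varepsilon}(\alpha)$ is a convex combination of two Kähler metrics, hence positive. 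It therefore suffices to show that $t\beta+E\geq -\tfrac14\bigl(\omega+\psi\,i\partial\bar\partial u_\alpha\bigr)$ on the shell, which would give $\omega_0\geq \tfrac34(\omega+\psi\,i\partial\bar\partial u_\alpha)\geq \tfrac12(\omega+\psi\,i\partial\bar\partial u_\alpha)>0$.

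The next step is to estimate $E$ pointwise. The derivatives of $\psi$ are supported in $\Delta(r+2s)\setminus\Delta(r+s)$, where $\beta\equiv dz\wedge d\bar z$ and $|\psi_z|\leq C_0/s$, $|\psi_{z\bar z}|\leq C_0/s^2$. Splitting $\tilde u_\alpha=u_1+(\alpha-1)(u-v)$, the $u_1$ piece is $\alpha$-independent and bounded in terms of the uniform norms of $u_1$ and $\partial u_1$ on the fixed compact set $f^{-1}\bigl(\overline{\Delta(r+3s)\setminus\Delta(r)}\bigr)$. The mixed $dz\wedge d\bar v_j$ terms arising from fiber derivatives of $u_1$ are handled by Young's inequality for Hermitian $(1,1)$-forms,
\begin{equation*}
iA\,dz\wedge d\bar v_j+i\bar A\,dv_j\wedge d\bar z\geq -\varepsilon_0\,i\,dz\wedge d\bar z-\tfrac{|A|^2}{\varepsilon_0}\,i\,dv_j\wedge d\bar v_j,
\end{equation*}
which absorbs the $i\,dv_j\wedge d\bar v_j$ piece into the fiber component of $\omega+\psi\,i\partial\bar\partial u_\alpha$. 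Since the fiber part of $T^*\omega_{\operatorname{sf},\varepsilon}(\alpha)$, and hence of $\omega+\psi\,i\partial\bar\partial u_\alpha$, is $\alpha$-independent (recall that $\alpha$ enters $\omega_{\operatorname{sf},\varepsilon}(\alpha)$ only through the base coefficient $c(\alpha)$), choosing $\varepsilon_0$ large depending only on $r,s$ makes the absorbed quantity at most $\tfrac14(\omega+\psi\,i\partial\bar\partial u_\alpha)$ at the cost of a constant $C_1=C_1(r,s)$ in front of $dz\wedge d\bar z$. The $(\alpha-1)(u-v)$ contribution is purely base-directional, and Lemma \ref{normalization} gives $|u-v|\leq C_0 s^2\sup u_{z\bar z}$ and $|\nabla(u-v)|\leq C_0 s\sup u_{z\bar z}$ on the shell, so its $dz\wedge d\bar z$-coefficient is bounded by $C_0|\alpha-1|\sup_{\Delta(r+3s)\setminus\Delta(r)} u_{z\bar z}$. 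Invoking the hypothesis $t>C_1+C_0|\alpha-1|\sup u_{z\bar z}$ makes the $dz\wedge d\bar z$-contribution of $t\beta+E$ nonnegative, and the asserted bound drops out.

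The main technical obstacle will be handling the fiber derivatives of $u_1$, for which Lemma \ref{normalization} has no direct counterpart; the Young absorption above sidesteps this by exploiting the $\alpha$-independent fiber part of $\omega+\psi\,T^*\omega_{\operatorname{sf},\varepsilon}(\alpha)$, which is essential for keeping $C_1$ free of $\alpha$. This decoupling of the two parameters in the positivity bound is precisely what the subsequent bilinear-solvability argument for $I(\alpha,t)=0$ will require.
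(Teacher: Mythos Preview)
Your proposal is correct and follows essentially the same approach as the paper: reduce to the transition shell, write $\omega_0(\alpha,t)$ as the positive convex combination $\omega+\psi\,i\partial\bar\partial u_\alpha$ plus $t\beta$ plus an error $E$, control the base part of $E$ using Lemma~\ref{normalization} and the hypothesis on $t$, and absorb the mixed $dz\wedge d\bar v_j$ terms into the $\alpha$-independent fiber block. The only cosmetic difference is that you use Young's inequality where the paper checks leading principal minors of the corresponding $3\times 3$ matrix, and your observation that the fiber block is $\alpha$-independent (because $u$ depends only on $z$) is in fact a cleaner justification than the paper's somewhat indirect argument via the $\alpha=0$ case.
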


\begin{proof}
    It suffices to verify positivity in the region $\Delta(r+2s) \setminus \Delta(r+s)$. We expand the expression for $\omega_0(\alpha, t)$:
\begin{align*}
\omega_0(\alpha, t) &= \omega + t\beta + \psi i\partial\bar{\partial}u_\alpha + \Big[\psi_z u_{1,\bar{w_1}} idz \wedge d\bar{w_1} + \psi_{\bar{z}} u_{1,w_1} idw_1 \wedge d\bar{z} \\
&\quad + \psi_z u_{1,\bar{w_2}} idz \wedge d\bar{w_2} + \psi_{\bar{z}} u_{1,w_2} idw_2 \wedge d\bar{z} \\
&\quad + \big\{ \psi_{z\bar{z}} \tilde{u}_\alpha + \psi_{\bar{z}} \tilde{u}_{\alpha,z} + \psi_z \tilde{u}_{\alpha,\bar{z}} \big\} idz \wedge d\bar{z} \Big].
\end{align*}

Since $\omega + i\partial\bar{\partial}u_\alpha = T^*\omega_{\text{sf},\varepsilon}(\alpha) > 0$ and $\omega$ itself is strictly positive, their convex combination $\omega + \psi i\partial\bar{\partial}u_\alpha$ is also strictly positive.

We first control the $dz \wedge d\bar{z}$ term. The derivatives of $\psi$ are controlled by its definition. As for $\tilde{u}_\alpha = u_1 + (\alpha - 1)(u - v)$, the $u_1$-term is independent of $\alpha$ and can be bounded by $C_1$. The contribution from $(\alpha - 1)(u - v)$ is controlled using Lemma~\ref{normalization}, leading to the condition
$$
t > C_1 + C_0|\alpha - 1| \sup_{\Delta(r+3s)\setminus\Delta(r)} u_{z\bar{z}}.
$$

For the remaining terms inside the brackets, they correspond to the matrix
$$
\begin{pmatrix}
0 & \psi_z u_{1,\bar{w_1}} & \psi_z u_{1,\bar{w_2}} \\
\psi_{\bar{z}} u_{1,w_1} & 0 & 0 \\
\psi_{\bar{z}} u_{1,w_2} & 0 & 0
\end{pmatrix}.
$$

The term $t\beta$ adds a $t$ to the top-left position of the matrix, while $\frac{1}{2}(\omega + \psi i\partial\bar{\partial}u_\alpha)$ contributes a positive function $\delta(z)$ to the remaining diagonal positions. To ensure the full matrix is positive definite, it suffices to ensure all its leading principal minors are positive. A computation shows that this holds provided we choose
\begin{equation}\label{control}
    t > \frac{\sup |\psi_z u_{1, \bar{\omega}_1}| + \sup |\psi_z u_{1, \bar{\omega}_2}|}{\inf \delta(z)}.
\end{equation}

In fact, $\inf \delta(z)$ admits a positive lower bound independent of $\alpha$, because $\omega + \psi i\partial\bar{\partial}u_\alpha + \beta$ is positive definite for all $\alpha \geq 0$: $\omega$ is positive definite, $\omega + i\partial\bar{\partial}(u_1 - u)$ is semi-positive, and $\beta$ completes it to a positive definite form via convex combination. In particular, the cofactors corresponding to $d\omega_1 \wedge d\bar{\omega}_1$ and $d\omega_2 \wedge d\bar{\omega}_2$ admit uniform positive lower bounds $K(z)$, achieved at $\alpha = 0$. A short calculation then shows that choosing $\delta(z) < \det(\omega + \psi i\partial\bar{\partial}u_0 + \beta)/4K(z)$ suffices. Therefore, the right-hand side of \eqref{control} has an upper bound independent of $\alpha$, which we may absorb into the constant $C_1$.
\end{proof}

Next, we choose suitable parameters $r, s, \alpha$, and $t$ to address the "integrability condition".

According to the above estimates, if we fix appropriate constants $r, s, C_0, C_1$, and define the function
$$
t(\alpha) = C_1 + C_0(\alpha - 1)\sup_{\Delta(r+3s)\setminus\Delta(r)} |u_{z\bar{z}}|,
$$
then $\omega_0(\alpha, t(\alpha))$ is positive definite for every $\alpha \geq 0$, and $I(\alpha, t(\alpha))$ is linear in $\alpha$.

When $\alpha = 0$, by definition we have
$$
I(0,t(0)) = \int_M \omega_0(0,C_1)^3.
$$
From the previous discussion, we know that $\omega_0(0,C_1)$ is semi-positive and strictly positive outside $\Delta(r+3s)$ (since it coincides with $\omega$), so $I(0,t(0)) > 0$. Therefore, as long as the derivative of $I(\alpha, t(\alpha))$ with respect to $\alpha$ is negative, we can find a suitable $\alpha$ satisfying the integrability condition.

Indeed, on $\Delta(r)^*$, we have $\omega_0(\alpha,t) = T^*\omega_{\mathrm{sf},\varepsilon}(\alpha)$, and since
$$
T^*\omega_{\mathrm{sf},\varepsilon}(\alpha)^3 = \alpha i\Omega \wedge \bar{\Omega},
$$
the corresponding terms in $I(\alpha, t)$ cancel out on this region. Thus, we only need to compute the derivative outside $\Delta(r)$. Moreover, outside $\Delta(r+3s)$, $\omega_0(\alpha, t)$ coincides with $\omega$, which is independent of $\alpha$. Using the definition of $\omega_0(\alpha, t(\alpha))$, we compute:

\begin{align*}
    I'(\alpha, t(\alpha)) &= \int_{\Delta(r+3s)\setminus \Delta(r)} (\omega + i\partial\bar{\partial}(\psi u_1))^2 \wedge \left[ C_0\sup|u_{z\bar{z}}|\beta + i\partial\bar{\partial}(\psi(u - v)) \right] \\
    &\quad - \int_{\mathbb{P}^1 \setminus \Delta(r)} i\Omega \wedge \bar{\Omega} \\
    &\leq \int_{\Delta(r+3s)\setminus \Delta(r)} (4\omega + 2i\partial\bar{\partial}u_1)^2 \wedge 2C_0\sup|u_{z\bar{z}}|dz \wedge d\bar{z} - \int_{\mathbb{P}^1 \setminus \Delta(r)} i\Omega \wedge \bar{\Omega}.
\end{align*}

Here, the inequality follows from similar estimates used in the proof of Proposition~\ref{positivity}: $\beta \leq dz \wedge d\bar{z}$, $i\partial\bar{\partial}(\psi(u - v)) \leq C_0\sup|u_{z\bar{z}}|dz \wedge d\bar{z}$, and
$$
\omega + i\partial\bar{\partial}(\psi u_1) \leq 2(\omega + \psi i\partial\bar{\partial}u_1 + C_1\beta)
\leq 4\omega + 2\psi i\partial\bar{\partial}u_1 + 2C_1\beta.
$$

Now observe that once $r$ is fixed sufficiently small, all terms in the above estimate for $I'(\alpha, t(\alpha))$, except for $C_0$ and the domain of integration, are independent of $s$. Moreover, from earlier estimates, $C_0$ is controlled by the condition $C_0 r < 1$ (possibly up to fixed multiplicative and additive constants). Therefore, by choosing $s$ sufficiently small, we can ensure that
$$
I'(\alpha, t(\alpha)) < 0,
$$
which implies that there exists a unique $\alpha_0 > 0$ such that $I(\alpha_0, t(\alpha_0)) = 0$. This completes our construction of the glued metric satisfying the integrability condition.

Finally, from the above discussion, we also see that if $\alpha > \alpha_0$, then $I(\alpha, t(\alpha)) < 0$, and we may further increase $t$ to obtain another set of admissible parameters.

To conclude, we have

\begin{theorem}

There exist a holomorphic section $\tilde{\sigma}$ over $\Delta^*$, concentric disks $\Delta^{\prime} \subset \Delta^{\prime \prime} \subset \Delta^{\prime \prime \prime} \subset$ $\Delta$, a $(1,1)$-form $\beta \geq 0$ on $\mathbb{P}^1$ such that $\operatorname{supp}(\beta) \subset \Delta^{\prime \prime \prime} \backslash \Delta^{\prime}$, and a constant $\alpha_0>0$. For all $\alpha>\alpha_0$, there exist functions $u_\alpha^{\mathrm{int}} \in C^{\infty}\left(f^{-1}\left(\mathbb{P}^1 \backslash \Delta^{\prime}\right), \mathbb{R}\right)$ and $u_\alpha^{\mathrm{ext }} \in C^{\infty}\left(f^{-1}\left(\Delta^{\prime \prime} \backslash\{0\}\right), \mathbb{R}\right)$ such that
\begin{itemize}

\item their complex Hessians coincide over $\Delta^{\prime \prime} \backslash \Delta^{\prime}$;

\item for all $t>t_\alpha$,  $\omega_0(\alpha, t)\coloneqq \omega+t f^* \beta+i \partial \bar{\partial} u_\alpha^{\mathrm{int}, \mathrm{ ext }}$ is a positive definite closed $(1,1)$-form on $X\backslash F$;

\item $\omega_0(\alpha, t)=\omega$ over $\mathbb{P}^1 \backslash \Delta^{\prime \prime \prime}$;

\item $\omega_0(\alpha, t)=T^* \omega_{\mathrm{sf}}(\alpha)$ over $\Delta^{\prime} \backslash\{0\}$, where $T$ denotes vertical translation by $\tilde{\sigma}$ relative to $\sigma$.

\end{itemize}
Furthermore, $\int_M\left(\omega_0(\alpha, t)^3-\alpha i\Omega \wedge \bar{\Omega}\right)=0$ holds for exactly one $t>t_\alpha$.

\end{theorem}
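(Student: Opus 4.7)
The plan is to combine the $\partial\bar\partial$-lemma with a two-parameter gluing argument whose parameters simultaneously control positivity and the integrability of the volume deficit. First I will invoke the $\partial\bar\partial$-lemma established above to produce the translation $T$ (and its holomorphic section $\tilde{\sigma}$) together with a smooth potential $u_1$ on $X|_{\Delta^*}$ satisfying $T^*\omega_{\mathrm{sf},\varepsilon}(1)-\omega = i\partial\bar\partial u_1$. Combining this with the explicit potential $u$ from \eqref{diff}, which encodes $\omega_{\mathrm{sf},\varepsilon}(\alpha)-\omega_{\mathrm{sf},\varepsilon}(1) = (\alpha-1)i\partial\bar\partial u$, yields $T^*\omega_{\mathrm{sf},\varepsilon}(\alpha) = \omega + i\partial\bar\partial u_\alpha$ on $\Delta^*$ with $u_\alpha := u_1 + (\alpha-1)u$.

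To build the glued form I choose nested concentric disks $\Delta^{\prime}=\Delta(r)\subset\Delta^{\prime\prime}=\Delta(r+s)\subset\Delta^{\prime\prime\prime}=\Delta(r+3s)$, a nonnegative base form $\beta$ on $\mathbb{P}^1$ supported in $\Delta^{\prime\prime\prime}\setminus\Delta^{\prime}$ and equal to $idz\wedge d\bar z$ on an intermediate sub-annulus, and a cutoff $\psi$ with $\psi\equiv 1$ on $\Delta^{\prime\prime}$ and $\mathrm{supp}\,\psi\subset\Delta(r+2s)$. On $\Delta^{\prime\prime\prime}\setminus\Delta^{\prime}$ I replace $u$ by the harmonic function $v$ with the same boundary values as $u$, and set $\tilde u_\alpha := u_1 + (\alpha-1)(u-v)$. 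Then I declare
\[
\omega_0(\alpha,t) := \omega + tf^{*}\beta + i\partial\bar\partial(\psi\tilde u_\alpha) \text{ outside } f^{-1}(\Delta^{\prime}), \qquad \omega_0(\alpha,t) := \omega + tf^{*}\beta + i\partial\bar\partial u_\alpha \text{ on } f^{-1}(\Delta^{\prime\prime}\setminus\{0\}).
\]
Harmonicity of $v$ together with $\psi\equiv 1$ on $\Delta^{\prime\prime}$ gives $i\partial\bar\partial(\psi\tilde u_\alpha) = i\partial\bar\partial u_\alpha$ on the overlap, so $\omega_0(\alpha,t)$ is a well-defined smooth closed $(1,1)$-form which coincides with $\omega$ outside $f^{-1}(\Delta^{\prime\prime\prime})$ and with $T^{*}\omega_{\mathrm{sf},\varepsilon}(\alpha)$ on $f^{-1}(\Delta^{\prime}\setminus\{0\})$, establishing three of the four bullet points.

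Positivity is then furnished by Proposition \ref{positivity}: choosing the threshold $t_\alpha := C_1 + C_0|\alpha-1|\sup_{\Delta^{\prime\prime\prime}\setminus\Delta^{\prime}}|u_{z\bar z}|$ and $t>t_\alpha$ ensures that the large base contribution $tf^{*}\beta$ dominates both the $dz\wedge d\bar z$ correction and the off-diagonal mixed terms produced by derivatives of $\psi$. The crucial input is Lemma \ref{normalization}, which bounds $|u-v|$ and $|\nabla(u-v)|$ on the relevant sub-annulus by $C_0 s^2\sup|u_{z\bar z}|$ and $C_0 s\sup|u_{z\bar z}|$ respectively, thereby keeping $C_0$ independent of $\alpha$.

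For the integrability condition I will analyse $I(\alpha,t):=\int_M(\omega_0(\alpha,t)^3 - \alpha i\Omega\wedge\bar\Omega)$. Because both the $\alpha$-dependent and the $t$-dependent pieces of $\omega_0$ are pullbacks of pure $dz\wedge d\bar z$ from the base, expanding the cube only produces terms linear in each parameter, so along the diagonal $t=t(\alpha)$ the function $\alpha\mapsto I(\alpha,t(\alpha))$ is affine. At $\alpha=0$ one has $I(0,t(0))>0$ since $\omega_0(0,C_1)$ is semi-positive with strictly positive cube on an open set and no volume form is subtracted. For the derivative, the contribution from $\Delta^{\prime}$ vanishes automatically because $T^{*}\omega_{\mathrm{sf},\varepsilon}(\alpha)^3 = \alpha i\Omega\wedge\bar\Omega$ there, leaving only the annular integral, which I bound by a fixed constant times the area of $\Delta^{\prime\prime\prime}\setminus\Delta^{\prime}$ minus the positive quantity $\int_{\mathbb{P}^1\setminus\Delta^{\prime}}i\Omega\wedge\bar\Omega$; shrinking $s$ after $r$ is fixed forces $I^{\prime}(\alpha,t(\alpha))<0$, yielding a unique $\alpha_0>0$ with $I(\alpha_0,t(\alpha_0))=0$. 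For any $\alpha>\alpha_0$, monotonicity of $I$ in $t$ then produces the required unique $t>t_\alpha$. The main obstacle throughout is decoupling $\alpha$ from the positivity threshold: without the harmonic replacement, the $\alpha$-dependence of $u_\alpha$ would force $C_0$ in $t_\alpha$ to depend on $\alpha$, destroying the affinity of $I(\alpha,t(\alpha))$; Lemma \ref{normalization} is precisely the analytic ingredient that rescues the argument.
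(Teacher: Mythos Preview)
Your proposal is correct and follows essentially the same approach as the paper: the same $\partial\bar\partial$-lemma, the same nested disks, the same harmonic replacement $v$, the same cutoff $\psi$ and two-piece definition of $\omega_0(\alpha,t)$, the same positivity argument via Proposition~\ref{positivity} and Lemma~\ref{normalization}, and the same affine analysis of $I(\alpha,t(\alpha))$ with the shrinking-$s$ trick to force $I'<0$. One small imprecision: in your final sentence you say the harmonic replacement is needed to keep $C_0$ independent of $\alpha$ and preserve affinity, but the $\alpha$-dependence of $u_\alpha$ is already explicitly linear regardless; the actual role of subtracting $v$ is that Lemma~\ref{normalization} makes the cutoff error terms scale like $\sup|u_{z\bar z}|$ \emph{independently of $s$}, so that shrinking $s$ drives the annular integral in $I'$ to zero while the constants $C_0,C_1$ stay bounded---this is what lets the negative $-\int_{\mathbb{P}^1\setminus\Delta'}i\Omega\wedge\bar\Omega$ term dominate.
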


Therefore, we could apply Tian-Yau-Hein's package (Theorem \ref{TYH package}) to get a complete noncompact Calabi-Yau metric on $X\backslash F$. The asymptotic behavior of $\omega_{\operatorname{CY}}$ resembles $\omega_{\operatorname{sf},\varepsilon}(\alpha)$ because of Theorem \ref{conv to const}. This concludes the proof of Theorem \ref{main 2}.

\section{Further discussion}\label{Further discussion}

We list some possible further problems in this direction.

\noindent \textbf{Completing isotrivial model:} It is important to note that in the case of elliptic fibrations, all ALG-type gravitational instantons can be derived from isotrivial models. For instance, one can consider the quotient $(\mathbb{P}^1_{[t:s]}\times E)/\mathbb{Z}_k$, where $k=2,3,4,6$, as described in Hein's thesis \cite{hein2010gravitational}. By performing Kodaira's canonical resolution on this quotient, an isotrivial rational elliptic surface is obtained, thereby enumerating all possible ALG gravitational instantons.

However, in the context of abelian surface fibrations, smooth relatively minimal models do not always exist. Even when such models do exist, as discussed in the final paragraph of Section \ref{isotrivial models}, finding an appropriate gluing metric remains challenging. Consequently, the model $(\mathbb{P}^1\times A)/\mathbb{Z}_k$ does not suffice for all cases of finite monodromy. This necessitates exploring alternative methods for constructing isotrivial models.


\noindent \textbf{More general construction:} Our second construction relies heavily on the fiber product structure. According to Kenji Ueno \cite{ueno1971fiber} and \cite{ueno1972fiber}, there are many abelian surface fibrations that are not fiber product of elliptic fibration. One possible method is to consider a genus two curve fibration over $\mathbb{P}^1$ and then take the Jacobian fibration which is automatically a polarized abelian surface fibration. However, for this general construction, we may encounter two difficulties. The first difficulty is to find a global model whose canonical bundle is negative multiple of a fiber. The second difficulty arises from gluing procedure.

\noindent \textbf{Relation with the Tian-Yau metric:} Recall that the volume growth of the Tian-Yau metric in complex dimension three is of $\frac32$ order, which is the same as that in our construction of the $\mathrm{I}^*_{b_1}\times \mathrm{I}^*_{b_2}$ case. It's natural to ask whether these two constructions are related. We also remark that the $\mathrm{ALH}^*$ gravitational instanton (corresponding to the $\mathrm{I}_b$-type singular fiber) is equivalent to the Tian-Yau metric as discussed in \cite{hein2021asymptotically}.

\appendix

\section{Tian-Yau-Hein's package}\label{Tian-Yau-Hein's package}

For readers' convenience, we recall the Tian-Yau-Hein's package in this appendix.

\begin{definition}[$\operatorname{SOB}(\beta)$]\label{SOB}

Let $(M,g)$ be a complete non-compact Riemannian manifold with real dimension at least $3$. Let $\beta\in \mathbb{R}^+$. We say $(M,g)$ satisfies $\operatorname{SOB}(\beta)$ condition if there exists a fixed $x_0\in M$ and a positive constant $C$ such that

(1) $\operatorname{Vol}_g(B(x_0,R))\leq CR^\beta$, $\forall R>C$;

(2) $\operatorname{Vol}_g\left(B\left(x,\left(1-\frac 1C\right)d_g(x,x_0)\right)\right)\geq \frac1C d_g(x,x_0)^\beta$;

(3) $\operatorname{Ric}(x)\geq -Cd_g(x,x_0)^{-2}$, $\forall x\in M$;

(4) For any $D>C$, any two points $x,y\in M$ with $d_g(x_0,x)=d_g(x_0,y)=D$ can be joined by a curve in the annulus $A\left(x_0,\frac1C D,CD\right)=\left\{x\in M:\frac1C D<d_g(x_0,x)<CD\right\}$

\end{definition}

\begin{definition}[$C^{k,\alpha}$-atlas]

Let $\left(M, \omega_0\right)$ be a complete Kähler manifold. A $C^{k, \alpha}$ quasi-atlas for $\left(M, \omega_0\right)$ is a collection $\left\{\Phi_x: x \in A\right\}, A \subset M$, of holomorphic local diffeomorphisms $\Phi_x: B \rightarrow M, \Phi_x(0)=x$, from $B=B(0,1) \subset \mathbb{C}^m$ into $M$ which extend smoothly to the closure $\bar{B}$, and such that there exists $C \geq 1$ with $\operatorname{inj}\left(\Phi_x^* g_0\right) \geq \frac{1}{C}, \frac{1}{C} g_{\mathbb{C}^m} \leq \Phi_x^* g_0 \leq C g_{\mathbb{C}^m}$, and $\left\|\Phi_x^* g_0\right\|_{C^{k, \alpha}\left(B, g_{\mathbb{C}^m}\right)} \leq C$ for all $x \in A$, and such that for all $y \in M$ there exists $x \in A$ with $y \in \Phi_x(B)$ and $\operatorname{dist}_{g_0}\left(y, \partial \Phi_x(B)\right) \geq \frac{1}{C}$.

\end{definition}

\begin{lemma}[Tian-Yau \cite{tian1990complete}, Proposition 1.2]\label{QuasiAtlas}

If $|\mathrm{Rm}| \leq C$, then there exists a quasi-atlas which is $C^{1, \alpha}$ for every $\alpha$. If moreover $\sum_{i=1}^k \mid \nabla^i$ Scal $\mid \leq C$, then this quasi-atlas is even $C^{k+1, \alpha}$.

\end{lemma}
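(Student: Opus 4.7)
The plan is to construct each chart $\Phi_x : B(0,1) \subset \mathbb{C}^m \to M$ in two stages: first use the Riemannian exponential map to produce a real-smooth chart with controlled metric, then holomorphically correct the coordinates by solving a $\bar\partial$-equation. Fix a small radius $r_0 = r_0(C) > 0$. For each $x \in M$, the bounded-curvature hypothesis $|\mathrm{Rm}| \le C$ together with Rauch comparison guarantees that $\exp_x : B_{T_x M}(0, r_0) \to M$ is a local diffeomorphism with uniformly equivalent metric, $\tfrac{1}{2} g_{\mathrm{flat}} \le \exp_x^* g_0 \le 2\, g_{\mathrm{flat}}$ on $B(0, r_0)$, and harmonic-coordinate regularity (Jost-Karcher) yields uniform $C^{1,\alpha}$ control on $\exp_x^* g_0$ for every $\alpha\in(0,1)$. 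Identifying $T_x M$ with $\mathbb{C}^m$ via the complex structure $J_x$ gives linear coordinates $w^1, \ldots, w^m$ which are holomorphic at the origin but not on the whole ball.

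To upgrade to genuine holomorphic coordinates, I would observe that the $(0,1)$-forms $\bar\partial w^i$ vanish at the origin and their $C^\alpha$ norms on $B(0, r_0/2)$ are bounded in terms of $\|\exp_x^* g_0\|_{C^{1,\alpha}}$, hence uniformly in $x$. I would then solve $\bar\partial u^i = \bar\partial w^i$ on $B(0, r_0/2)$ using H\"ormander's $L^2$ estimate with a plurisubharmonic weight (or, equivalently, integrate against the Bochner-Martinelli kernel), producing $u^i$ whose $C^{1,\alpha}$ norm is small provided $r_0$ is chosen small compared to $C$. Setting $z^i := w^i - u^i$ yields holomorphic functions with $z^i(0) = 0$ and $dz^i(0)$ spanning $T_x^{*1,0} M$; these define a holomorphic local diffeomorphism on a slightly smaller ball. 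Rescaling so the domain is the unit ball gives $\Phi_x$, and the pulled-back metric inherits uniform $C^{1,\alpha}$ control from the preceding estimates.

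For the higher-regularity claim, I would bootstrap inside these holomorphic charts via the K\"ahler identity $R_{i\bar j} = -\partial_i \partial_{\bar j} \log \det(g_{k\bar l})$. In the Calabi-Yau application $R_{i\bar j} \equiv 0$, so $\log \det g$ is pluriharmonic and standard elliptic regularity promotes the $C^{1,\alpha}$ bound on $g$ to smooth bounds with all constants depending only on $C$. More generally, a bound on $\sum_{i=1}^{k} |\nabla^i \operatorname{Scal}|$ controls $\partial_i \partial_{\bar j} \log \det g$ in $C^{k,\alpha}$ after using the Bianchi identity to pass from scalar to Ricci derivatives in the K\"ahler setting, and iterated Schauder theory yields $C^{k+1,\alpha}$ bounds on $g_{i\bar j}$. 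Taking $A = M$ and noting that all constants depend only on $C$ produces a quasi-atlas satisfying the definition.

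The main obstacle is the quantitative holomorphic correction: one must show that the solution $u^i$ of $\bar\partial u^i = \bar\partial w^i$ has $|du^i|(0) < 1$, so that $z^i = w^i - u^i$ remains a local diffeomorphism at the origin, and that all the resulting constants depend only on $C$ and not on $x$. This reduces to bounding the $C^{1,\alpha}$ norm of $u^i$ in terms of $C$ and a sufficiently small $r_0$, which requires a careful choice of weight in H\"ormander's estimate and exploitation of the fact that $\bar\partial w^i$ vanishes to first order at the origin; once this step is in place, the covering, scaling, and regularity arguments are essentially routine applications of standard elliptic theory.
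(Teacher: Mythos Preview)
The paper does not supply its own proof of this lemma: it is stated in the appendix as a citation to Tian--Yau \cite{tian1990complete}, Proposition~1.2, and no argument is given. So there is nothing in the paper to compare your proposal against.

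That said, your sketch is essentially the standard Tian--Yau argument: Rauch comparison and harmonic-coordinate regularity to get $C^{1,\alpha}$ control on $\exp_x^* g$, then H\"ormander's $L^2$ method to solve $\bar\partial u^i = \bar\partial w^i$ and correct the almost-holomorphic normal coordinates to genuinely holomorphic ones, followed by Schauder bootstrapping using the K\"ahler--Ricci identity. The one place where your outline is slightly loose is the higher-order step: passing from bounds on $\nabla^i\operatorname{Scal}$ to $C^{k,\alpha}$ bounds on $R_{i\bar j}$ is not a pure Bianchi-identity consequence; rather, one treats $\log\det g$ as the unknown in the elliptic equation $\Delta_g(\log\det g) = -\operatorname{Scal}$, bootstraps $\log\det g$ in $C^{k+2,\alpha}$, and then feeds $R_{i\bar j} = -\partial_i\partial_{\bar j}\log\det g$ back into the Schauder iteration for $g_{i\bar j}$. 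With that clarification the argument goes through, and the obstacle you flag (smallness of $du^i$ so that $z^i = w^i - u^i$ remains a diffeomorphism) is handled exactly as you say, by choosing $r_0$ small depending only on $C$.
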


\begin{theorem}[Tian-Yau-Hein's Package\cite{hein2010gravitational}, Proposition 6.1]\label{TYH package}

Let $(M,\omega_0)$ be a complete noncompact K{\"a}hler manifold with a $C^{3,\alpha}$ quasi-atlas which satisfies $\mathrm{SOB}(\beta)$ for some $\beta>0$. 

Let $f \in C^{2, \alpha}(M)$ satisfy $|f| \leq C r^{-\mu}$ on $\{r>1\}$ for some $\mu>2$. If $\beta \leq 2$, then assume in addition that $\int_M\left(e^f-1\right) \omega_0^m=0$. Then there exist $\bar{\alpha} \in(0, \alpha]$ and $u \in C^{4, \bar{\alpha}}(M)$ such that $\left(\omega_0+i \partial \bar{\partial} u\right)^m=e^f \omega_0^m$. If $\beta \leq 2$, then moreover $\int_M|\nabla u|^2 \omega_0^m<\infty$. Independent of the value of $\beta$, if in addition $f \in C_{\mathrm{loc}}^{k, \bar{\alpha}}(M)$ for some $k \geq 3$, then all such solutions $u$ belong to $C_{\mathrm{loc}}^{k+2, \bar{\alpha}}(M)$.

\end{theorem}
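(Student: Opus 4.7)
The plan is to solve the complex Monge-Amp\`ere equation $(\omega_0+i\partial\bar\partial u)^m=e^f\omega_0^m$ by Yau's continuity method, adapted to the noncompact setting via weighted function spaces. I would consider the one-parameter family $(\omega_0+i\partial\bar\partial u_t)^m = e^{tf}\omega_0^m$ for $t\in[0,1]$, starting from the trivial solution $u_0=0$, and show that the set of $t$ for which a solution exists in a suitable weighted H\"older space is nonempty, open, and closed in $[0,1]$.

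\textbf{Setting up function spaces and openness.} Using the $C^{3,\alpha}$ quasi-atlas $\{\Phi_x\}$, I would introduce weighted H\"older spaces $C^{k,\alpha}_\delta(M)$ in which $u$ decays like $r(x)^{-\delta}$ along with its derivatives, measured chart-by-chart against $\Phi_x^*g_0$. The SOB($\beta$) hypothesis provides uniform chart-wise elliptic constants and a Ricci lower bound $\operatorname{Ric}(\omega_0)\geq -Cr^{-2}$, which together make the Laplacian $\Delta_{\omega_0}$ into a Fredholm operator between appropriate weighted spaces for generic indicial weights $\delta$. Openness along the continuity path then follows from the implicit function theorem applied to the Monge-Amp\`ere operator $u\mapsto\log((\omega_0+i\partial\bar\partial u)^m/\omega_0^m)-tf$: the linearization at a solution is a Laplacian-type operator whose invertibility on the weighted spaces is ensured for $\delta$ chosen away from indicial exceptional values.

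\textbf{Closedness via a priori estimates.} The three-step program of Yau is needed. First, a zeroth-order estimate $\|u_t\|_{L^\infty(M)}\leq C$ uniform in $t$, obtained by Moser iteration on geodesic balls; the SOB($\beta$) volume growth, together with the Bishop-Gromov-type control and the connectedness of large annuli (property (4) of SOB), provides the Sobolev constants. Second, the Laplacian estimate $\operatorname{tr}_{\omega_0}(\omega_0+i\partial\bar\partial u_t)\leq C$ is obtained by running Yau's maximum principle computation with a suitable exhaustion function as cutoff, absorbing the Ricci error into the negative term using the $r^{-2}$ decay. Third, the Evans-Krylov theorem on each chart of the quasi-atlas upgrades this to a uniform $C^{2,\bar\alpha}$ bound, after which Schauder theory and the quasilinear structure bootstrap to $C^{4,\bar\alpha}$ and, assuming higher regularity of $f$, to $C^{k+2,\bar\alpha}_{\mathrm{loc}}$.

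\textbf{The main obstacle.} The hard part is the $C^0$ estimate in the subcritical volume growth regime $\beta\leq 2$, where the standard Moser iteration on $M$ fails because the global Sobolev inequality degenerates and the naive argument produces a $t$-dependent blow-up. Here the hypothesis $\int_M(e^f-1)\omega_0^m=0$ becomes essential: integrating the equation gives $\int_M|\nabla u_t|^2_{\omega_0}\omega_0^m<\infty$, and one must then exploit weighted Poincar\'e and Nash-type inequalities on annuli $A(x_0,R,2R)$, with constants controlled by the SOB($\beta$) data, to pass from an energy bound to a uniform $L^\infty$ bound. This is the technical heart of Hein's extension beyond Tian-Yau's original Fano setting, and I expect that obtaining these weighted functional inequalities with constants independent of the annular radius, together with a De Giorgi-Nash iteration tailored to the quasi-atlas, will consume the bulk of the effort.
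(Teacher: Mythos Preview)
The paper does not prove this theorem. It is stated in the appendix with an explicit citation to Hein's thesis (\cite{hein2010gravitational}, Proposition 6.1), and the appendix is introduced with the sentence ``For readers' convenience, we recall the Tian-Yau-Hein's package in this appendix.'' The result is used as a black box in Sections~\ref{isotrivial models} and~\ref{perturbation}; no argument for it is supplied anywhere in the paper.

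Your outline is a reasonable high-level sketch of the strategy that actually appears in Hein's thesis --- continuity method, weighted function spaces built from the quasi-atlas, a priori $C^0$/Laplacian/Evans--Krylov estimates, with the integrability condition $\int_M(e^f-1)\omega_0^m=0$ entering in the $\beta\leq 2$ regime to salvage the $C^0$ estimate via energy control --- but since the paper itself offers no proof, there is nothing here to compare your proposal against. If the task is to supply what the paper omits, you would need to go to the cited source rather than to this paper.
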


\begin{definition}[$\operatorname{HMG}(\lambda, k, \alpha)$]

Let $(M,g)$ be a complete noncompact Riemannian manifold with real dimension at least $3$. $\left(M^n, g\right)$ is called $\operatorname{HMG}(\lambda, k, \alpha)$, $\lambda \in[0,1]$, $k \in \mathbb{N}_0$, $\alpha \in(0,1)$, if there exist $x_0 \in M$ and $C \geq 1$ such that

(1) for every $x \in M$ with $r(x) \geq C$, there exists a local diffeomorphism $\Phi_x$ from the unit ball $B \subset \mathbb{R}^n$ into $M$ such that $\Phi_x(0)=x$ and $\Phi_x(B) \supset B\left(x, \frac{1}{C} r(x)^\lambda\right)$;

(2) $h:=r(x)^{-2 \lambda} \Phi_x^* g$ satisfies $\operatorname{inj}(h) \geq \frac{1}{C}, \frac{1}{C} g_{\text {euc }} \leq h \leq C g_{\text {euc }}$, and $\left\|h-g_{\text {euc }}\right\|_{C^{k, \alpha}\left(B, g_{\mathrm{euc}}\right)} \leq C$.

\end{definition}

On a complete HMG Riemannian manifold, we can define the weighted H\"{o}lder norm. For any smooth weight function 
$\varphi:\left[0,\infty\right)\to\left[1,\infty\right)$, define: 
\begin{equation*}
    \|u\|_{\varphi, l, \gamma}:=\|u\|_{C^{l,\gamma}(B(x_0,2C),g)}+ \sup\{\varphi(r(x))\|u\circ\Phi_x\|_{C^{l,\gamma}(B,g_{\mathrm{euc}}): r(x)>C}\}
\end{equation*}
where $B$ and $\Phi_x$ are the same as in the definition of HMG manifolds.

\begin{lemma}[\cite{hein2010gravitational}, Lemma 4.6]

A complete Kähler manifold with $|\mathrm{Rm}|+\sum_{i=1}^k r^{i \lambda}\left|\nabla^i \mathrm{Scal}\right| \leq C r^{-2 \lambda}$ for some $k \in \mathbb{N}_0$ and $\lambda \in[0,1]$ is $\operatorname{HMG}(\lambda, k+1, \alpha)$ for every $\alpha \in(0,1)$.

\end{lemma}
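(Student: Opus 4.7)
The plan is to rescale the metric at each distant point $x$ by $r(x)^{-2\lambda}$ so that the decay hypotheses turn into uniform bounds on a ball of fixed size, and then bootstrap local regularity using the Kähler identity that relates scalar curvature to $\log\det g$. Concretely, fix $x$ with $r(x)\ge C$ and set $\mu = r(x)^\lambda$. Because $\lambda \le 1$, every $y \in B_g(x,2\mu)$ satisfies $r(y) \sim r(x)$ once $r(x)$ is large; under the rescaled metric $\tilde g := \mu^{-2} g$, the hypothesis becomes the uniform bound $|\operatorname{Rm}_{\tilde g}| + \sum_{i=1}^{k}|\tilde\nabla^{i}\operatorname{Scal}_{\tilde g}|_{\tilde g} \le C'$ on $B_{\tilde g}(x,2)$. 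It therefore suffices to produce, from these uniform bounds alone, a local diffeomorphism $\Phi_x: B \to M$ with $\Phi_x(0)=x$, $\Phi_x(B) \supset B_{\tilde g}(x,1/C)$, such that $\Phi_x^*\tilde g$ is uniformly $C^{k+1,\alpha}$-close to $g_{\mathrm{euc}}$ with an injectivity-radius lower bound.

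The sectional curvature bound $|K_{\tilde g}| \le C'$ gives a conjugate radius $\ge \pi/\sqrt{C'}$, so $\Phi_x := \exp_x \circ L_x$, for a linear isometry $L_x:\mathbb{R}^{2m}\to T_x M$, is a local diffeomorphism on a Euclidean ball of definite size; Jacobi-field comparison then yields $C^{-1}g_{\mathrm{euc}} \le \Phi_x^*\tilde g \le C g_{\mathrm{euc}}$ together with a $\Phi_x^*\tilde g$-injectivity bound. Passing to harmonic coordinates of comparable size via Jost--Karcher, the metric satisfies the quasilinear elliptic equation $\Delta \tilde g_{ab} = -2\operatorname{Ric}_{ab} + Q(\tilde g, \partial \tilde g)$ with $Q$ quadratic in $\partial \tilde g$; since $|\operatorname{Ric}_{\tilde g}| \le |\operatorname{Rm}_{\tilde g}|$ is bounded, Schauder theory gives $\|\tilde g\|_{C^{1,\alpha}} \le C$, settling the case $k=0$.

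For $k\ge 1$ the bootstrap exploits the Kähler identity $\operatorname{Ric}_{i\bar j} = -\partial_i \partial_{\bar j} \phi$ with $\phi := \log\det(\tilde g_{i\bar j})$, whose $\tilde g$-trace is the linear elliptic equation $\tilde g^{i\bar j}\partial_i \partial_{\bar j} \phi = -\tfrac12 R$ in any holomorphic chart. Using the $C^{1,\alpha}$ bound on $\tilde g$, I construct holomorphic coordinates of definite size at $x$ (by solving $\bar\partial v = -\bar\partial z_{\mathrm{lin}}$ with $C^{1,\alpha}$ estimates). Then iterate: assuming $\tilde g \in C^{l+1,\alpha}$ for some $0 \le l \le k-1$, elliptic regularity applied to the $\phi$-equation together with the $C^{l,\alpha}$ bound on $R$ promotes $\phi$ to $C^{l+2,\alpha}$; by $\operatorname{Ric}_{i\bar j} = -\partial_i\partial_{\bar j}\phi$ this yields $\operatorname{Ric} \in C^{l,\alpha}$, and the harmonic-coordinate equation then upgrades $\tilde g$ to $C^{l+2,\alpha}$. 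After $k$ iterations, $\tilde g \in C^{k+1,\alpha}$, as required. The main obstacle is bookkeeping: at each stage one must maintain quantitative control of the change-of-coordinates between the holomorphic and harmonic charts in $C^{l+1,\alpha}$, and one must translate the hypothesis on \emph{covariant} derivatives of $\operatorname{Scal}$ into Hölder bounds on its coordinate expression using the $C^{l,\alpha}$ control of $\tilde g$ just obtained. Both are standard bootstrap steps but need to be executed with care; the conceptual point is that in Kähler geometry scalar curvature alone determines $\log\det g$ via an elliptic equation, allowing one-derivative-at-a-time promotion of $\operatorname{Scal}$-bounds to full metric bounds.
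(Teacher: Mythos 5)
The paper does not prove this lemma at all---it is quoted directly from Hein's thesis---and your argument is essentially the same as the cited one: rescale by $r(x)^{-2\lambda}$ so the hypotheses become uniform bounds at unit scale, produce $C^{1,\alpha}$ charts of definite size from the curvature bound via the exponential map and Jost--Karcher harmonic coordinates, and then bootstrap to $C^{k+1,\alpha}$ using the K\"ahler identity $\operatorname{Ric}_{i\bar j}=-\partial_i\partial_{\bar j}\log\det g$ together with the bounds on $\nabla^i\operatorname{Scal}$ (this is Tian--Yau's Proposition 1.2, i.e.\ Lemma \ref{QuasiAtlas}, localized at scale $r(x)^{\lambda}$). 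The only slip is quantitative: when $\lambda=1$ points of $B_g(x,2r(x)^{\lambda})$ need not satisfy $r(y)\sim r(x)$, so one should instead work on $B_g(x,\delta r(x)^{\lambda})$ for a small fixed $\delta$, which still covers the ball $B\left(x,\frac{1}{C}r(x)^{\lambda}\right)$ demanded by the definition of $\operatorname{HMG}(\lambda,k+1,\alpha)$.
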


\begin{theorem}[Convergence to constant\cite{hein2010gravitational}, Proposition 4.8]\label{conv to const}

Let $\left(M^m, \omega_0\right)$ be a complete Kähler manifold and let $u, f \in C^{\infty}(M)$ be such that $\sup \left|\nabla^i u\right|+\sup \left|\nabla^i f\right|<\infty$ for all $i \in \mathbb{N}$, and $\left(\omega_0+i \partial \bar{\partial} u\right)^m=e^f \omega_0^m$. 

(1) Assume $M$ is $\operatorname{SOB}(\beta)$ with $\beta \in(0,2]$, and $r(x)^\kappa|B(x, 1)| \rightarrow \infty$ as $r(x) \rightarrow \infty$ for every fixed $\kappa>0$. If $\int_M|\nabla u|^2 \omega_0^m<\infty$ and $|f| \leq C r^{-\beta-\varepsilon}$ for some $\varepsilon>0$, then $$\sup _{B(x, 1)}\left|u-u_{B(x, 1)}\right| \leq C r(x)^{-\delta}$$ for some $\delta>0$ and all $x \in M$.

(2) Assume $M$ is $\operatorname{HMG}(\lambda, k+1, \alpha)$ and that $M$ has at most linear diameter growth in the sense that for all $s \geq C$ and $x_1, x_2 \in M$ with $r\left(x_1\right)=r\left(x_2\right)=s$, there exists a path $\gamma: x_1 \rightarrow x_2$ with length $(\gamma) \leq C s$ and $s-\frac{1}{C} s^\lambda \leq r \circ \gamma \leq C$ s.

Let $\varphi:[0,\infty)\to [1,\infty)$ be a smooth weight function such that $\varphi(t)(1+t)^{-\delta}$ is non-decreasing for some $\delta>0$, and $\varphi\left(t-\frac{1}{2} t^\lambda\right) \geq \frac{1}{C} \varphi(t)$. If $\sup _x \varphi(r(x)) r(x)^{1-\lambda} \sup _{B(x, 1)}\left|u-u_{B(x, 1)}\right|<\infty$ and $\|f\|_{\psi, k, \alpha}<\infty$ with $\psi(t):=(1+t)^{1+\lambda} \varphi(t)$, then $$\|u-\bar{u}\|_{\varphi, k+2, \alpha}<\infty$$ for some constant $\bar{u} \in \mathbb{R}$.

\end{theorem}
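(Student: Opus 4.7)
The plan is to linearize the complex Monge-Amp\`ere equation $(\omega_0 + i\partial\bar\partial u)^m = e^f \omega_0^m$ into a uniformly elliptic divergence-form PDE
$$i\partial\bar\partial u \wedge \Theta = (e^f - 1)\omega_0^m, \qquad \Theta := \sum_{j=0}^{m-1}(\omega_0 + i\partial\bar\partial u)^j \wedge \omega_0^{m-1-j},$$
and then apply analytic tools tailored to the non-compact weighted geometry. Because $u$ has bounded derivatives of every order, the real $(m-1,m-1)$-form $\Theta$ is positive with $C^\infty$-bounded coefficients uniformly comparable to $\omega_0^{m-1}$, so the resulting operator $L$ has uniformly elliptic smooth coefficients with respect to $\omega_0$. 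Testing against cutoff functions yields the Caccioppoli-type estimates that drive both parts.

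\textbf{Part (1): oscillation decay.} The goal is the pointwise bound $\sup_{B(x,1)}|u - u_{B(x,1)}| \leq C r(x)^{-\delta}$. Starting from the finite Dirichlet energy $\int_M |\nabla u|^2\,\omega_0^m < \infty$, I would apply Caccioppoli estimates for $L$ on annular regions $A(x_0; R, 2R)$ to obtain energy decay $\int_{r>R}|\nabla u|^2\,\omega_0^m \to 0$ with explicit polynomial rate, controlled by the inhomogeneous term via $|f|\leq C r^{-\beta-\varepsilon}$. The $C^{3,\alpha}$ quasi-atlas together with the hypothesis $\mathrm{SOB}(\beta)$ and the volume non-degeneracy $r^\kappa|B(x,1)| \to \infty$ furnish a uniform local Sobolev--Poincar\'e inequality on each unit ball. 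A Moser-type iteration applied to $u - u_{B(x,1)}$ in these charts then upgrades the $L^2$ energy decay into the desired pointwise oscillation decay for some $\delta > 0$; the connectedness of annuli in Definition \ref{SOB} lets one compare the fiberwise constants across different radii.

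\textbf{Part (2): weighted higher-order estimates and identification of the limit.} On a HMG chart $\Phi_x$, the rescaled metric $r(x)^{-2\lambda}\Phi_x^* g$ is $C^{k,\alpha}$-close to Euclidean, so classical interior Schauder estimates applied to the pulled-back linearized equation, rescaled back, produce a bound of the schematic form
$$r(x)^{-(1-\lambda)}\,\|u - u_{B(x,1)}\|_{C^{k+2,\alpha}(B(x,1))} \leq C\bigl( \operatorname{osc}_{B(x,1)}(u) + r(x)^{1+\lambda}\|f\|_{C^{k,\alpha}(B(x,1))}\bigr).$$
Combined with the hypothesized weighted bounds on the oscillation and on $f$ (the weight $\psi = (1+t)^{1+\lambda}\varphi$ is precisely designed to absorb the rescaling factor), this yields $\|u - u_{B(x,1)}\|_{\varphi, k+2, \alpha} < \infty$. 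To replace the moving averages by a single $\bar u \in \mathbb{R}$, I would use the linear diameter growth hypothesis: any two points at radius $s$ can be joined by a curve of length $\leq Cs$ staying in a controlled annulus; summing oscillations along such a curve and along a radial path, and using the monotonicity of $\varphi(t)(1+t)^{-\delta}$, shows that $\{u_{B(x,1)}\}$ is Cauchy as $r(x)\to\infty$ and thus converges to some $\bar u$.

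\textbf{Main obstacle.} The hardest step is the Moser iteration of part (1) in the borderline regime $\beta \leq 2$, where the slow volume growth makes the required Sobolev inequality delicate and only the SOB structural assumptions (interior ball property, Ricci lower bound, and annular connectedness) make the iteration go through. Matching the algebraic decay of $f$ to the final oscillation exponent $\delta$ requires carefully tracking how each iteration loses a small fraction of the decay rate, and ensuring that the cumulative loss still leaves a strictly positive $\delta$ at the end.
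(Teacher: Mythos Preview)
The paper does not prove this theorem at all. It is stated in Appendix~\ref{Tian-Yau-Hein's package} purely as a quotation of Proposition~4.8 from Hein's thesis \cite{hein2010gravitational}, alongside the other items of the Tian--Yau--Hein package, and is invoked as a black box at the end of Section~\ref{perturbation} to deduce the asymptotic behavior of $\omega_{\mathrm{CY}}$. There is therefore no ``paper's own proof'' to compare your proposal against.

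That said, your outline is a reasonable sketch of the argument as it actually appears in Hein's thesis: the linearization via $\Theta$, the Caccioppoli-to-Moser passage on annuli for part~(1), and the local Schauder estimates in rescaled HMG charts followed by a telescoping-of-averages argument for part~(2) are indeed the main ingredients there. One point to be careful about in part~(1): the passage from $\int_M |\nabla u|^2 < \infty$ to a polynomial decay rate of the energy on $\{r > R\}$ is not automatic and requires a weighted Poincar\'e-type inequality on annuli that uses the SOB hypotheses in an essential way; your sketch acknowledges this as the main obstacle but does not indicate how the iteration actually closes. Similarly, in part~(2) the replacement of $u_{B(x,1)}$ by a single constant $\bar u$ needs the assumption that $\varphi(t)(1+t)^{-\delta}$ is nondecreasing to make the telescoping sum converge, which you correctly invoke.
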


\bibliographystyle{plain}
\bibliography{reference}


\end{document}